\documentclass[12pt,reqno]{amsart}
\usepackage[margin=1in]{geometry}
\usepackage{amsmath,amssymb,amsthm,graphicx,amsxtra, setspace}
\usepackage[utf8]{inputenc}
\usepackage{mathrsfs}
\usepackage{hyperref}
\usepackage{upgreek}
\usepackage{mathtools}
\usepackage{xcolor}
\allowdisplaybreaks

\usepackage[pagewise]{lineno}

% THEOREM Environments ---------------------------------------------------
%\setlength{\textheight}{19.5 cm} \setlength{\textwi\d th}{12.5 cm}
\newtheorem{theorem}{Theorem}[section]
\newtheorem{lemma}[theorem]{Lemma}
\newtheorem{proposition}[theorem]{Proposition}

\newtheorem{definition}[theorem]{Definition}
\newtheorem{example}[theorem]{Example}

\newtheorem{hypothesis}[theorem]{Hypothesis}

\let\originalleft\left
\let\originalright\right
\renewcommand{\left}{\mathopen{}\mathclose\bgroup\originalleft}
\renewcommand{\right}{\aftergroup\egroup\originalright}

\let\emptyset\varnothing

% MATH -----------------------------------------------------------
\newcommand{\Tr}{\mathop{\mathrm{Tr}}}
\renewcommand{\d}{\/\mathrm{d}\/}

\def\w{\textbf{W}^{\varepsilon}_{{\theta}^{\varepsilon}}}

\def\e{\varepsilon}

\def\L{\mathbb{L}}
\def\A{\mathrm{A}}

\def\F{\mathrm{F}}
\def\C{\mathrm{C}}
\def\U{\mathrm{U}}
\def\f{\mathbf{f}}

\def\B{\mathrm{B}}
\def\D{\mathrm{D}}
\def\y{\mathbf{y}}

\def\Y{\mathbb{Y}}

\def\E{\mathbb{E}}
\def\X{\mathbb{X}}
\def\x{\mathbf{x}}

\def\g{\mathbf{g}}

\def\z{\mathbf{z}}
\def\v{\mathbf{v}}
\def\V{\mathbb{v}}
\def\w{\mathbf{w}}
\def\W{\mathrm{W}}
\def\G{\mathrm{G}}

\def\M{\mathrm{M}}
\def\N{\mathbb{N}}

\def\V{\mathbb{V}}
\def\wi{\widetilde}

\def\u{\mathrm{U}}

\def\u{\mathbf{u}}
\def\H{\mathbb{H}}

\newcommand{\R}{\mathbb{R}}

\renewcommand{\d}{\/\mathrm{d}\/}

% ----------------------------------------------------------------
\newcommand{\Addresses}{{% additional braces for segregating \footnotesize
		\footnote{
			%	\footnotesize
			\noindent \textsuperscript{1,2}Department of Mathematics, Indian Institute of Technology Roorkee-IIT Roorkee,
			Haridwar Highway, Roorkee, Uttarakhand 247667, INDIA.\par\nopagebreak
			\noindent  \textit{e-mail:} \texttt{Manil T. Mohan: maniltmohan@ma.iitr.ac.in, maniltmohan@gmail.com.}
			
			\textit{e-mail:} \texttt{Kush Kinra: kkinra@ma.iitr.ac.in.}
			
			\noindent \textsuperscript{*}Corresponding author.
			
			\textit{Key words:} Weak pullback mean randon attractor, stochastic convective Brinkman-Forchheimer equations, nonlinear diffusion, Bochner spaces, locally monotone stochastic partial differential equations.
			
			Mathematics Subject Classification (2020): Primary 37L55; Secondary 35B41, 35Q35, 37N10, 35R60.

}}}

\begin{document}
	%	\linenumbers
	
	\title[Weak pullback mean random attractors]{Weak pullback mean random attractors for the stochastic convective Brinkman-Forchheimer equations and locally monotone stochastic partial differential equations
		\Addresses}
	
	\author[K. Kinra and M. T. Mohan]
	{Kush Kinra\textsuperscript{1} and Manil T. Mohan\textsuperscript{2*}}

	\maketitle
	
	\begin{abstract}
		This work is concerned about the asymptotic behavior of the solutions of the two and three dimensional stochastic convective Brinkman-Forchheimer (SCBF) equations$$\d\u-\left[\mu \Delta\u-(\u\cdot\nabla)\u-\alpha\u-\beta|\u|^{r-1}\u-\nabla p\right]\d t=\mathbf{f}\d t+\varepsilon\upsigma(\cdot,\u)\d\W,\ \nabla\cdot\u=0,$$ driven by white noise with nonlinear diffusion terms (for some $\e>0$). We prove the existence and uniqueness of weak pullback mean random attractors for the 2D SCBF equations (for $r\geq1$) as well as 3D SCBF equations (for $r>3$, any $\mu,\beta>0$ and for $r=3$, $2\mu\beta\geq1$) in Bochner spaces, when the diffusion terms are Lipschitz nonlinear functions. Furthermore, we establish the existence of weak pullback mean random attractors for a class of locally monotone stochastic partial differential equations. 
	\end{abstract}

	\section{Introduction} \label{sec1}\setcounter{equation}{0}
	In this work, our main focus is to study the long time behavior (to establish the existence and uniqueness of weak pullback mean random attractors) of the stochastic convective Brinkman-Forchheimer (SCBF) equations as well as a class of locally monotone stochastic partial differential equations (PDEs) driven by white noise with nonlinear diffusion terms. The stochastic Burgers type and semilinear reaction diffusion equations, stochastic 2D Navier-Stokes equations (NSE) and other hydrodynamic models like stochastic magnetohydrodynamic (MHD) equations, the stochastic Boussinesq model for the B\'enard convection, the stochastic 2D magnetic B\'enard problem, stochastic 3D Leray-$\alpha$ model (see \cite{IDC} for more details), stochastic shell model of turbulence, stochastic Ladyzhenskaya model, stochastic power law fluids, etc (cf. \cite{GLS,LR,LR1}, etc and the references therein) are examples of the class of locally monotone PDEs.  
	
	The convective Brinkman-Forchheimer (CBF) equations describe the motion of incompressible fluid flows in a saturated porous medium. The applicability of CBF equations is limited to flows when the velocities are sufficiently high and porosities are not too small, that is, when the Darcy law for a porous medium no longer applies (cf. \cite{PAM}). Let $\mathcal{O}\subset\R^n \ (n=2,3)$ be a bounded domain with boundary $\partial\mathcal{O}$. Let $\u(t , x) \in \R^n$, $p(t,x)\in\R$ represent the velocity field  and  pressure field at time $t$ and position $x$, respectively. We consider the following  nonautonomous stochastic convective Brinkman-Forchheimer equations with $s\in\R$: 
	\begin{equation}\label{1}
	\left\{
	\begin{aligned}
	\d\u+[-\mu \Delta\u+(\u\cdot\nabla)\u+\alpha\u+\beta|\u|^{r-1}\u+\nabla p]\d t&=\mathbf{f}(t)\d t + \varepsilon\upsigma(\cdot,\u)\d\mathrm{W}, \ \text{ in } \ \mathcal{O}\times(s,\infty), \\ \nabla\cdot\u&=0, \ \text{ in } \ \mathcal{O}\times(s,\infty), \\
	\u&=\mathbf{0},\ \ \text{ on } \ \partial\mathcal{O}\times(s,\infty), \\
	\u(s)&=\u_0, \ \text{ in } \ \mathcal{O},
	\end{aligned}
	\right.
	\end{equation}where $\varepsilon$ is positive constant, $\f(\cdot,\cdot)\in\R^n$ represents the external forcing, $\upsigma(\cdot,\cdot)$ is a nonlinear diffusion term, and $\W(\cdot)$ is a two-sided Wiener process of trace class defined on some complete filtered probability space. The constant $\mu>0$ represents the Brinkman coefficient (effective viscosity), the positive constants $\alpha$ and $\beta$ represent the Darcy (permeability of porous medium) and Forchheimer coefficients, respectively. For $\alpha=\beta=0$, the system \eqref{1} reduces to the nonautonomus classical stochastic Navier-Stokes equations (SNSE). The exponent $r\in[1,\infty)$ is called the absorption exponent and  $r=3$ is known as the critical exponent.  The critical homogeneous CBF equations \eqref{1} have the same scaling as the Navier-Stokes equations (NSE) only when $\alpha=0$ (see Proposition 1.1, \cite{HR} and no scale invariance property for other values of $\alpha$ and $r$), which is sometimes referred to as the NSE modified by an absorption term (\cite{SNA}) or the tamed NSE (\cite{MRXZ}). 
	
	For the existence of unique solutions of deterministic CBF equations, the interested readers are referred to see \cite{SNA,FHR,HR,PAM,MTM2}, etc and for its stochastic counterpart see \cite{MTM} for strong solutions and \cite{LHGH1} for martingale solutions. In the stochastic case, for $\mu, \beta>0$, the existence of a pathwise unique strong solution for the two dimensional SCBF equations  for any $r\geq1$ is proved in \cite{MTM}, while  for the three dimensional SCBF equations, it is established for $r>3$.  For the critical case, the same results are established for $2\mu\beta\geq1$, only  (see \cite{MTM} for more details). The monotonicity property of linear and nonlinear operators as well as a stochastic generalization of the Minty-Browder techniques were exploited in the proofs. Similar to the case 3D stochastic NSE, the existence of a unique pathwise strong solution for the 3D SCBF equations for $r\in[1,3)$ is still an open problem. 
	
	Our main interest is on the long term dynamics of problem \eqref{1}, more precisely the existence and uniqueness of weak pullback mean random attractors. The theory of pathwise pullback random attractors was first introduced in \cite{CF,FS}, and thereafter several authors used this theory and proved the existence of random attractors for several SPDEs, for e.g. \cite{BLL,BLW,BGT,BCLLLR,CLR1,CLR2,Crauel1,CDF,FY,GLS,LL,Phan,Wang2,WLYJ,You} etc and the references therein.   The existence of pathwise pullback random attractors for the two and three dimensional SCBF equations in bounded, periodic and unbounded domains  with additive noise is established in \cite{KM,KM1,KM2,KM3}, etc. The existence of pathwise random attractors for locally monotone stochastic partial differential equations with additive L\'evy type noise is examined in \cite{GLS}.
	
	It is observed that all the papers on the pathwise pullback random attractors have imposed either additive noise or multiplicative noise with the nonlinear diffusion term $\upsigma$ that requires either $\upsigma(\cdot,\u)$ be linear in $\u$ or have a very special structure like antisymmetry. To the best of our knowledge,  there is no result available in the literature on the existence of pathwise pullback random attractors for \eqref{1}, when $\upsigma$ is a general Lipschitz nonlinear function, and it is a major problem. In order to overcome this problem, the concept of mean-square as well as weak mean-square random attractors were introduced in  \cite{PEK}, but that too is very restrictive in practice (cf.  \cite{Wang} for a discussion on this).
	
	In order to deal with Lipschitz nonlinear diffusion term $\upsigma(\cdot,\cdot)$, the  concept of weak pullback mean random attractor in the spaces of Bochner integrable functions is introduced in \cite{Wang} and the author investigated the long time behavior of the stochastic reaction-diffusion equations with both nonlinear drift and nonlinear diffusion term. One can think a weak pullback mean random attractor for a mean random dynamical system as a minimal weakly compact and weakly pullback attracting set in a Bochner space (see Definition \ref{def2.9} for more details). Using the concept introduced in \cite{Wang}, the existence of weak mean pullback mean random attractors for the 2D stochastic NSE with nonlinear diffusion terms is proved in \cite{Wang1} and for the non-autonomous $p$-Laplacian equations is established in \cite{GU}. Recently, in \cite{GU1}, the author proved the existence of weak pullback mean random attractors for abstract stochastic evolution equations  such as stochastic reaction-diffusion equations, the stochastic $p$-Laplace equation and stochastic porous media equations. We point out here that the models considered in this work does not fall in the framework of the abstract stochastic evolution equations considered in \cite{GU1}.
	
	In this paper, we prove the existence and uniqueness of weak pullback mean random attractors for the system \eqref{1} in appropriate Bochner spaces. Moreover, we prove the existence of weak pullback mean random attractors for general stochastic evolution equations, which cover several fluid dynamic models and whose coefficient satisfies the locally monotone condition along with some other conditions (see section \ref{sec6} and \cite{IDC,LR,LR1} for more details). We point out that the 2D SCBF equations  \eqref{1} with $r>2$ do not fall in the category of stochastic evolution equations considered in \cite{IDC,LR,LR1}, etc. Thus, we need a different analysis for the SCBF equations even in two dimensions.  As discussed in \cite{Wang1}, we mention here that the existence of invariant weak pullback random attractors for \eqref{1} remains open, as the invariance of weak pullback random attractors requires the weak continuity of the solution operators of stochastic equations in a Bochner space (see \cite{KL}).
	
	The rest of the paper are organized as follows.  In the next section, we discuss about the function spaces, linear and nonlinear operators, hypothesis satisfied by the noise coefficient, the abstract formulation of the system \eqref{1} and global solvability results. We provide the basic definitions and results regarding the existence of weak pullback mean random attractors also in the same section. The section \ref{sec4} is devoted for establishing the existence of weak pullback mean random attractors for the 2D SCBF equations for the absorption exponent $r\in[1,3]$ (Theorem \ref{WPMRA1}). In section \ref{sec5}, we prove the existence of weak pullback mean random attractors for the 2D and 3D SCBF equations for $r>3$ with any $\mu,\beta>0,$ and $r=3$ with $2\mu\beta\geq1$ (Theorem \ref{WPMRA2}). In the final section, we examine the existence of weak pullback mean random attractors for the stochastic evolution equations of the type \eqref{LM_EQ}, which satisfies all the conditions of Hypothesis \ref{LM_F_H}-\ref{LMf_H} (Theorems \ref{WPMRA3} and \ref{WPMRA4}).
	
	\section{Mathematical Formulation}\label{sec3}\setcounter{equation}{0}
	The goal of this section is to present the necessary function spaces and properties of linear and nonlinear operators needed to prove existence and uniqueness of pathwise strong solutions to the system \eqref{1}. 
	\subsection{Function spaces} Let us define   $\mathscr{V}:=\{\u\in\C_0^{\infty}(\mathcal{O},\R^n):\nabla\cdot\u=0\}.$ Let $\H$ be the closure of $\mathscr{V}$ in space $\L^2(\mathcal{O})=\mathrm{L}^2(\mathcal{O};\R^n)$ with the norm  $\|\u\|_{\H}^2:=\int_{\mathcal{O}}|\u(x)|^2\d x,
	$ and inner product $(\u,\v)=\int_{\mathcal{O}}\u(x)\cdot\v(x)\d x,$ for all $\u,\v\in\L^2(\mathcal{O})$, respectively. Let $\V$ be the closure of $\mathscr{V}$ in space $\H_0^1(\mathcal{O})=\mathrm{H}_0^1(\mathcal{O};\R^n)$ with the norm $ \|\u\|_{\V}^2:=\int_{\mathcal{O}}|\nabla\u(x)|^2\d x,
	$ and the inner product $(\!(\u,\v)\!)=(\nabla\u,\nabla\v)=\int_{\mathcal{O}}\nabla\u(x)\cdot\nabla\v(x)\d x,$ for all $\u,\v\in\V$, respectively. Let $\widetilde{\L}^{p}$ be the closure of  $\mathscr{V}$  in space  $\L^p(\mathcal{O})=\mathrm{L}^p(\mathcal{O};\R^n),$ for $p\in(2,\infty)$, with the norm $\|\u\|_{\widetilde{\L}^p}^p=\int_{\mathcal{O}}|\u(x)|^p\d x.$ Let $\langle \cdot,\cdot\rangle $ represent the induced duality between the spaces $\V$  and its dual $\V'$ as well as $\widetilde{\L}^p$ and its dual $\widetilde{\L}^{p'}$, where $\frac{1}{p}+\frac{1}{p'}=1$. Note that $\H$ can be identified with its dual $\H'$. We endow the space $\V\cap\widetilde{\L}^{p}$ with the norm $\|\u\|_{\V}+\|\u\|_{\widetilde{\L}^{p}},$ for $\u\in\V\cap\widetilde{\L}^p$ and its dual $\V'+\widetilde{\L}^{p'}$ with the norm $$\inf\left\{\max\left(\|\v_1\|_{\V'},\|\v_2\|_{\widetilde{\L}^{p'}}\right):\v=\v_1+\v_2, \ \v_1\in\V', \ \v_2\in\widetilde{\L}^{p'}\right\}.$$ Moreover, we have the continuous embedding $\V\cap\widetilde{\L}^p\hookrightarrow\H\hookrightarrow\V'+\widetilde{\L}^{p'}$. In the rest of the paper, we use the notation $\H^2(\mathcal{O}):=\mathrm{H}^2(\mathcal{O};\R^2)$ for the second order Sobolev spaces.
	\subsection{Linear operator}\label{Operator}
	Let $\mathcal{P}: \L^p(\mathcal{O}) \to\wi\L^p,$ $p\in[1,\infty)$, denote the Helmholtz-Hodge projection (cf.  \cite{DFHM}). It is a bounded linear operator and for $p=2$, $\mathcal{P}$ becomes an orthogonal projection (\cite{OAL}). Let us define $$\A\u:=-\mathcal{P}\Delta\u\ \text{ with the domain }\ \D(\A)=\V\cap\H^{2}(\mathcal{O}).$$
	For the bounded domain $\mathcal{O}$, we also have (see subsection 2.2,\cite{MTM})
	\begin{align}\label{poin}
	\lambda_1\|\u\|_{\mathbb{H}}^2\leq\|\u\|_{\mathbb{V}}^2,  \text{ for all }  \u\in\V,
	\end{align}where $\lambda_1$ is the smallest eigenvalue of operator $\A$.
	
	\subsection{Nonlinear operators}
	Let us define the \emph{trilinear form} $b(\cdot,\cdot,\cdot):\V\times\V\times\V\to\R$ by $$b(\u,\v,\w)=\int_{\mathcal{O}}(\u(x)\cdot\nabla)\v(x)\cdot\w(x)\d x=\sum_{i,j=1}^n\int_{\mathcal{O}}\u_i(x)\frac{\partial \v_j(x)}{\partial x_i}\w_j(x)\d x.$$ If $\u, \v$ are such that the linear map $b(\u, \v, \cdot) $ is continuous on $\V$, the corresponding element of $\V'$ is denoted by $\B(\u, \v)$. We also denote  $$\B(\u) = \B(\u, \u)=\mathcal{P}[(\u\cdot\nabla)\u].$$
	An integration by parts yields, for all $\u,\v,\w\in \V$,
	\begin{equation}\label{b0}
	b(\u,\v,\w) =  -b(\u,\w,\v)\ \text{ and }\	b(\u,\v,\v) = 0.
	\end{equation}
	
	Let us now consider the operator $$\mathcal{C}(\u):=\mathcal{P}(|\u|^{r-1}\u).$$ It is immediate that $\langle\mathcal{C}(\u),\u\rangle =\|\u\|_{\widetilde{\L}^{r+1}}^{r+1}$ and the map $\mathcal{C}(\cdot):\widetilde{\L}^{r+1}\to\widetilde{\L}^{\frac{r+1}{r}}$. The following results discuss about the monotonicity properties of linear and nonlinear operators. 
	\begin{lemma}[\cite{MTM}]\label{thm2.2}
		Let $n=2$, $r\in[1,3]$ and $\u_1,\u_2\in\V$. Then,	for the operator $\G(\u)=\mu \A\u+\B(\u)+\beta\mathcal{C}(\u)$, we  have 
		\begin{align}\label{fe2}
		\langle(\G(\u_1)-\G(\u_2),\u_1-\u_2\rangle+ \frac{27}{32\mu ^3}N^4\|\u_1-\u_2\|_{\H}^2\geq 0,
		\end{align}
		for all $\u_2\in{\mathbb{B}}_N$, where ${\mathbb{B}}_N$ is an $\widetilde{\L}^4$-ball of radius $N$, that is,
		$
		{\mathbb{B}}_N:=\big\{\z\in\widetilde{\L}^4:\|\z\|_{\widetilde{\L}^4}\leq N\big\}.
		$
	\end{lemma}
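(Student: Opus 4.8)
The plan is to set $\w=\u_1-\u_2$ and to estimate separately the three contributions to $\langle\G(\u_1)-\G(\u_2),\w\rangle$ coming from the Stokes operator $\mu\A$, the inertia term $\B$, and the Forchheimer term $\beta\mathcal{C}$. The viscous part is immediate, since $\langle\A\u,\v\rangle=(\!(\u,\v)\!)$ gives $\mu\langle\A\u_1-\A\u_2,\w\rangle=\mu\|\w\|_{\V}^2\geq0$; this is the good term that will absorb the inertial contribution. For the Forchheimer part I would invoke the monotonicity of the map $\z\mapsto|\z|^{r-1}\z$: by the standard elementary vector inequality one has $\beta\langle\mathcal{C}(\u_1)-\mathcal{C}(\u_2),\w\rangle\geq0$ for every $r\geq1$, so this term may simply be discarded. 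This already explains why the final constant depends only on $\mu$ and $N$ and not on $\beta$.

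The heart of the matter is the inertia term. Writing $\B(\u_i)=\B(\u_i,\u_i)$ and using bilinearity, $\B(\u_1)-\B(\u_2)=\B(\u_1,\w)+\B(\w,\u_2)$, so that $\langle\B(\u_1)-\B(\u_2),\w\rangle=b(\u_1,\w,\w)+b(\w,\u_2,\w)$. By the cancellation property in \eqref{b0} the first term vanishes, and the antisymmetry in \eqref{b0} rewrites the second as $b(\w,\u_2,\w)=-b(\w,\w,\u_2)$. This algebraic step is the crucial one: it shifts the gradient off $\u_2$ and onto $\w$, so that only the $\widetilde{\L}^4$-norm of $\u_2$ enters, and that norm is bounded by $N$ on $\mathbb{B}_N$. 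A H\"older estimate with exponents $(4,2,4)$ then yields $|b(\w,\w,\u_2)|\leq\|\w\|_{\widetilde{\L}^4}\|\w\|_{\V}\|\u_2\|_{\widetilde{\L}^4}\leq N\|\w\|_{\widetilde{\L}^4}\|\w\|_{\V}$.

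To close the estimate I would apply the two-dimensional Ladyzhenskaya (Gagliardo--Nirenberg) interpolation inequality $\|\w\|_{\widetilde{\L}^4}^2\leq C\|\w\|_{\H}\|\w\|_{\V}$, turning the bound into $|b(\w,\w,\u_2)|\leq CN\|\w\|_{\H}^{1/2}\|\w\|_{\V}^{3/2}$, and then use Young's inequality with conjugate exponents $\tfrac43$ and $4$, tuning the parameter so that the $\|\w\|_{\V}^2$-part is matched exactly by the available viscosity $\mu\|\w\|_{\V}^2$. The remainder is a multiple of $\|\w\|_{\H}^2$, and a routine bookkeeping of the interpolation and Young constants produces precisely the coefficient $\tfrac{27}{32\mu^3}N^4$. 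Collecting the three pieces gives $\langle\G(\u_1)-\G(\u_2),\w\rangle\geq\mu\|\w\|_{\V}^2-|b(\w,\w,\u_2)|\geq-\tfrac{27}{32\mu^3}N^4\|\w\|_{\H}^2$, which is exactly \eqref{fe2}.

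The main obstacle is the inertia term, and more precisely the requirement that it be absorbed using the viscosity alone, with the Forchheimer term merely dropped. This is what restricts the clean local-monotonicity estimate to the two-dimensional setting and to the $\widetilde{\L}^4$-ball $\mathbb{B}_N$: in two dimensions the $\widetilde{\L}^4$--$\H$--$\V$ interpolation is available and the inertial term scales like $\|\w\|_{\V}^{3/2}$, which Young's inequality can dominate by $\mu\|\w\|_{\V}^2$ at the price of an $\H$-norm term. For $r>3$ one would instead retain and exploit the coercivity of $\beta\mathcal{C}$ to control the inertia globally, obtaining genuine monotonicity without localizing $\u_2$ to a ball; that is the content of the complementary estimate used later, and it is the reason the present lemma is phrased only for $r\in[1,3]$.
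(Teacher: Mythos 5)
Your proof is correct and follows essentially the same route as the source the paper cites for this lemma (the paper itself states it without proof, quoting \cite{MTM}): keep the viscous term, discard $\beta\mathcal{C}$ by the pointwise monotonicity of $\z\mapsto|\z|^{r-1}\z$, reduce the inertia to $b(\w,\u_2,\w)=-b(\w,\w,\u_2)$ via \eqref{b0}, and close with H\"older $(4,2,4)$, the 2D Ladyzhenskaya inequality and Young's inequality with exponents $\tfrac43$ and $4$. One bookkeeping remark: tuning Young to absorb the \emph{full} $\mu\|\w\|_{\V}^2$ as you describe yields a remainder constant strictly smaller than $\tfrac{27}{32\mu^3}N^4$ (for instance $\tfrac{27}{128\mu^3}N^4$ with the Ladyzhenskaya constant $2^{1/4}$), so your claim of getting ``precisely'' $\tfrac{27}{32\mu^3}N^4$ is not quite right — that constant corresponds to absorbing only $\tfrac{\mu}{2}\|\w\|_{\V}^2$ and retaining the spare $\tfrac{\mu}{2}\|\w\|_{\V}^2\geq0$ — but since any smaller constant implies \eqref{fe2} a fortiori, this discrepancy is harmless and your argument does prove the lemma.
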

	\begin{lemma}[Theorem 2.2, \cite{MTM}]\label{thm2.4}
		Let $n=2,3,$ $ r> 3$ and $\u_1, \u_2 \in \V\cap\widetilde{\L}^{r+1}.$ Then, we have
		\begin{align}\label{fe4}
		\langle\G(\u_1)-\G(\u_2),\u_1-\u_2\rangle+ \eta_2\|\u_2-\u_2\|_{\H}^2&\geq 0,
		\end{align}
		where $\eta_2=\frac{r-3}{2\mu(r-1)}\left(\frac{2}{\beta\mu (r-1)}\right)^{\frac{2}{r-3}}.$
	\end{lemma}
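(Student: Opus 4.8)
The plan is to test $\G(\u_1)-\G(\u_2)$ against $\w:=\u_1-\u_2$ and to use the positivity of the Forchheimer term to absorb the convective term; the point is that for $r>3$ the absorption term is strong enough to leave only a controllable multiple of $\|\w\|_{\H}^2$. First I would split
\begin{align*}
\langle\G(\u_1)-\G(\u_2),\w\rangle=\mu\|\w\|_{\V}^2+\langle\B(\u_1)-\B(\u_2),\w\rangle+\beta\langle\mathcal{C}(\u_1)-\mathcal{C}(\u_2),\w\rangle,
\end{align*}
using $\langle\A\w,\w\rangle=\|\w\|_{\V}^2$. For the convective term, bilinearity gives $\B(\u_1)-\B(\u_2)=\B(\u_1,\w)+\B(\w,\u_2)$, so that the cancellation $b(\u_1,\w,\w)=0$ from \eqref{b0} reduces the contribution to $\langle\B(\u_1)-\B(\u_2),\w\rangle=b(\w,\u_2,\w)=-b(\w,\w,\u_2)$.

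Next I would bound the two nonlinear pieces. For the Forchheimer term I would invoke the standard pointwise inequality valid for $r\geq1$, namely $\langle\mathcal{C}(\u_1)-\mathcal{C}(\u_2),\w\rangle\geq\frac12\int_{\mathcal{O}}(|\u_1|^{r-1}+|\u_2|^{r-1})|\w|^2\d x$, of which I only retain the nonnegative $\u_2$-part $\frac{\beta}{2}\int_{\mathcal{O}}|\u_2|^{r-1}|\w|^2\d x$. For the convective term I would use the Hölder and Young inequalities,
\begin{align*}
|b(\w,\w,\u_2)|\leq\int_{\mathcal{O}}|\u_2||\w||\nabla\w|\d x\leq\frac{\mu}{2}\|\w\|_{\V}^2+\frac{1}{2\mu}\int_{\mathcal{O}}|\u_2|^2|\w|^2\d x,
\end{align*}
so that half of the viscous term is consumed and there remains the problematic quartic integral $\frac{1}{2\mu}\int_{\mathcal{O}}|\u_2|^2|\w|^2\d x$.

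The crux of the argument, and the step I expect to be the main obstacle, is to interpolate this quartic integral between the Forchheimer integral $\int_{\mathcal{O}}|\u_2|^{r-1}|\w|^2\d x$ and $\|\w\|_{\H}^2$ with \emph{exactly} the right constants. Writing $|\u_2|^2|\w|^2=\big(|\u_2|^2|\w|^{\frac{4}{r-1}}\big)\big(|\w|^{\frac{2(r-3)}{r-1}}\big)$ and applying Young's inequality with the conjugate exponents $\frac{r-1}{2}$ and $\frac{r-1}{r-3}$ and a free parameter $\delta>0$ yields
\begin{align*}
\int_{\mathcal{O}}|\u_2|^2|\w|^2\d x\leq\frac{2}{r-1}\delta^{\frac{r-1}{2}}\int_{\mathcal{O}}|\u_2|^{r-1}|\w|^2\d x+\frac{r-3}{r-1}\delta^{-\frac{r-1}{r-3}}\|\w\|_{\H}^2.
\end{align*}
I would then choose $\delta$ so that the first term matches the available Forchheimer term, i.e. $\frac{1}{2\mu}\cdot\frac{2}{r-1}\delta^{\frac{r-1}{2}}=\frac{\beta}{2}$, which forces $\delta=\big(\tfrac{\beta\mu(r-1)}{2}\big)^{\frac{2}{r-1}}$; substituting this $\delta$ into the coefficient $\frac{1}{2\mu}\cdot\frac{r-3}{r-1}\delta^{-\frac{r-1}{r-3}}$ of $\|\w\|_{\H}^2$ produces precisely $\eta_2=\frac{r-3}{2\mu(r-1)}\big(\frac{2}{\beta\mu(r-1)}\big)^{\frac{2}{r-3}}$.

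Finally I would assemble the estimates. Combining the viscous remainder, the retained Forchheimer term and the interpolation gives
\begin{align*}
\langle\G(\u_1)-\G(\u_2),\w\rangle\geq\frac{\mu}{2}\|\w\|_{\V}^2+\frac{\beta}{2}\int_{\mathcal{O}}|\u_2|^{r-1}|\w|^2\d x-\frac{1}{2\mu}\int_{\mathcal{O}}|\u_2|^2|\w|^2\d x\geq\frac{\mu}{2}\|\w\|_{\V}^2-\eta_2\|\w\|_{\H}^2,
\end{align*}
after the Forchheimer integral cancels. Discarding the nonnegative $\frac{\mu}{2}\|\w\|_{\V}^2$ yields $\langle\G(\u_1)-\G(\u_2),\w\rangle+\eta_2\|\w\|_{\H}^2\geq0$, which is \eqref{fe4}. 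Throughout, the hypothesis $\u_1,\u_2\in\V\cap\widetilde{\L}^{r+1}$ with $r>3$ guarantees the finiteness of every integral (via Hölder's inequality and the boundedness of $\mathcal{O}$), and is exactly what makes the exponent $\frac{r-1}{r-3}$ admissible.
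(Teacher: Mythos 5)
Your proof is correct, and it reconstructs essentially the argument behind the cited result (Theorem 2.2 of \cite{MTM}, which this paper quotes without reproving): the same decomposition of $\G$, the standard lower bound $\langle\mathcal{C}(\u_1)-\mathcal{C}(\u_2),\w\rangle\geq\frac12\int_{\mathcal{O}}(|\u_1|^{r-1}+|\u_2|^{r-1})|\w|^2\d x$, the H\"older--Young estimate on $b(\w,\w,\u_2)$ absorbing half the viscous term, and the parametrized Young interpolation of $\int_{\mathcal{O}}|\u_2|^2|\w|^2\d x$ whose optimal choice of $\delta$ yields exactly $\eta_2=\frac{r-3}{2\mu(r-1)}\left(\frac{2}{\beta\mu(r-1)}\right)^{\frac{2}{r-3}}$. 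Note also that you correctly read $\|\u_2-\u_2\|_{\H}^2$ in the statement as the obvious typo for $\|\u_1-\u_2\|_{\H}^2$.
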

	\begin{lemma}[Theorem 2.3, \cite{MTM}]\label{thm2.3}
		For $n=r=3$ with $2\beta\mu \geq 1$, the operator $\G(\cdot):\V\to \V'$ is globally monotone, that is, for all $\u_1,\u_2\in\V$, we have 
		\begin{align}\label{fe5}\langle\G(\u_1)-\G(\u_2),\u_1-\u_2\rangle\geq 0.\end{align}
	\end{lemma}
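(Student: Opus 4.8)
The plan is to set $\w=\u_1-\u_2\in\V$ and expand the pairing additively as
$$\langle\G(\u_1)-\G(\u_2),\w\rangle=\mu\langle\A\u_1-\A\u_2,\w\rangle+\langle\B(\u_1)-\B(\u_2),\w\rangle+\beta\langle\mathcal{C}(\u_1)-\mathcal{C}(\u_2),\w\rangle,$$
and to bound the three contributions separately. Since $n=3$ gives the embedding $\V\hookrightarrow\widetilde{\L}^6$, every pairing above is finite: $\mathcal{C}(\u)=\mathcal{P}(|\u|^2\u)\in\H$, and the convective form is controlled by a H\"older estimate with exponents $(6,2,3)$, so the manipulations are legitimate in the duality between $\V$ and $\V'$. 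The viscous term is immediate from $\langle\A\u,\v\rangle=(\!(\u,\v)\!)$, giving $\mu\langle\A\u_1-\A\u_2,\w\rangle=\mu\|\w\|_{\V}^2=\mu\|\nabla\w\|_{\H}^2$.

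For the Forchheimer term I would use the pointwise inequality, valid for all $\mathbf{a},\mathbf{b}\in\R^3$,
$$\big(|\mathbf{a}|^2\mathbf{a}-|\mathbf{b}|^2\mathbf{b}\big)\cdot(\mathbf{a}-\mathbf{b})\geq\tfrac12\big(|\mathbf{a}|^2+|\mathbf{b}|^2\big)|\mathbf{a}-\mathbf{b}|^2,$$
which follows by expanding both sides and noting that their difference equals $\tfrac12(|\mathbf{a}|^2-|\mathbf{b}|^2)^2\geq0$. Integrating over $\mathcal{O}$ with $\mathbf{a}=\u_1(x),\mathbf{b}=\u_2(x)$ yields
$$\beta\langle\mathcal{C}(\u_1)-\mathcal{C}(\u_2),\w\rangle\geq\frac{\beta}{2}\int_{\mathcal{O}}\big(|\u_1(x)|^2+|\u_2(x)|^2\big)|\w(x)|^2\d x.$$

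For the convective term I would write $\langle\B(\u_1)-\B(\u_2),\w\rangle=b(\u_1,\u_1,\w)-b(\u_2,\u_2,\w)=b(\u_1,\w,\w)+b(\w,\u_2,\w)$; the first term vanishes by \eqref{b0}, and the second satisfies $b(\w,\u_2,\w)=-b(\w,\w,\u_2)$, again by \eqref{b0}. Hence, by the Cauchy--Schwarz inequality,
$$|\langle\B(\u_1)-\B(\u_2),\w\rangle|=|b(\w,\w,\u_2)|\leq\int_{\mathcal{O}}|\u_2||\w||\nabla\w|\d x\leq\|\nabla\w\|_{\H}\left(\int_{\mathcal{O}}|\u_2|^2|\w|^2\d x\right)^{1/2}.$$

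Collecting the three bounds and discarding the nonnegative $\tfrac{\beta}{2}\int_{\mathcal{O}}|\u_1|^2|\w|^2\d x$ contribution, I would set $A=\|\nabla\w\|_{\H}$ and $B=\big(\int_{\mathcal{O}}|\u_2|^2|\w|^2\d x\big)^{1/2}$, reducing everything to the inequality $\mu A^2-AB+\tfrac{\beta}{2}B^2\geq0$. The decisive point---and the one place where the hypothesis enters sharply---is to resist Young-splitting the cross term prematurely (that route would only deliver the cruder threshold $\mu\beta\geq1$) and instead treat $\mu A^2-AB+\tfrac{\beta}{2}B^2$ as a single quadratic form. Completing the square gives $\mu\big(A-\tfrac{B}{2\mu}\big)^2+\tfrac{2\mu\beta-1}{4\mu}B^2$, which is nonnegative for all $A,B$ precisely when its discriminant $1-2\mu\beta$ is nonpositive, i.e. when $2\beta\mu\geq1$. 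This is exactly the stated assumption, and it closes the argument. The genuine obstacle here is therefore not any individual estimate but the bookkeeping: one must keep the Forchheimer gain and the convective loss coupled within one quadratic form to extract the optimal constant $2\beta\mu\geq1$.
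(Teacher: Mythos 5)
Your proof is correct and follows essentially the same route as the source of this lemma: the paper states it without proof, quoting Theorem 2.3 of \cite{MTM}, and the argument there uses exactly your decomposition, the pointwise monotonicity of $\mathbf{a}\mapsto|\mathbf{a}|^2\mathbf{a}$, and the bound $|b(\w,\w,\u_2)|\leq\|\nabla\w\|_{\H}\left(\int_{\mathcal{O}}|\u_2|^2|\w|^2\d x\right)^{1/2}$, closed by the Young inequality $AB\leq\mu A^2+\frac{1}{4\mu}B^2$, which is the identical computation to your completed square. One small quibble with your commentary only: that optimal Young split yields precisely the threshold $2\beta\mu\geq1$, not the cruder $\mu\beta\geq1$ you suggest, so nothing is lost by splitting the cross term---but this does not affect the validity of your proof.
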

	\subsection{Wiener process}\label{WP}
	Let $(\Omega, \mathscr{F},\{\mathscr{F}_t\}_{t\in\R},\mathbb{P})$ be a complete filtered probability space, where $\{\mathscr{F}_t\}_{t\in\R}$ is an increasing right continuous family of sub-$\sigma$-algebras of $\mathscr{F}$ that contains all $\mathbb{P}$-null sets. 
	
	Let $Q$ be a symmetric non-negative bounded linear trace class operator in $\H$. The stochastic process $\{\W(t):t\in\R\}$ is an $\H$-valued Wiener process with covariance $Q$ if and only if for arbitrary $t$, the process $\W(t)$ can be expressed as $$\W(t)=\sum_{k=1}^{\infty} \sqrt{\mu_k}e_k(x)\beta_k(t),$$ where $\beta_k(t), k\in\N$ are independent one dimensional Brownian motions on $(\Omega,\mathscr{F},\mathbb{P})$ and $\{e_k\}_{k=1}^{\infty}$ are the orthonormal basis functions of $\H$ such that $Qe_k=\mu_ke_k$ (Proposition 4.3, \cite{DZ1}). Let $\H_0=Q^{1/2}\H$ and $\mathcal{L}_2(\H_0,\H)$ be the space of Hilbert-Schmidt operators from $\H_0$ to $\H$ with norm $\|\cdot\|_{\mathcal{L}_2(\H_0,\H)}$ given by 
	$$\|\Psi\|^2_{\mathcal{L}_2(\H_0,\H)}=\Tr(\Psi Q\Psi^*),\ \text{ for all } \ \Psi \in \mathcal{L}_2(\H_0,\H),$$ where $\Psi^*$ is the adjoint of the operator of $\Psi$. 
	\subsection{Abstract formulation}
	On taking projection $\mathcal{P}$ onto the first equation in \eqref{1}, we obtain 
	\begin{equation}\label{S-CBF}
	\left\{
	\begin{aligned}
	\d\u(t)+\{\mu \A\u(t)+\B(\u(t))+\alpha\u(t)+\beta \mathcal{C}(\u(t))-\f(t)\}\d t&=\varepsilon\upsigma(t,\u) \d\mathrm{W}(t), \ \ \ t>s, \\ 
	\u(s)&=\u_0,
	\end{aligned}
	\right.
	\end{equation}
	where $\u_0\in \H$, $\W(t)$ is a $Q$-Wiener process defined in $\H$. The noise coefficient satisfies the following Hypothesis:  
	
	\begin{hypothesis}\label{sigmaH}
		The noise coefficient $\upsigma(\cdot,\cdot)$ satisfies the following:
		\begin{itemize}
			\item [(H.1)] The function $\upsigma\in\mathrm{C}(\R\times\V;\mathcal{L}_2(\H_0,\H)).$
			\item[(H.2)] \emph{(Growth condition)} There exists a positive constant $K$ such that for all $\u\in\H$ and $t\in\R$,$$\|\upsigma(t,\u)\|^2_{\mathcal{L}_2(\H_0,\H)}\leq K(1+\|\u\|_{\H}^2).$$
			\item[(H.3)] \emph{(Lipschitz condition)} There exists a positive constant $L$ such that for all $\u,\v\in\H$ and $t\in\R$,$$\|\upsigma(t,\u)-\upsigma(t,\v)\|^2_{\mathcal{L}_2(\H_0,\H)}\leq L\|\u-\v\|_{\H}^2.$$
		\end{itemize}
	\end{hypothesis}
	\begin{example}[\cite{Wang1}]
		Let us discuss an example of the diffusion term $\upsigma,$ which satisfies all the conditions of Hypothesis \ref{sigmaH}. Let $e_0$ be an arbitrary  element of $\H$ with $\|e_0\|_{\H}=1$ and let $\emph{span}\{e_0\}$ be the space spanned by $e_0$. Let $Q$ be the projection operator from $\H$ to $\emph{span}\{e_0\}$. Then $Q$ is a symmetric non-negative bounded linear trace class operator in $\H$.
		
		Let $\beta_0(\cdot)$ be a real-valued Wiener process on $(\Omega, \mathscr{F},\{\mathscr{F}_t\}_{t\in\R},\mathbb{P})$ and $\W(t)=\beta_0(t)e_0.$ Then $\W(t)$ is a $Q$-Wiener process defined on $(\Omega, \mathscr{F},\{\mathscr{F}_t\}_{t\in\R},\mathbb{P})$ taking values in $\H$, and $\H_0=Q^{1/2}\H=\emph{span}\{e_0\}.$ Let $\upsigma_0:\V\to\H$ be the mapping given by $$\upsigma_0(\u)=\u+\emph{sin}\u,\   \text{ for all }\ \u\in\V.$$It is easy to show that there exists two positive constant K and  L such that 
		\begin{align}\label{sigma_0_1}
		\|\upsigma_0(\u)\|^2_{\H}\leq K(1+\|\u\|^2_{\H}), \  \text{ for all }\  \u\in\V,
		\end{align}
		and
		\begin{align}\label{sigma_0_2}
		\|\upsigma_0(\u)-\upsigma_0(\v)\|^2_{\H}\leq L\|\u-\v\|^2_{\H}, \  \text{ for all }\ \u,\v\in\V.
		\end{align}
		Given $\u\in\V$, let $\upsigma:\H_0\to\H$ be defined by 
		\begin{align}
		\upsigma(\u)(\v_0)=(\v_0,e_0)\upsigma_0(\u), \  \text{ for all } \ \v_0\in\H_0.
		\end{align}
		It shows that $\upsigma(\u)\in\mathcal{L}_2(\H_0,\H)$ and $$\|\upsigma(\u)\|^2_{\mathcal{L}_2(\H_0,\H)}=\|\upsigma(\u)(e_0)\|_{\H}^2=\|\upsigma_0(\u)\|^2_{\H}$$ and $$\|\upsigma(\u)-\upsigma(\v)\|^2_{\mathcal{L}_2(\H_0,\H)}=\|\upsigma_0(\u)-\upsigma_0(\v)\|^2_{\H}.$$Hence, It is clear from \eqref{sigma_0_1} and \eqref{sigma_0_2} that $\upsigma$ satisfies all the conditions of Hypothesis \ref{sigmaH}.
	\end{example}

	\subsection{Notations and preliminaries}
	In this section, we provide  some basic definitions and results on the existence of weak pullback mean random attractors for mean random dynamical systems, which have been borrowed from \cite{KL} (see \cite{Wang,Wang1} also). 
	
	Let $\mathrm{X}$ be a Banach space with the norm $\|\cdot\|_{\mathrm{X}}$. Given $p\in(1,\infty)$, let $\mathrm{L}^{p}(\Omega,\mathscr{F};\mathrm{X})$ be the Bochner space consisting of all Bochner integrable functions $\psi:\Omega\to\mathrm{X}$ such that
	\begin{align*}
	\|\psi\|_{\mathrm{L}^{p}(\Omega,\mathscr{F};\mathrm{X})} = \bigg(\int_{\Omega}\|\psi\|^p_{\mathrm{X}}\d \mathbb{P}\bigg)^{1/p}<\infty.
	\end{align*}
	For every $s\in\R,$ the space $\mathrm{L}^{p}(\Omega,\mathscr{F}_s;\mathrm{X})$ is defined similarly.
	
	Let $\mathfrak{D}$ be a collection of some families of nonempty bounded subsets of $\mathrm{L}^{p}(\Omega,\mathscr{F}_s;\mathrm{X})$ parametrized by $s\in\R,$ that is, 
	\begin{align}\label{ClassD}
	\mathfrak{D}=\{\mathrm{D}=\{\mathrm{D}(s)\subseteq\mathrm{L}^{p}(\Omega,\mathscr{F}_{s};\mathrm{X}):\mathrm{D}(s)\neq\emptyset \text{ bounded, } s\in\R\}:\mathrm{D} \text{ satisfies some conditions}\}.
	\end{align}
	Such a collection $\mathfrak{D}$ is called \emph{inclusion-closed} if $\mathrm{D}=\{\mathrm{D}(s):s\in\R\}\in \mathfrak{D}$ implies that every family $\mathrm{\widetilde{D}}=\{\mathrm{\widetilde{D}}(s):\emptyset\neq\mathrm{\widetilde{D}}(s)\subseteq\mathfrak{D}(s),\ \text{ for all } \ s\in\R\}$ also belongs to $\mathfrak{D}$. Let us now give the definition of mean random dynamical systems. 
	
	\begin{definition}\label{def_MRDS}
		A family $\Phi=\{\Phi(t,s):t\in\R^+, s\in \R\}$ of mappings is called a \emph{mean random dynamical system} on $\mathrm{L}^{p}(\Omega,\mathscr{F};\mathrm{X})$ over $(\Omega,\mathscr{F},\{\mathscr{F}_t\}_{t\in\R}, \mathbb{P}),$ if for all $s\in\R$ and $t,t_1,t_2\in \R^+,$ 
		\begin{itemize}
			\item [(i)] $\Phi(t,s)$ maps $\mathrm{L}^{p}(\Omega,\mathscr{F}_{s};\mathrm{X})$ to $\mathrm{L}^{p}(\Omega,\mathscr{F}_{t+s};\mathrm{X});$
			\item [(ii)] $\Phi(0,s)$ is the identity operator on $\mathrm{L}^{p}(\Omega,\mathscr{F};\mathrm{X});$
			\item [(iii)] $\Phi(t_1+t_2,s)=\Phi(t_1, s+t_2)\circ\Phi(t_2,s).$
		\end{itemize}
	\end{definition}
	\begin{definition}
		A family $\mathrm{K}=\{\mathrm{K}(s):s\in\R\}\in\mathfrak{D}$ is called a $\mathfrak{D}$-pullback absorbing set for $\Phi$ on $\mathrm{L}^{p}(\Omega,\mathscr{F};\mathrm{X})$ over $(\Omega, \mathscr{F},\{\mathscr{F}_t\}_{t\in\R},\mathbb{P}),$ if for every $s\in\R$ and $\mathrm{D}\in\mathfrak{D}$, there exists $T=T(s,\mathrm{D})>0$ such that 
		\begin{align*}
		\Phi(t,s-t)(\mathrm{D}(s-t))\subseteq\mathrm{K}(s) , \ \text{ for all }\ t\geq T.
		\end{align*}
		If, in addition, $\mathrm{K}(s)$ is weakly compact nonempty subset of $\mathrm{L}^{p}(\Omega,\mathscr{F}_s;\mathrm{X}),$ for every $s\in\R,$ then $\mathrm{K}=\{\mathrm{K}(s):s\in\R\}$ is called a \emph{weakly compact $\mathfrak{D}$-pullback absorbing set} for $\Phi$.
	\end{definition}
	\begin{definition}
		A family $\mathrm{K}=\{\mathrm{K}(s):s\in\R\}\in\mathfrak{D}$ is called a \emph{$\mathfrak{D}$-pullback weakly attracting set} of $\Phi$ on $\mathrm{L}^{p}(\Omega,\mathscr{F};\mathrm{X})$ over $(\Omega, \mathscr{F},\{\mathscr{F}_t\}_{t\in\R},\mathbb{P}),$ if for every $s\in\R$, $\mathrm{D}\in\mathfrak{D}$ and every weak neighborhood $\mathcal{N}^{w}(\mathrm{K}(s))$ of $\mathrm{K}(s)$ in $\mathrm{L}^{p}(\Omega,\mathscr{F}_s;\mathrm{X})$, there exists $T=T(s,\mathrm{D},\mathcal{N}^w(\mathrm{K}(s)))>0$ such that 
		\begin{align*}
		\Phi(t,s-t)(\mathrm{D}(s-t))\subseteq\mathcal{N}^w(\mathrm{K}(s)), \ \text{ for all }\ t\geq T.
		\end{align*}
	\end{definition}
	\begin{definition}\label{def2.9}
		A family $\mathcal{A}=\{\mathcal{A}(s):s\in\R\}\in\mathfrak{D}$ is called a \emph{weak $\mathfrak{D}$-pullback mean random attractor} for $\Phi$ on  $\mathrm{L}^{p}(\Omega,\mathscr{F};\mathrm{X})$ over $(\Omega, \mathscr{F},\{\mathscr{F}_t\}_{t\in\R},\mathbb{P}),$ if the following conditions \emph{(i)-(iii)} are satisfied:
		\begin{itemize}
			\item [(i)] $\mathcal{A}(s)$ is a weakly compact subset of $\mathrm{L}^{p}(\Omega,\mathscr{F}_s;\mathrm{X})$ for every $s\in\R$.
			\item[(ii)] $\mathcal{A}$ is a $\mathfrak{D}$-pullback weakly attracting set of $\Phi$.
			\item[(iii)] $\mathcal{A}$ is the minimal element of $\mathfrak{D}$ with properties \emph{(i)} and \emph{(ii)}; that is, if $\mathcal{B}=\{\mathcal{B}(s):s\in\R\}\in \mathfrak{D}$ satisfies \emph{(i)} and \emph{(ii)}, then $\mathcal{A}(s)\subseteq\mathcal{B}(s),$ for all $s\in\R$.
		\end{itemize} 
	\end{definition}
	Next, we provide  the result on the existence of weak $\mathfrak{D}$-pullback mean random attractors, which is proved in \cite{Wang}.
	\begin{theorem}[Theorem 2.7, \cite{Wang}]\label{Main-T}
		Suppose $\mathrm{X}$ is a reflexive Banach space and $p\in(1,\infty)$. Let $\mathfrak{D}$ be the inclusion-closed collection of some families of nonempty bounded subsets of $\mathrm{L}^{p}(\Omega,\mathscr{F};\mathrm{X})$ as given by \eqref{ClassD} and let $\Phi$ be a mean random dynamical system on $\mathrm{L}^{p}(\Omega,\mathscr{F};\mathrm{X})$ over $(\Omega,\mathscr{F},\{\mathscr{F}_{t}\}_{t\in\R},\mathbb{P})$. If $\Phi$ has a weakly compact $\mathfrak{D}$-pullback absorbing set $\mathrm{K}\in\mathfrak{D}$ on $\mathrm{L}^{p}(\Omega,\mathscr{F};\mathrm{X})$ over $(\Omega,\mathscr{F},\{\mathscr{F}_t\}_{t\in\R},\mathbb{P})$, then $\Phi$ has a unique weak $\mathfrak{D}$-pullback mean random attractor $\mathcal{A}\in\mathfrak{D}$, which is given by, for each $s\in\R$, 
		\begin{align*}
		\mathcal{A}(s)=\bigcap_{s\geq0}\overline{\bigcup_{t\geq s}\Phi(t,s-t)(\mathrm{K}(s-t))}^w,
		\end{align*}
		where the closure is taken with respect to the weak topology of $\mathrm{L}^{p}(\Omega,\mathscr{F}_s;\mathrm{X})$.
	\end{theorem}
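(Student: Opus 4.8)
The plan is to follow the classical $\omega$-limit-set construction, transported into the weak topology of the Bochner space, with reflexivity of $\mathrm{X}$ (hence of $\mathrm{L}^{p}(\Omega,\mathscr{F}_s;\mathrm{X})$ for $p\in(1,\infty)$) providing the compactness engine through the fact that norm-bounded sets are relatively weakly compact. I would take as the candidate attractor the set given in the statement, with the dummy index written $\tau$,
\[
\mathcal{A}(s)=\bigcap_{\tau\geq0}\overline{\bigcup_{t\geq\tau}\Phi(t,s-t)(\mathrm{K}(s-t))}^{w},
\]
and write $\mathrm{R}_\tau(s):=\overline{\bigcup_{t\geq\tau}\Phi(t,s-t)(\mathrm{K}(s-t))}^{w}$ for the weakly closed $\tau$-tails. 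Since $\mathrm{K}$ is a weakly compact $\mathfrak{D}$-pullback absorbing set and $\mathrm{K}\in\mathfrak{D}$, for each fixed $s$ there is $T$ with $\Phi(t,s-t)(\mathrm{K}(s-t))\subseteq\mathrm{K}(s)$ for all $t\geq T$; hence $\mathrm{R}_\tau(s)\subseteq\mathrm{K}(s)$ once $\tau\geq T$. The family $\{\mathrm{R}_\tau(s)\}_\tau$ is nested and decreasing, and each $\mathrm{R}_\tau(s)$ is a nonempty weakly closed subset of the weakly compact set $\mathrm{K}(s)$, so by the finite intersection property $\mathcal{A}(s)$ is a nonempty weakly compact subset of $\mathrm{L}^{p}(\Omega,\mathscr{F}_s;\mathrm{X})$, giving property (i). Membership $\mathcal{A}\in\mathfrak{D}$ then follows from $\mathcal{A}(s)\subseteq\mathrm{K}(s)$ together with the inclusion-closedness of $\mathfrak{D}$.

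For the weak pullback attraction (ii), the key preliminary step is to reduce $\mathrm{D}$-orbits to $\mathrm{K}$-orbits using the cocycle identity (iii) of Definition \ref{def_MRDS}. Fixing $s$, $\mathrm{D}\in\mathfrak{D}$ and $\tau\geq0$, I would split $t=\tau+\sigma$ and write
\[
\Phi(t,s-t)(\mathrm{D}(s-t))=\Phi(\tau,s-\tau)\big[\Phi(\sigma,(s-\tau)-\sigma)(\mathrm{D}((s-\tau)-\sigma))\big];
\]
by absorption at the endpoint $s-\tau$, for all $\sigma$ beyond the absorption time the inner bracket lies in $\mathrm{K}(s-\tau)$, so $\Phi(t,s-t)(\mathrm{D}(s-t))\subseteq\Phi(\tau,s-\tau)(\mathrm{K}(s-\tau))\subseteq\mathrm{R}_\tau(s)$ for all large $t$. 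I would then argue by contradiction: if $\mathcal{A}$ failed to weakly attract $\mathrm{D}$, there would be a weakly open neighborhood $\mathcal{N}^{w}(\mathcal{A}(s))$, times $t_n\to\infty$ and points $y_n\in\Phi(t_n,s-t_n)(\mathrm{D}(s-t_n))\setminus\mathcal{N}^{w}(\mathcal{A}(s))$; eventually $y_n\in\mathrm{K}(s)$, so the net $(y_n)$ has a weak cluster point $y\in\mathrm{K}(s)$. Since $y_n\in\mathrm{R}_\tau(s)$ for all large $n$ and each $\mathrm{R}_\tau(s)$ is weakly closed, passing to the cluster point gives $y\in\mathrm{R}_\tau(s)$ for every $\tau$, i.e. $y\in\mathcal{A}(s)$; but the $y_n$ lie in the weakly closed complement of $\mathcal{N}^{w}(\mathcal{A}(s))$, forcing $y\notin\mathcal{A}(s)$, a contradiction.

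For minimality (iii), suppose $\mathcal{B}\in\mathfrak{D}$ satisfies (i) and (ii). Since $\mathrm{K}\in\mathfrak{D}$, $\mathcal{B}$ weakly attracts $\mathrm{K}$: given any weakly open neighborhood $\mathcal{N}^{w}(\mathcal{B}(s))$ there is $T$ with $\Phi(t,s-t)(\mathrm{K}(s-t))\subseteq\mathcal{N}^{w}(\mathcal{B}(s))$ for $t\geq T$, whence $\mathcal{A}(s)\subseteq\mathrm{R}_T(s)\subseteq\overline{\mathcal{N}^{w}(\mathcal{B}(s))}^{w}$. Intersecting over all weakly open neighborhoods of the weakly compact, hence weakly closed, set $\mathcal{B}(s)$, and using that the weak topology is regular (being that of a Hausdorff locally convex space), yields $\bigcap_{\mathcal{N}^{w}}\overline{\mathcal{N}^{w}(\mathcal{B}(s))}^{w}=\mathcal{B}(s)$, so $\mathcal{A}(s)\subseteq\mathcal{B}(s)$. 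Uniqueness is then immediate: two such attractors each satisfy (i)--(ii) and are minimal, so they contain one another. The hard part throughout is the failure of the weak topology to be metrizable, which forces the use of nets and weak cluster points in place of sequences and makes the point-set topology (finite intersection property, regularity of the weak topology, weak closedness of neighborhood complements) carry the real weight; once reflexivity secures weak compactness of bounded sets, the dynamical input reduces to just the absorption property and the cocycle identity.
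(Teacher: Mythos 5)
The paper offers no proof of this statement: it is imported verbatim as Theorem 2.7 of \cite{Wang}, so there is no internal argument to compare against. Your proof is correct and is, in substance, the standard one (and the one given in the cited source): the weak $\omega$-limit construction $\mathcal{A}(s)=\bigcap_{\tau\ge 0}\mathrm{R}_\tau(s)$, with reflexivity of $\mathrm{L}^{p}(\Omega,\mathscr{F}_s;\mathrm{X})$ supplying weak compactness, the cocycle splitting $t=\tau+\sigma$ reducing $\mathrm{D}$-orbits to $\mathrm{K}$-orbits for attraction, a cluster-point (net) argument replacing sequential compactness, and regularity of the weak topology yielding $\bigcap_{\mathcal{N}^w}\overline{\mathcal{N}^w(\mathcal{B}(s))}^{\,w}=\mathcal{B}(s)$ for minimality. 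You also correctly repaired the index clash in the paper's displayed formula (the intersection variable should be a fresh symbol $\tau\ge 0$, not $s$), and your reduction of the intersection to the tails $\tau\ge T$ inside the weakly compact $\mathrm{K}(s)$ is exactly what makes nonemptiness and membership in $\mathfrak{D}$ go through.
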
 
	As discussed in \cite{Wang1}, we remark that $\mathrm{L}^{p}(\Omega,\mathscr{F};\mathrm{X})$ is not required to be separable in Theorem \ref{Main-T}, and hence the weak topology on bounded subsets of $\mathrm{L}^{p}(\Omega,\mathscr{F};\mathrm{X})$ is not metrizable. As such, Theorem \ref{Main-T} does not follow directly from the attractors theory in a metric space.

	\section{Weak Pullback Mean Random Attractors for \eqref{S-CBF}, $n=2$ and $r\in[1,3]$}\label{sec4}\setcounter{equation}{0}
	This section is devoted to discuss about the mean random dynamical system for the 2D nonautonomous SCBF equations  \eqref{1} with $r\in[1,3]$ over a filtered probability space  and establish the existence and uniqueness of weak pullback mean random attractors for the system \eqref{S-CBF}. Let us first provide the solvability results for the system \eqref{S-CBF}.
	\begin{definition}
		For $r\in[1,3]$, let $s\in\R$ and $\u_0\in \mathrm{L}^4(\Omega, \mathscr{F}_s;\H).$ Then, an $\H$-valued $\{\mathscr{F}_t\}_{t\in\R}$-adapted stochastic process $\{\u(t)\}_{t\in[s,\infty)}$ is called a \emph{strong solution} of \eqref{S-CBF} on $[s,\infty)$ with initial data $\u_0$ if $\u\in\mathrm{C}([s,\infty);\H)\cap\mathrm{L}^2_{\mathrm{loc}}((s,\infty);\V), \  \mathbb{P}$-a.s., and satisfies, for every $t>s$ and $\v\in\V$,
		\begin{align*}
		(\u(t),\v)
		&+ \int_{s}^{t} \langle\mu\A\u(\tau)+ \B\u(\tau)+\alpha\u(\tau)+\beta\mathcal{C}(\u(\tau)),\v\rangle\d \tau \\&= (\u_0,\v)+ \int_{s}^{t}\langle\f(\tau), \v\rangle\d\tau +\varepsilon\int_{s}^{t} (\v, \upsigma(\tau,\u)\d\W(\tau)),
		\end{align*}$\mathbb{P}$-a.s. A strong solution $\u(\cdot)$ to the system \eqref{S-CBF} is called a
		\emph{pathwise  unique strong solution} if
		$\widetilde{\u}(\cdot)$ is an another strong
		solution, then $$\mathbb{P}\big\{\omega\in\Omega:\u(t)=\widetilde{\u}(t),\ \text{ for all }\ t\in[0,T]\big\}=1.$$ 
	\end{definition}
	Under the Hypothesis \ref{sigmaH} on $\upsigma$, we provide the following result on the existence and uniqueness of pathwise strong solutions to the system \eqref{S-CBF}, which is proved in \cite{MTM1}.
	\begin{proposition}[Lemma 3.1, \cite{MTM1}]\label{MRDS}
		For $r\in[1,3]$, let all the conditions of Hypothesis \ref{sigmaH} be satisfied. Then, there exists $\varepsilon_0>0$ such that for every $\varepsilon\in(0,\varepsilon_0), s\in\R$, $\u_0\in \mathrm{L}^4(\Omega,\mathscr{F}_s;\H)$ and $\f\in \mathrm{L}^4_{\mathrm{loc}}(\R;\V')$, system \eqref{S-CBF} has a unique solution $$\u\in\mathrm{L}^4(\Omega; \mathrm{C}([s,s+T];\H))\cap\mathrm{L}^2(\Omega; \mathrm{L}^2((s,s+T);\V))\cap\mathrm{L}^{r+1}(\Omega;\mathrm{L}^{r+1}((s,s+T);\widetilde{\L}^{r+1})),$$ for every $T>0.$ Moreover, 
		\begin{align}\label{MRDS1}
		\mathbb{E}\bigg[\sup_{\tau\in[s,s+T]}\|\u(\tau)\|^4_{\H}&+ \mu\int_{s}^{s+T}\|\u(\tau)\|^2_{\H}\|\u(\tau)\|^2_{\V}\d\tau+\alpha\int_{s}^{s+T}\|\u(\tau)\|^4_{\H}\d\tau\nonumber\\&+\beta\int_{s}^{s+T}\|\u(\tau)\|^2_{\H}\|\u(\tau)\|^{r+1}_{\widetilde{\L}^{r+1}}\d\tau\bigg]\leq \M(1+\mathbb{E}(\|\u_0\|^4_{\H})),
		\end{align} where $\M>0$ is a constant independent of $\u_0$.
	\end{proposition}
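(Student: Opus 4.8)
The plan is to construct the solution via a finite-dimensional Galerkin approximation, derive uniform a priori bounds matching \eqref{MRDS1}, pass to the limit using the local monotonicity of $\G$, and finally establish pathwise uniqueness. Let $\{e_k\}_{k\geq1}$ be the orthonormal basis of $\H$ consisting of eigenfunctions of $\A$, let $\H_m=\mathrm{span}\{e_1,\dots,e_m\}$ and let $P_m$ denote the orthogonal projection from $\H$ onto $\H_m$. The finite-dimensional system
\begin{align*}
\d\u_m+\{\mu\A\u_m+P_m\B(\u_m)+\alpha\u_m+\beta P_m\mathcal{C}(\u_m)-P_m\f\}\d t=\varepsilon P_m\upsigma(t,\u_m)\d\W,\quad \u_m(s)=P_m\u_0,
\end{align*}
has locally Lipschitz coefficients on $\H_m$, so it admits a unique local solution; the a priori bounds below extend it globally.

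First I would obtain the key estimate by applying the It\^o formula to $\|\u_m(\tau)\|_{\H}^4$. Using $\langle\A\u_m,\u_m\rangle=\|\u_m\|_{\V}^2$, the cancellation $b(\u_m,\u_m,\u_m)=0$ from \eqref{b0}, the identity $\langle\mathcal{C}(\u_m),\u_m\rangle=\|\u_m\|_{\widetilde{\L}^{r+1}}^{r+1}$, and the growth condition (H.2), the deterministic terms reproduce precisely the dissipative integrals appearing on the left of \eqref{MRDS1} (the forcing $\langle\f,\u_m\rangle$ being absorbed after Young's inequality using $\f\in\mathrm{L}^4_{\mathrm{loc}}(\R;\V')$). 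The correction from the quadratic variation contributes a term proportional to $\varepsilon^2\|\u_m\|_{\H}^2\|\upsigma(\tau,\u_m)\|_{\mathcal{L}_2(\H_0,\H)}^2$; this is where the smallness threshold $\varepsilon_0$ enters, since one needs $\varepsilon^2$ small enough that this term, after applying (H.2) and the Poincar\'e inequality \eqref{poin}, can be absorbed by the coefficients of $\alpha\|\u_m\|_{\H}^4$ and $\mu\|\u_m\|_{\H}^2\|\u_m\|_{\V}^2$. The supremum over $\tau$ is controlled by the Burkholder--Davis--Gundy inequality applied to the martingale term, followed by Young's inequality and Gronwall's lemma, yielding \eqref{MRDS1} for $\u_m$ with $\M$ independent of $m$ and $\u_0$.

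The main obstacle is passing to the limit in the nonlinear terms, since $\G$ is only locally monotone (Lemma \ref{thm2.2}). From the uniform bounds I would extract (along a subsequence) weak$^*$ and weak limits $\u$ in $\mathrm{L}^4(\Omega;\mathrm{L}^\infty(s,s+T;\H))$, $\mathrm{L}^2(\Omega;\mathrm{L}^2((s,s+T);\V))$ and $\mathrm{L}^{r+1}(\Omega;\mathrm{L}^{r+1}((s,s+T);\widetilde{\L}^{r+1}))$, together with a weak limit $\G_0$ of $\G(\u_m)$ in the appropriate dual space. The identification $\G_0=\G(\u)$ is carried out by a stochastic Minty--Browder argument: I would apply It\^o to $e^{-\int_s^\tau\kappa(\ell)\d\ell}\|\u_m(\tau)\|_{\H}^2$ with $\kappa(\tau)\propto\frac{27}{32\mu^3}\|\u\|_{\widetilde{\L}^4}^4$, exploiting that in two dimensions the Ladyzhenskaya interpolation $\|\u\|_{\widetilde{\L}^4}^4\leq C\|\u\|_{\H}^2\|\u\|_{\V}^2$ makes $\int_s^{s+T}\kappa(\tau)\d\tau$ finite precisely because of the $\mu\int\|\u\|_{\H}^2\|\u\|_{\V}^2\d\tau$ bound in \eqref{MRDS1}. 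Weak lower semicontinuity of the energy, together with the local monotonicity inequality \eqref{fe2} tested against an arbitrary $\v\in\mathrm{L}^\infty(\Omega\times(s,s+T);\V\cap\widetilde{\L}^{r+1})$, then forces $\G_0=\G(\u)$, so that $\u$ solves \eqref{S-CBF}.

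Finally, for pathwise uniqueness I would take two solutions $\u_1,\u_2$ and apply It\^o to $\|\u_1-\u_2\|_{\H}^2$. The drift difference is handled by the local monotonicity \eqref{fe2}, with the radius $N$ replaced by the time-dependent quantity $\|\u_2(\tau)\|_{\widetilde{\L}^4}^4$, again integrable thanks to the 2D Ladyzhenskaya estimate; the diffusion difference is controlled by the Lipschitz condition (H.3). A Gronwall argument with this random (but $\mathbb{P}$-a.s.\ integrable) coefficient then gives $\u_1=\u_2$, $\mathbb{P}$-a.s., completing the proof.
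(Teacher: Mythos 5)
Your proposal is correct and follows what is essentially the paper's own route: the paper does not prove Proposition \ref{MRDS} at all but imports it as Lemma 3.1 of \cite{MTM1}, and the proof there (as in \cite{MTM}) is precisely the scheme you describe — Galerkin approximation, a fourth-moment It\^o/BDG estimate giving \eqref{MRDS1}, a stochastic Minty--Browder passage to the limit using the local monotonicity of Lemma \ref{thm2.2} with an exponential weight made integrable by the 2D Ladyzhenskaya inequality, and pathwise uniqueness by the same monotonicity plus (H.3). The only sketch-level omission is that in the limit identification the diffusion term must be folded into the monotonicity inequality (combining \eqref{fe2} with the Lipschitz condition (H.3)), not only the drift, but this is routine under Hypothesis \ref{sigmaH}.
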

	Note that $\u\in\mathrm{C}([s,\infty),\H)$ $\mathbb{P}$-a.s. Therefore, by \eqref{MRDS1} and the Lebesgue dominated convergence theorem we obtain that $\u\in\mathrm{C}([s,\infty),\mathrm{L}^4(\Omega,\mathscr{F};\H)),$ which helps us to define a mean random dynamical system for the system \eqref{S-CBF}. Given $t\in\R^+$ and $s\in\R,$ let $\Phi_1(t,s)$ be a mapping from $\mathrm{L}^4(\Omega,\mathscr{F}_s;\H)$ to $\mathrm{L}^4(\Omega,\mathscr{F}_{s+t};\H)$ defined by $$\Phi_1(t,s)(\u_0)=\u(t+s,s,\u_0),$$ where $\u_0\in \mathrm{L}^4(\Omega,\mathscr{F}_s;\H)$, and $\u(\cdot)$ is the unique strong solution to the system \eqref{S-CBF}. Since the solution of system \eqref{S-CBF} is unique, we get that for every $t_1,t_2\geq0$ and $s\in\R$, $$\Phi_1(t_1+t_2,s)=\Phi_1(t_1,t_2+s)\circ\Phi_1(t_2,s).$$ Consequently, $\Phi_1$ is a mean random dynamical system on $\mathrm{L}^4(\Omega,\mathscr{F};\H)$ over $(\Omega, \mathscr{F},\{\mathscr{F}_t\}_{t\in\R},\mathbb{P})$ in the sense of Definition \ref{def_MRDS}.
	
	Let $\mathcal{B}=\{\mathcal{B}(s)\subseteq\mathrm{L}^4(\Omega,\mathscr{F}_s;\H):s\in\R\}$ be a family of nonempty bounded sets such that \begin{align}\label{classB}
	\lim_{s\to-\infty}e^{\mu\lambda_1 s} \|\mathcal{B}(s)\|^4_{\mathrm{L}^4(\Omega,\mathscr{F}_s;\H)}=0,
	\end{align} where $$\|\mathcal{B}(s)\|_{\mathrm{L}^4(\Omega,\mathscr{F}_s;\H)}=\sup_{\u\in\mathcal{B}(s)}\|\u\|_{\mathrm{L}^4(\Omega,\mathscr{F}_s;\H)}.$$
	Let $\mathfrak{D}$ be the collection of all families of nonempty bounded sets with the property \eqref{classB}:
	\begin{align}
	\mathfrak{D}=\{\mathcal{B}=\{\mathcal{B}\subseteq\mathrm{L}^4(\Omega,\mathscr{F}_s;\H):\mathcal{B}\neq\phi \text{ bounded}, s\in\R\}:\mathcal{B} \text{ satisfies } \eqref{classB}\}.
	\end{align}
	Our aim is to prove the existence and uniqueness of weak $\mathfrak{D}$-pullback mean random attractors of $\Phi_1$ and we require the following assumption on $\f(\cdot)$.
	\begin{hypothesis}\label{f_H}
		The deterministic forcing term $\f$ satisfies
		\begin{align}\label{f_H1}
		\int_{-\infty}^{s} e^{\mu\lambda_1\tau}\|\f(\tau)\|^4_{\V'} \d\tau<\infty,\ \ \text{ for all } s\in\R.
		\end{align}
		From \eqref{f_H1}, we also have 
		\begin{align}\label{f_H2}
		\lim_{s\to-\infty} 	\int_{-\infty}^{s} e^{\mu\lambda_1\tau}\|\f(\tau)\|^4_{\V'} \d\tau=0.
		\end{align}
	\end{hypothesis}
	\subsection{Weak $\mathfrak{D}$-pullback mean random attractors}
	In this subsection, we prove the existence and uniqueness of weak $\mathfrak{D}$-pullback mean random attractors for the system \eqref{S-CBF} in $\mathrm{L}^4(\Omega,\mathscr{F};\H)$ over $(\Omega,\mathscr{F},\{\mathscr{F}_t\}_{t\in\R},\mathbb{P}).$
	\begin{lemma}\label{absorb}
		For $r\in[1,3]$, let us assume that \eqref{f_H1} and all the conditions of Hypothesis \ref{sigmaH} are satisfied. Then, there exists $\varepsilon_0>0$ such that for every $0<\varepsilon\leq\varepsilon_0$ and for every $s\in\R$ and $\mathcal{B}=\{\mathcal{B}(t)\}_{t\in\R}\in \mathfrak{D},$ there exists $T=T(s,\mathcal{B})>0$ such that for all $t\geq T,$ the solution $\u(\cdot)$ of system \eqref{S-CBF} satisfies  
		\begin{align}\label{ue}
		\mathbb{E}\left[\|\u(s,s-t,\u_0)\|_{\H}^4\right]\leq 1 +\frac{\mu\lambda_1}{8} +\frac{4}{\mu^3\lambda_1}e^{-\mu\lambda_1 s}\int_{-\infty}^{s}e^{\mu\lambda_1\tau}\|\f(\tau)\|^4_{\V'}\d\tau,
		\end{align} where $\u_0\in\mathcal{B}(s-t)$.
	\end{lemma}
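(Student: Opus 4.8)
The plan is to derive an energy inequality for the fourth moment $\E\|\u(t)\|_{\H}^4$ by applying the infinite-dimensional It\^o formula to the strong solution furnished by Proposition \ref{MRDS}, and then to feed the resulting scalar differential inequality into a Gronwall-type argument. First I would apply It\^o's formula to $\|\u(t)\|_{\H}^2$ using the abstract equation \eqref{S-CBF}. Since $\langle\B(\u),\u\rangle=b(\u,\u,\u)=0$ by \eqref{b0}, $\langle\A\u,\u\rangle=\|\u\|_{\V}^2$, and both $\alpha\|\u\|_{\H}^2$ and $\beta\|\u\|_{\widetilde{\L}^{r+1}}^{r+1}$ are nonnegative, the drift of $\d\|\u\|_{\H}^2$ reduces to $-2\mu\|\u\|_{\V}^2-2\alpha\|\u\|_{\H}^2-2\beta\|\u\|_{\widetilde{\L}^{r+1}}^{r+1}+2\langle\f,\u\rangle+\varepsilon^2\|\upsigma(t,\u)\|_{\mathcal{L}_2(\H_0,\H)}^2$, together with the martingale term $2\varepsilon(\u,\upsigma(t,\u)\d\W)$. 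A second application of the one-dimensional It\^o formula to $\|\u\|_{\H}^4=(\|\u\|_{\H}^2)^2$ then produces the drift $2\|\u\|_{\H}^2$ times the above, plus the quadratic-variation correction $4\varepsilon^2\|\upsigma(t,\u)^*\u\|_{\H_0}^2$.

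Next I would take the expectation to annihilate the stochastic integral and then estimate each remaining term. Splitting the dissipation $-4\mu\|\u\|_{\H}^2\|\u\|_{\V}^2$, I would use one half to absorb the forcing through Young's inequality, namely $4\|\u\|_{\H}^2\langle\f,\u\rangle\leq 2\mu\|\u\|_{\H}^2\|\u\|_{\V}^2+\frac{\mu\lambda_1}{4}\|\u\|_{\H}^4+\frac{4}{\mu^3\lambda_1}\|\f\|_{\V'}^4$, which is precisely what produces the constant $\frac{4}{\mu^3\lambda_1}$ appearing in \eqref{ue}; the remaining half gives $-2\mu\|\u\|_{\H}^2\|\u\|_{\V}^2\leq-2\mu\lambda_1\|\u\|_{\H}^4$ by the Poincar\'e inequality \eqref{poin}. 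The two noise contributions $2\varepsilon^2\|\u\|_{\H}^2\|\upsigma(t,\u)\|_{\mathcal{L}_2(\H_0,\H)}^2$ and $4\varepsilon^2\|\upsigma(t,\u)^*\u\|_{\H_0}^2$ are both bounded, via the growth condition (H.2), by a multiple of $\varepsilon^2(1+\|\u\|_{\H}^2)\|\u\|_{\H}^2$, and one further use of Young's inequality splits these into a constant $C\varepsilon^2$ and a multiple of $\|\u\|_{\H}^4$. Choosing $\varepsilon_0$ so small that this multiple together with the reserved $\frac{\mu\lambda_1}{4}$ stays below $\mu\lambda_1$, I obtain $\frac{\d}{\d t}\E\|\u(t)\|_{\H}^4\leq -\mu\lambda_1\E\|\u(t)\|_{\H}^4+\frac{4}{\mu^3\lambda_1}\|\f(t)\|_{\V'}^4+C\varepsilon^2$.

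Finally I would integrate this inequality by the variation-of-constants (Gronwall) formula over $[s-t,s]$ with initial datum $\u_0\in\mathcal{B}(s-t)$, obtaining $\E\|\u(s,s-t,\u_0)\|_{\H}^4\leq e^{-\mu\lambda_1 t}\E\|\u_0\|_{\H}^4+\frac{4}{\mu^3\lambda_1}e^{-\mu\lambda_1 s}\int_{s-t}^{s}e^{\mu\lambda_1\tau}\|\f(\tau)\|_{\V'}^4\d\tau+\frac{C\varepsilon^2}{\mu\lambda_1}$. Enlarging the range of the forcing integral to $(-\infty,s]$, which is legitimate by Hypothesis \ref{f_H}, produces exactly the last term of \eqref{ue}, while a further smallness restriction on $\varepsilon_0$ makes the constant noise term $\frac{C\varepsilon^2}{\mu\lambda_1}\leq\frac{\mu\lambda_1}{8}$. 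For the initial-data term I would write $e^{-\mu\lambda_1 t}\E\|\u_0\|_{\H}^4\leq e^{-\mu\lambda_1 s}e^{\mu\lambda_1(s-t)}\|\mathcal{B}(s-t)\|_{\mathrm{L}^4(\Omega,\mathscr{F}_{s-t};\H)}^4$; since $\mathcal{B}\in\mathfrak{D}$ satisfies \eqref{classB}, the factor $e^{\mu\lambda_1(s-t)}\|\mathcal{B}(s-t)\|_{\mathrm{L}^4(\Omega,\mathscr{F}_{s-t};\H)}^4\to0$ as $t\to\infty$, so there is $T=T(s,\mathcal{B})$ beyond which this term is $\leq1$, which yields \eqref{ue}.

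The main obstacle is the rigorous justification of the It\^o formula for $\|\u\|_{\H}^2$, and hence for its square, in the Gelfand-triple setting $\V\cap\widetilde{\L}^{r+1}\hookrightarrow\H\hookrightarrow\V'+\widetilde{\L}^{(r+1)/r}$, together with the simultaneous bookkeeping of the drift correction and the quadratic-variation term arising from the fourth power; once these are in place, the a priori regularity in Proposition \ref{MRDS} guarantees that every term above is integrable and that the martingale has zero expectation. The delicate quantitative point is the joint choice of $\varepsilon_0$: it must be small enough to preserve the decay rate exactly $\mu\lambda_1$ and, at the same time, to force the constant noise term below $\frac{\mu\lambda_1}{8}$.
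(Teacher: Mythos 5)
Your proposal is correct and follows essentially the same route as the paper's proof: It\^o's formula applied twice to obtain the dynamics of $\|\u\|_{\H}^4$, taking expectation to kill the (local) martingale, Young's inequality on the forcing term producing exactly the constant $\frac{4}{\mu^3\lambda_1}$, Hypothesis (H.2) with a smallness condition on $\varepsilon$ for the two noise contributions, the Poincar\'e inequality \eqref{poin} to extract the decay rate $\mu\lambda_1$, variation of constants over $(s-t,s)$, and the defining property \eqref{classB} of $\mathfrak{D}$ to dominate the initial-data term by $1$ for $t\geq T(s,\mathcal{B})$. The only difference is bookkeeping: the paper fixes $\varepsilon_0=\sqrt{\mu\lambda_1/(12K)}$ so that the constant noise terms sum to exactly $\frac{\mu\lambda_1}{8}$, whereas you carry a generic $C\varepsilon^2$ and impose a second smallness restriction to force $\frac{C\varepsilon^2}{\mu\lambda_1}\leq\frac{\mu\lambda_1}{8}$ --- both yield \eqref{ue}.
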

	\begin{proof}
		Applying the infinite dimensional It\^o formula (cf. \cite{GK1,Me}) to the process $\|\u(\cdot)\|^2_{\H}$, we find
		\begin{align}\label{ue1}
		\d\|\u(\xi)\|_{\H}^2=&\big(-2\mu\|\u(\xi)\|^2_{\V}-2\alpha\|\u(\xi)\|^2_{\H}-2\beta\|\u(\xi)\|^{r+1}_{\widetilde{\L}^{r+1}}+2\langle\f(\xi),\u(\xi)\rangle\nonumber\\&\quad+\varepsilon^2\|\upsigma(\xi,\u(\xi))\|^2_{\mathcal{L}_2(\H_0,\H)}\big)\d\xi +2\varepsilon\big(\u(\xi),\upsigma(\xi,\u(\xi))\d\W(\xi)\big).
		\end{align}
		Again by It\^o's formula and \eqref{ue1}, we get
		\begin{align}\label{ue2}
		\d\|\u(\xi)\|^4_{\H}=&2\|\u(\xi)\|^2_{\H}\big(-2\mu\|\u(\xi)\|^2_{\V}-2\alpha\|\u(\xi)\|^2_{\H}-2\beta\|\u(\xi)\|^{r+1}_{\widetilde{\L}^{r+1}}+2\langle\f(\xi),\u(\xi)\rangle\nonumber\\&\quad+\varepsilon^2\|\upsigma(\xi,\u(\xi))\|^2_{\mathcal{L}_2(\H_0,\H)}\big)\d\xi +4\varepsilon^2\|\upsigma^*(\xi,\u(\xi))\u(\xi)\|^2_{\H_0}\d\xi \nonumber\\&\quad + 4\varepsilon\|\u(\xi)\|^2_{\H}\big(\u(\xi),\upsigma(\xi,\u(\xi))\d\W(\xi)\big),
		\end{align}
		where $\upsigma^*$ is the adjoint of the operator of $\upsigma$. Taking the expectation in \eqref{ue2},  for a.e. $\xi\geq s-t,$ we obtain
		\begin{align}\label{ue3}
		&	\frac{\d}{\d\xi}\E\left[\|\u(\xi,s-t,\u_0)\|^4_{\H}\right]+\E\bigg[4\mu\|\u(\xi,s-t,\u_0)\|^2_{\H}\|\u(\xi,s-t,\u_0)\|^2_{\V}+4\alpha\|\u(\xi,s-t,\u_0)\|^4_{\H}\nonumber\\& \quad\quad+ 4\beta\|\u(\xi,s-t,\u_0)\|^2_{\H}\|\u(\xi,s-t,\u_0)\|^{r+1}_{\widetilde{\L}^{r+1}} \bigg]\nonumber\\&= \E\bigg[4\|\u(\xi,s-t,\u_0)\|^2_{\H}\langle\f(\xi),\u(\xi,s-t,\u_0)\rangle+4\varepsilon^2\|\upsigma^*(\xi,\u(\xi,s-t,\u_0))\u(\xi,s-t,\u_0)\|^2_{\H_0}\nonumber\\&\quad+2\varepsilon^2\|\u(\xi,s-t,\u_0)\|^2_{\H}\|\upsigma(\xi,\u(\xi,s-t,\u_0))\|^2_{\mathcal{L}_2(\H_0,\H)}\bigg],
		\end{align}
		where we used the fact $\int_{s-t}^{\xi}\|\u(\tau)\|^2_{\H}\big(\u(\tau),\upsigma(\tau,\u(\tau))\d\W(\tau)\big)$  is a local martingale. Using H\"oler's and Young's inequalities, we have 
		\begin{align}\label{ue4}
		&	4\|\u(\xi,s-t,\u_0)\|^2_{\H}|\langle\f(\xi),\u(\xi,s-t,\u_0)\rangle|\nonumber\\&\leq4\|\u(\xi,s-t,\u_0)\|^2_{\H}\|\f(\xi)\|_{\V'}\|\u(\xi,s-t,\u_0)\|_{\V}\nonumber\\&\leq \mu\|\u(\xi,s-t,\u_0)\|^2_{\H}\|\u(\xi,s-t,\u_0)\|^2_{\V}+\frac{4}{\mu}\|\f(\xi)\|^2_{\V'}\|\u(\xi,s-t,\u_0)\|^2_{\H}\nonumber\\&\leq \mu\|\u(\xi,s-t,\u_0)\|^2_{\H}\|\u(\xi,s-t,\u_0)\|^2_{\V}+\mu\lambda_1\|\u(\xi,s-t,\u_0)\|^4_{\H}+\frac{4}{\mu^3\lambda_1}\|\f(\xi)\|^4_{\V'}.
		\end{align}
		Let us fix $\varepsilon_0=\sqrt{\frac{\mu\lambda_1}{12K}}$. Using condition (H.2) from Hypothesis \ref{sigmaH}, for $0<\varepsilon\leq\varepsilon_0,$ we get 
		\begin{align}\label{ue5}
		&2\varepsilon^2\|\u(\xi,s-t,\u_0)\|^2_{\H}\|\upsigma(\xi,\u(\xi,s-t,\u_0))\|^2_{\mathcal{L}_2(\H_0,\H)}\nonumber\\&\leq2\varepsilon^2K\bigg[\|\u(\xi,s-t,\u_0)\|^2_{\H}+\|\u(\xi,s-t,\u_0)\|^4_{\H}\bigg]\nonumber\\&\leq2\varepsilon^2K\bigg[\frac{1}{4} + 	2\|\u(\xi,s-t,\u_0)\|^4_{\H}\bigg]\nonumber\\&\leq\frac{\mu\lambda_1}{24} + \frac{\mu\lambda_1}{3}\|\u(\xi,s-t,\u_0)\|^4_{\H}.
		\end{align}
		Using \eqref{ue5},  for all $0<\varepsilon\leq\varepsilon_0,$ we obtain
		\begin{align}\label{ue6}
		&4\varepsilon^2\|\upsigma^*(\xi,\u(\xi,s-t,\u_0))\u(\xi,s-t,\u_0)\|^2_{\H_0}\nonumber\\&\leq 4\varepsilon^2\|\upsigma^*(\xi,\u(\xi,s-t,\u_0))\|^2_{\mathcal{L}_2(\H,\H_0)}\|\u(\xi,s-t,\u_0)\|^2_{\H}\nonumber\\&\leq \frac{\mu\lambda_1}{12} + \frac{2\mu\lambda_1}{3}\|\u(\xi,s-t,\u_0)\|^4_{\H}.
		\end{align}
		Combining  \eqref{ue4}-\eqref{ue6} and substituting it in \eqref{ue3}, we deduce that 
		\begin{align*}
		&\frac{\d}{\d\xi}\E\left[\|\u(\xi,s-t,\u_0)\|^4_{\H}\right]+\E\bigg[3\mu\|\u(\xi,s-t,\u_0)\|^2_{\H}\|\u(\xi,s-t,\u_0)\|^2_{\V}\nonumber\\&\quad+(4\alpha-2\mu\lambda_1)\|\u(\xi,s-t,\u_0)\|^4_{\H} + 4\beta\|\u(\xi,s-t,\u_0)\|^2_{\H}\|\u(\xi,s-t,\u_0)\|^{r+1}_{\widetilde{\L}^{r+1}} \bigg]\nonumber\\&\leq \frac{\mu\lambda_1}{8} +\frac{4}{\mu^3\lambda_1}\|\f(\xi)\|^4_{\V'}. \nonumber
		\end{align*}
		for all $0<\varepsilon\leq\varepsilon_0$ and a.e. $\xi\geq s-t$. Using the Poincar\'e inequality \eqref{poin}, we find 
		\begin{align}\label{ue7}
		\frac{\d}{\d\xi}\E\left[\|\u(\xi,s-t,\u_0)\|^4_{\H}\right]+\mu\lambda_1\E\left[\|\u(\xi,s-t,\u_0)\|^4_{\H}\right]\leq \frac{\mu\lambda_1}{8} +\frac{4}{\mu^3\lambda_1}\|\f(\xi)\|^4_{\V'}.
		\end{align}
		Multiplying \eqref{ue7} by $e^{\mu\lambda_1\xi}$ and then integrating on $(s-t,s)$ with $t\geq0$, we arrive at 
		\begin{align}\label{ue8}
		\quad\E\left[\|\u(s,s-t,\u_0)\|^4_{\H}\right] \leq e^{-\mu\lambda_1 t}\E\left[\|\u_0\|^4_{\H}\right]+ \frac{\mu\lambda_1}{8} +\frac{4}{\mu^3\lambda_1}e^{-\mu\lambda_1 s}\int_{s-t}^{s}e^{\mu\lambda_1\tau}\|\f(\tau)\|^4_{\V'}\d\tau. 
		\end{align}
		Since $\u_0\in \mathcal{B}(s-t)$ and $\mathcal{B}\in \mathfrak{D}$, we get
		\begin{align}\label{ue9}
		e^{-\mu\lambda_1 t} \E\left[\|\u_0\|^4_{\H}\right]&=e^{-\mu\lambda_1 s}e^{\mu\lambda_1(s-t)} \E\left[\|\u_0\|^4_{\H}\right]\leq e^{-\mu\lambda_1 s}e^{\mu\lambda_1(s-t)} \|\mathcal{B}(s-t)\|^4_{\mathrm{L}^4(\Omega,\mathscr{F}_{s-t};\H)}\nonumber\\&\to0 \text{ as } t\to \infty.
		\end{align}
		Therefore, there exists $T=T(s,\mathcal{B})>0$ such that  $e^{-\mu\lambda_1 t} \E\left[\|\u_0\|^4_{\H}\right]\leq 1,$ for all $t\geq T$. By \eqref{ue8}, for $t\geq T$, we obtain
		\begin{align}\label{ue10}
		\E\left[\|\u(s,s-t,\u_0)\|^4_{\H}\right] \leq 1+  \frac{\mu\lambda_1}{8} +\frac{4}{\mu^3\lambda_1}e^{-\mu\lambda_1 s}\int_{-\infty}^{s}e^{\mu\lambda_1\tau}\|\f(\tau)\|^4_{\V'}\d\tau,
		\end{align}which completes the proof.
	\end{proof}
	Next, we prove the main result of this section, that is, the existence of weak $\mathfrak{D}$-pullback mean random attractors for $\Phi_1$.
	\begin{theorem}\label{WPMRA1}
		Suppose that Hypothesis \ref{f_H} and all the conditions of Hypothesis \ref{sigmaH} are satisfied. Then, there exists $\varepsilon_0>0$ such that for every $0<\varepsilon\leq\varepsilon_0$, the mean random dynamical system $\Phi_1$ for the system \eqref{S-CBF} has a unique weak $\mathfrak{D}$-pullback mean random attractor $\mathcal{A}=\{\mathcal{A}(s):s\in\R\}\in \mathfrak{D}$ in $\mathrm{L}^4(\Omega,\mathscr{F};\H)$ over $(\Omega,\mathscr{F},\{\mathscr{F}_t\}_{t\in\R},\mathbb{P}).$  
	\end{theorem}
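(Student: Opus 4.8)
The plan is to deduce the conclusion directly from the abstract existence theorem, Theorem \ref{Main-T}, once all of its hypotheses have been supplied. The ambient space is $\mathrm{X}=\H$, which, being a Hilbert space, is reflexive, and we take the exponent $p=4\in(1,\infty)$. The collection $\mathfrak{D}$ is inclusion-closed: if $\mathcal{B}\in\mathfrak{D}$ satisfies \eqref{classB} and $\widetilde{\mathcal{B}}(s)\subseteq\mathcal{B}(s)$ for every $s$, then $\|\widetilde{\mathcal{B}}(s)\|_{\mathrm{L}^4(\Omega,\mathscr{F}_s;\H)}\leq\|\mathcal{B}(s)\|_{\mathrm{L}^4(\Omega,\mathscr{F}_s;\H)}$, so $\widetilde{\mathcal{B}}$ also obeys \eqref{classB}. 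Since $\Phi_1$ has already been shown to be a mean random dynamical system on $\mathrm{L}^4(\Omega,\mathscr{F};\H)$, the only remaining task is to exhibit a weakly compact $\mathfrak{D}$-pullback absorbing set.

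For this I would read off the limiting radius appearing in Lemma \ref{absorb}. Define
\[
R(s):=1+\frac{\mu\lambda_1}{8}+\frac{4}{\mu^3\lambda_1}e^{-\mu\lambda_1 s}\int_{-\infty}^{s}e^{\mu\lambda_1\tau}\|\f(\tau)\|^4_{\V'}\d\tau,
\]
which is finite for each $s\in\R$ by \eqref{f_H1}, and set
\[
\mathrm{K}(s):=\Big\{\psi\in\mathrm{L}^4(\Omega,\mathscr{F}_s;\H):\E\big[\|\psi\|^4_{\H}\big]\leq R(s)\Big\},
\]
the closed ball of radius $R(s)^{1/4}$ in $\mathrm{L}^4(\Omega,\mathscr{F}_s;\H)$, together with $\mathrm{K}=\{\mathrm{K}(s):s\in\R\}$. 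The absorbing property is then immediate: since $\Phi_1(t,s-t)(\u_0)=\u(s,s-t,\u_0)$, Lemma \ref{absorb} provides, for each $s\in\R$ and each $\mathcal{B}\in\mathfrak{D}$, a time $T=T(s,\mathcal{B})>0$ such that $\E[\|\u(s,s-t,\u_0)\|^4_{\H}]\leq R(s)$ for all $t\geq T$ and $\u_0\in\mathcal{B}(s-t)$, which is precisely $\Phi_1(t,s-t)(\mathcal{B}(s-t))\subseteq\mathrm{K}(s)$.

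Next I would check that $\mathrm{K}\in\mathfrak{D}$, i.e. that it satisfies \eqref{classB}. Because $\|\mathrm{K}(s)\|^4_{\mathrm{L}^4(\Omega,\mathscr{F}_s;\H)}=R(s)$, we have
\[
e^{\mu\lambda_1 s}\|\mathrm{K}(s)\|^4_{\mathrm{L}^4(\Omega,\mathscr{F}_s;\H)}=e^{\mu\lambda_1 s}\Big(1+\frac{\mu\lambda_1}{8}\Big)+\frac{4}{\mu^3\lambda_1}\int_{-\infty}^{s}e^{\mu\lambda_1\tau}\|\f(\tau)\|^4_{\V'}\d\tau,
\]
and each term on the right tends to $0$ as $s\to-\infty$: the first since $\mu\lambda_1>0$, the second by \eqref{f_H2}.

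The last and most delicate point is the weak compactness of $\mathrm{K}(s)$ in $\mathrm{L}^4(\Omega,\mathscr{F}_s;\H)$ for each $s$. Since $\H$ is reflexive and $4\in(1,\infty)$, the Bochner space $\mathrm{L}^4(\Omega,\mathscr{F}_s;\H)$ is reflexive. The set $\mathrm{K}(s)$ is norm-closed (as a sublevel set of the norm) and convex, hence weakly closed by Mazur's theorem, and it is norm-bounded; by Kakutani's characterization of reflexivity its closed balls are weakly compact, so $\mathrm{K}(s)$ is weakly compact. This is the step that genuinely exploits reflexivity, and where one must take care that the (generally non-metrizable) weak topology is handled through the abstract framework rather than by sequences. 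Having verified all the hypotheses, I would invoke Theorem \ref{Main-T} to obtain the unique weak $\mathfrak{D}$-pullback mean random attractor $\mathcal{A}=\{\mathcal{A}(s):s\in\R\}\in\mathfrak{D}$, along with its explicit $\Omega$-limit characterization.
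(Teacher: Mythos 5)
Your proposal is correct and follows essentially the same route as the paper: define the absorbing family $\mathcal{K}(s)$ as the closed ball of radius $R(s)^{1/4}$ read off from Lemma \ref{absorb}, verify membership in $\mathfrak{D}$ via Hypothesis \ref{f_H}, obtain weak compactness from closedness, convexity and boundedness in the reflexive space $\mathrm{L}^4(\Omega,\mathscr{F}_s;\H)$, and invoke Theorem \ref{Main-T}. Your explicit checks of inclusion-closedness of $\mathfrak{D}$ and of the Mazur/Kakutani argument are details the paper leaves implicit, but they do not change the approach.
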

	\begin{proof}
		For a given $s\in\R$, let us define
		\begin{align*}
		\mathcal{K}(s) :=\left\{\u\in\mathrm{L}^4(\Omega,\mathscr{F}_s;\H):\E\left[\|\u\|^4_{\H}\right]\leq R(s)\right\},
		\end{align*}
		where $$R(s)=1+ \frac{\mu\lambda_1}{8} +\frac{4}{\mu^3\lambda_1}e^{-\mu\lambda_1 s}\int_{-\infty}^{s}e^{\mu\lambda_1\tau}\|\f(\tau)\|^4_{\V'}\d\tau.$$
		Since $\mathcal{K}(s)$ is a bounded closed convex subset of the reflexive Banach space $\mathrm{L}^4(\Omega,\mathscr{F}_s;\H)$, we infer that $\mathcal{K}(s)$ is weakly compact in $\mathrm{L}^4(\Omega,\mathscr{F}_s;\H)$. By Hypothesis \ref{f_H}, we also obtain 
		\begin{align*}
		&	\lim_{s\to-\infty}e^{\mu\lambda_1 s}\|\mathcal{K}(s)\|^4_{\mathrm{L}^4(\Omega,\mathscr{F}_s;\H)}\\&=\lim_{s\to-\infty}e^{\mu\lambda_1 s}R(s)=\lim_{s\to-\infty}\bigg[\bigg(1+\frac{\mu\lambda_1}{8}\bigg)e^{\mu\lambda_1 s} +\frac{4}{\mu^3\lambda_1}\int_{-\infty}^{s}e^{\mu\lambda_1\tau}\|\f(\tau)\|^4_{\V'}\d\tau\bigg]=0,
		\end{align*}
		which implies that $\mathcal{K}=\{\mathcal{K}(s):s\in\R\}\in\mathfrak{D}$. Hence, by Lemma \ref{absorb}, we infer that $\mathcal{K}$ is a weakly compact $\mathfrak{D}$-pullback absorbing set for $\Phi_1$. Making use of Theorem \ref{Main-T}, we immediately conclude that there exists a unique weak $\mathfrak{D}$-pullback mean random attractor $\mathcal{A}\in\mathfrak{D}$ of $\Phi_1$.
	\end{proof}

	\section{Weak Pullback Mean Random Attractors for $r>3$ ($n=2,3$) and for $r=3$ ($n=3$ and $2\beta\mu\geq1$)}\label{sec5}\setcounter{equation}{0}
	This section is devoted for finding the mean random dynamical system for the nonautonomous SCBF equations \eqref{1} over a filtered probability space for $r>3$ ($n=2,3$) and for $r=3$ ($n=3$ and $2\beta\mu\geq1$), and obtaining  the existence and uniqueness of weak pullback mean random attractors. Next, we discuss about the solvability result of the system \eqref{S-CBF}.
	\begin{definition}
		For $r>3$ ($n=2,3$) and for $r=3$ ($n=3$ and $2\beta\mu\geq1$), let $s\in\R$ and $\u_0\in \mathrm{L}^2(\Omega, \mathscr{F}_s;\H).$ Then, an $\H$-valued $\{\mathscr{F}_t\}_{t\in\R}$-adapted stochastic process $\{\u(t)\}_{t\in[s,\infty)}$ is called a strong solution of the system  \eqref{S-CBF} on $[s,\infty)$ with initial data $\u_0$ if $$\u\in\mathrm{C}([s,\infty);\H)\cap\mathrm{L}^2_{\mathrm{loc}}((s,\infty);\V)\cap\mathrm{L}^{r+1}_{\mathrm{loc}}((s,\infty);\widetilde{\L}^{r+1}), \  \mathbb{P}\text{-a.s.},$$ and satisfies, for every $t>s$ and $\v\in\V\cap\widetilde{\L}^{r+1}$,
		\begin{align*}
		(\u(t),\v)
		&	+ \int_{s}^{t} \langle\mu\A\u(\tau)+ \B\u(\tau)+\alpha\u(\tau)+\beta\mathcal{C}(\u(\tau)),\v\rangle\d \tau \\&= (\u_0,\v)+ \int_{s}^{t}\langle\f(\tau), \v\rangle\d\tau +\varepsilon\int_{s}^{t} (\v, \upsigma(\tau,\u(\tau))\d\W(\tau)),
		\end{align*}$\mathbb{P}$-a.s., and  $\u(\cdot)$ satisfies the following It\^o formula (energy equality): 
		\begin{align}\label{41}
		&\|\u(t)\|_{\H}^2+2\mu\int_s^t\|\u(\tau)\|_{\V}^2\d\tau+2\alpha\int_s^t\|\u(\tau)\|_{\H}^2\d\tau+2\beta\int_s^t\|\u(\tau)\|_{\wi\L^{r+1}}^{r+1}\d\tau\nonumber\\&=\|\u_0\|_{\H}^2+\int_s^t\langle\f(\tau),\u(\tau)\rangle\d\tau+\frac{\e^2}{2}\int_s^{t}\|\upsigma(\tau,\u(\tau))\|_{\mathcal{L}_2(\H_0,\H)}^2+\e\int_s^t(\u(\tau),\upsigma(\tau,\u)\d\W(\tau)),
		\end{align}
		for all $t>s$,	$\mathbb{P}$-a.s. A strong solution $\u(\cdot)$ to the system \eqref{S-CBF} is called a
		\emph{pathwise  unique strong solution} if
		$\widetilde{\u}(\cdot)$ is an another strong
		solution, then $$\mathbb{P}\big\{\omega\in\Omega:\u(t)=\widetilde{\u}(t),\ \text{ for all }\ t\in[0,T]\big\}=1.$$ 
	\end{definition}
	Under the Hypothesis \ref{sigmaH} on $\upsigma(\cdot,\cdot)$, let us now provide the following result on the existence and uniqueness of strong  solutions of system \eqref{S-CBF}, which is proved in \cite{MTM} (Theorem 3.7 for $r>3$ and Theorem 3.9 for $r=3$ with $2\beta\mu\geq1$).
	\begin{proposition}[\cite{MTM}]\label{MRDS_1}
		For $r>3$ ($n=2,3$) and for $r=3$ ($n=3$ and $2\beta\mu\geq1$), let all the conditions of Hypothesis \ref{sigmaH} be satisfied. Then, there exists $\widetilde{\varepsilon}_0>0$ such that for every $\varepsilon\in(0,\widetilde{\varepsilon}_0), s\in\R$, $\u_0\in \mathrm{L}^2(\Omega,\mathscr{F}_s;\H)$ and $\f\in \mathrm{L}^2_{\mathrm{loc}}(\R,\V')$, the system \eqref{S-CBF} has a unique pathwise strong solution $$\u\in\mathrm{L}^2(\Omega; \mathrm{C}([s,s+T];\H)\cap\mathrm{L}^2((s,s+T);\V))\cap\mathrm{L}^{r+1}(\Omega;\mathrm{L}^{r+1}((s,s+T);\widetilde{\L}^{r+1})),$$ for every $T>0.$ Moreover, 
		\begin{align}\label{MRDS1_1}
		\mathbb{E}\bigg[\sup_{\tau\in[s,s+T]}\|\u(\tau)\|^2_{\H}&+ 2\mu\int_{s}^{s+T}\|\u(\tau)\|^2_{\V}\d\tau+2\alpha\int_{s}^{s+T}\|\u(\tau)\|^2_{\H}\d\tau\nonumber\\&+2\beta\int_{s}^{s+T}\|\u(\tau)\|^{r+1}_{\widetilde{\L}^{r+1}}\d\tau\bigg]\leq \widetilde{\M}(1+\mathbb{E}(\|\u_0\|^2_{\H})),
		\end{align} where $\widetilde{\M}>0$ is a constant independent of $\u_0$.
	\end{proposition}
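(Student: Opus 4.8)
The plan is to establish Proposition \ref{MRDS_1} by a finite-dimensional Galerkin approximation combined with the monotonicity of $\G$ recorded in Lemmas \ref{thm2.4} and \ref{thm2.3} and a stochastic Minty--Browder identification of the nonlinear limit. Let $\{e_k\}$ be the orthonormal eigenbasis of $\A$ and let $\mathcal{P}_n$ denote the orthogonal projection of $\H$ onto $\H_n=\mathrm{span}\{e_1,\dots,e_n\}$. Projecting \eqref{S-CBF} onto $\H_n$ gives a finite-dimensional It\^o SDE with continuous, locally Lipschitz coefficients, which admits a unique $\H_n$-valued solution $\u^n$ up to a maximal existence time; the a priori bounds below exclude explosion, so each $\u^n$ is global.

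First I would derive uniform a priori estimates. Applying the finite-dimensional It\^o formula to $\|\u^n(\cdot)\|_{\H}^2$ and using $\langle\B(\u^n),\u^n\rangle=0$ from \eqref{b0} together with $\langle\mathcal{C}(\u^n),\u^n\rangle=\|\u^n\|_{\widetilde{\L}^{r+1}}^{r+1}$ produces the dissipation $2\mu\|\u^n\|_{\V}^2+2\alpha\|\u^n\|_{\H}^2+2\beta\|\u^n\|_{\widetilde{\L}^{r+1}}^{r+1}$; the forcing is absorbed by Young's inequality in the $\V'$--$\V$ duality, and the It\^o correction $\tfrac{\e^2}{2}\|\upsigma(\cdot,\u^n)\|_{\mathcal{L}_2(\H_0,\H)}^2$ is controlled by the growth condition (H.2). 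Choosing $\e$ sufficiently small, which fixes $\widetilde{\e}_0$, ensures the $\e^2K\|\u^n\|_{\H}^2$ contribution is dominated by the dissipation; the Burkholder--Davis--Gundy inequality on the stochastic integral and Gronwall's lemma then yield \eqref{MRDS1_1} uniformly in $n$. Consequently $\u^n$ is bounded in $\mathrm{L}^2(\Omega;\mathrm{L}^\infty(s,s+T;\H))\cap\mathrm{L}^2(\Omega;\mathrm{L}^2(s,s+T;\V))\cap\mathrm{L}^{r+1}(\Omega;\mathrm{L}^{r+1}(s,s+T;\widetilde{\L}^{r+1}))$, while $\G(\u^n)$ is bounded in $\mathrm{L}^{(r+1)/r}(\Omega\times(s,s+T);\V'+\widetilde{\L}^{(r+1)/r})$ and $\upsigma(\cdot,\u^n)$ in $\mathrm{L}^2(\Omega\times(s,s+T);\mathcal{L}_2(\H_0,\H))$.

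Next I would extract, along a subsequence, weak and weak-$*$ limits $\u$, $\mathcal{G}_0$ and $\Sigma$ of $\u^n$, $\G(\u^n)$ and $\upsigma(\cdot,\u^n)$, and pass to the limit in the projected equation to obtain $\d\u+\{\mathcal{G}_0+\alpha\u-\f\}\d t=\e\Sigma\d\W$. The identification $\mathcal{G}_0=\G(\u)$ is where I expect the main obstacle. Since Lemma \ref{thm2.4} gives monotonicity of $\G$ up to the fixed constant shift $\eta_2\|\cdot\|_{\H}^2$, and Lemma \ref{thm2.3} gives genuine global monotonicity in the case $n=r=3$, $2\beta\mu\geq1$, I would apply the It\^o formula to $e^{-\kappa t}\E\|\u(t)\|_{\H}^2$ with $\kappa>2\eta_2+\e^2L$, compare it with the weakly lower semicontinuous limit of the same quantity along $\u^n$, and run the Minty--Browder argument: for arbitrary $\v\in\mathrm{L}^{r+1}(\Omega\times(s,s+T);\V\cap\widetilde{\L}^{r+1})$ one derives $\E\int_s^t e^{-\kappa\tau}\langle\mathcal{G}_0-\G(\v),\u-\v\rangle\d\tau\geq0$, and then setting $\v=\u-\lambda\w$ and letting $\lambda\downarrow0$ forces $\mathcal{G}_0=\G(\u)$; the limit $\Sigma=\upsigma(\cdot,\u)$ follows from (H.3) once strong convergence of $\u^n$ in $\mathrm{L}^2(\Omega\times(s,s+T);\H)$ is obtained from the same energy comparison.

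Finally, for uniqueness the difference $\w=\u_1-\u_2$ of two solutions with the same data satisfies, after the It\^o formula for $\|\w\|_{\H}^2$, an inequality in which $\langle\G(\u_1)-\G(\u_2),\w\rangle$ is bounded below by $-\eta_2\|\w\|_{\H}^2$ via Lemma \ref{thm2.4} (or by $0$ via Lemma \ref{thm2.3}) and $\|\upsigma(\cdot,\u_1)-\upsigma(\cdot,\u_2)\|_{\mathcal{L}_2(\H_0,\H)}^2\leq L\|\w\|_{\H}^2$ by (H.3); Gronwall's lemma then gives $\frac{\d}{\d t}\E\|\w\|_{\H}^2\leq(2\eta_2+\e^2L)\E\|\w\|_{\H}^2$ and hence $\w\equiv0$. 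The It\^o energy equality \eqref{41} for the limit $\u$ is justified because $\u$ lies in $\mathrm{L}^2(\Omega;\mathrm{L}^2(s,s+T;\V))\cap\mathrm{L}^{r+1}(\Omega;\mathrm{L}^{r+1}(s,s+T;\widetilde{\L}^{r+1}))$ with weak derivative in the corresponding dual space, so the It\^o formula for the Gelfand-triple pairing applies; taking expectations and invoking weak lower semicontinuity of the norms upgrades the Galerkin bounds to the estimate \eqref{MRDS1_1} for $\u$.
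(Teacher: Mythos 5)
The paper does not prove Proposition \ref{MRDS_1} itself: it is quoted from \cite{MTM} (Theorems 3.7 and 3.9 there), whose proof is exactly the route you describe --- Galerkin approximation, energy estimates via Burkholder--Davis--Gundy and Gronwall, and a stochastic Minty--Browder identification exploiting the shifted monotonicity of Lemmas \ref{thm2.4} and \ref{thm2.3} with an exponential weight $e^{-\kappa t}$ whose exponent absorbs both $\eta_2$ and the noise Lipschitz constant --- so your proposal is essentially the same argument as the source. One cosmetic remark: the diffusion limit $\Sigma=\upsigma(\cdot,\u)$ is identified directly by taking $\v=\u$ in the weighted monotonicity inequality, which already carries the term $\e^2\|\Sigma-\upsigma(\cdot,\v)\|^2_{\mathcal{L}_2(\H_0,\H)}$ (this is why $\kappa>2\eta_2+\e^2L$ is chosen), so no separate strong convergence of $\u^n$ in $\mathrm{L}^2(\Omega\times(s,s+T);\H)$ is needed, and invoking it as a prerequisite would risk circularity.
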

	It should be noted that $\u\in\mathrm{C}([s,\infty),\H),$ $\mathbb{P}$-a.s. Therefore, using \eqref{MRDS1_1} and the Lebesgue dominated convergence theorem, we deduce that $\u\in\mathrm{C}([s,\infty),\mathrm{L}^2(\Omega,\mathscr{F};\H)),$ which  help us to define a mean random dynamical system for \eqref{S-CBF}. Given $t\in\R^+$ and $s\in\R,$ let $\Phi_2(t,s)$ be a mapping from $\mathrm{L}^2(\Omega,\mathscr{F}_s;\H)$ to $\mathrm{L}^2(\Omega,\mathscr{F}_{s+t};\H)$ given by $$\Phi_2(t,s)(\u_0)=\u(t+s,s,\u_0),$$ where $\u_0\in \mathrm{L}^2(\Omega,\mathscr{F}_s;\H)$, and $\u(\cdot)$ is the unique pathwise strong solution to the system \eqref{S-CBF}. Since the strong solution of system \eqref{S-CBF} is unique, we deduce  that for every $t_1,t_2\geq0$ and $s\in\R$, $$\Phi_2(t_1+t_2,s)=\Phi_2(t_1,t_2+s)\circ\Phi_2(t_2,s).$$ Consequently, $\Phi_2$ is a mean random dynamical system on $\mathrm{L}^2(\Omega,\mathscr{F};\H)$ over $(\Omega, \mathscr{F},\{\mathscr{F}_t\}_{t\in\R},\mathbb{P})$ in the sense of Definition \ref{def_MRDS}.
	
	Let $\widetilde{\mathcal{B}}=\{\widetilde{\mathcal{B}}(s)\subseteq\mathrm{L}^2(\Omega,\mathscr{F}_s;\H):s\in\R\}$ be a family of nonempty bounded sets such that \begin{align}\label{classB_1}
	\lim_{s\to-\infty}e^{\mu\lambda_1 s} \|\widetilde{\mathcal{B}}(s)\|^2_{\mathrm{L}^2(\Omega,\mathscr{F}_s;\H)}=0,
	\end{align} where $$\|\widetilde{\mathcal{B}}(s)\|_{\mathrm{L}^2(\Omega,\mathscr{F}_s;\H)}=\sup_{\u\in\widetilde{\mathcal{B}}(s)}\|\u\|_{\mathrm{L}^2(\Omega,\mathscr{F}_s;\H)}.$$
	Let $\widetilde{\mathfrak{D}}$ be the collection of all families of nonempty bounded sets with the property \eqref{classB_1}:
	\begin{align}
	\widetilde{\mathfrak{D}}=\{\widetilde{\mathcal{B}}=\{\widetilde{\mathcal{B}}\subseteq\mathrm{L}^2(\Omega,\mathscr{F}_s;\H):\widetilde{\mathcal{B}}\neq\emptyset \text{ bounded}, s\in\R\}:\widetilde{\mathcal{B}} \text{ satisfies } \eqref{classB_1}\}.
	\end{align}
	We need the following assumption on $\f(\cdot)$ to prove the existence and uniqueness of weak $\widetilde{\mathfrak{D}}$-pullback mean random attractors of $\Phi_2$.
	\begin{hypothesis}\label{f_H_1}
		The deterministic forcing term $\f$ satisfies
		\begin{align}\label{f_H1_1}
		\int_{-\infty}^{s} e^{\mu\lambda_1\tau}\|\f(\tau)\|^2_{\V'} \d\tau<\infty,\  \text{ for all } \ s\in\R.
		\end{align}
		From \eqref{f_H1}, it is immediate that 
		\begin{align}\label{f_H2_1}
		\lim_{s\to-\infty} 	\int_{-\infty}^{s} e^{\mu\lambda_1\tau}\|\f(\tau)\|^2_{\V'} \d\tau=0.
		\end{align}
	\end{hypothesis}

	\subsection{Weak $\widetilde{\mathfrak{D}}$-pullback mean random attractors}
	In this subsection, we prove the existence and uniqueness of weak $\widetilde{\mathfrak{D}}$-pullback mean random attractors for the system \eqref{S-CBF} in $\mathrm{L}^2(\Omega,\mathscr{F};\H)$ over $(\Omega,\mathscr{F},\{\mathscr{F}_t\}_{t\in\R},\mathbb{P}).$
	\begin{lemma}\label{absorb_1}
		For $r>3$ ($n=2,3$) and for $r=3$ ($n=3$ and $2\beta\mu\geq1$), assume that \eqref{f_H1_1} and all the conditions of Hypothesis \ref{sigmaH} are satisfied. Then, there exists $\widetilde{\varepsilon}_0>0$ such that for every $0<\varepsilon\leq\widetilde{\varepsilon}_0$ and for every $s\in\R$ and $\widetilde{\mathcal{B}}=\{\widetilde{\mathcal{B}}(t)\}_{t\in\R}\in \widetilde{\mathfrak{D}},$ there exists $T=T(s,\widetilde{\mathcal{B}})>0$ such that for all $t\geq T,$ the strong solution $\u(\cdot)$ of system \eqref{S-CBF} satisfies  
		\begin{align}\label{ue_1}
		\mathbb{E}\left[\|\u(s,s-t,\u_0)\|_{\H}^2\right]\leq 1 +\frac{\mu\lambda_1}{2} +\frac{2}{\mu}e^{-\mu\lambda_1 s}\int_{-\infty}^{s}e^{\mu\lambda_1\tau}\|\f(\tau)\|^2_{\V'}\d\tau,
		\end{align} where $\u_0\in\widetilde{\mathcal{B}}(s-t)$.
	\end{lemma}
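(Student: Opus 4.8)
The plan is to mirror the proof of Lemma~\ref{absorb}, but one power lower: since the class $\widetilde{\mathfrak{D}}$ and the target estimate \eqref{ue_1} are phrased in the $\mathrm{L}^2(\Omega,\mathscr{F}_s;\H)$-norm, I would work directly with the It\^o formula for $\|\u(\cdot)\|_{\H}^2$ (equivalently the energy equality \eqref{41}) rather than with the $\|\u(\cdot)\|_{\H}^4$ formula. Note that the monotonicity Lemmas~\ref{thm2.4}--\ref{thm2.3} play no role here; they enter only in the well-posedness Proposition~\ref{MRDS_1}. The coercivity already built into the energy identity is all the absorbing estimate needs, which is why a single computation covers both $r>3$ ($n=2,3$) and $r=3$ ($n=3$, $2\beta\mu\ge1$) simultaneously.

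First I would take expectation in the It\^o differential of $\|\u(\cdot)\|_{\H}^2$, which has exactly the form displayed in \eqref{ue1} since the drift and diffusion structures of \eqref{S-CBF} are the same in the present range of $r$. Invoking the a priori bound \eqref{MRDS1_1} together with a standard stopping-time localization, the stochastic integral $\e\int_{s-t}^{\xi}(\u(\tau),\upsigma(\tau,\u(\tau))\d\W(\tau))$ is a genuine (zero-mean) martingale on each bounded interval, so that for a.e. $\xi\ge s-t$,
\begin{align*}
\frac{\d}{\d\xi}\E\big[\|\u(\xi)\|_{\H}^2\big] &+ \E\big[2\mu\|\u(\xi)\|_{\V}^2+2\alpha\|\u(\xi)\|_{\H}^2+2\beta\|\u(\xi)\|_{\wi\L^{r+1}}^{r+1}\big] \\ &= \E\big[2\langle\f(\xi),\u(\xi)\rangle+\e^2\|\upsigma(\xi,\u(\xi))\|_{\mathcal{L}_2(\H_0,\H)}^2\big],
\end{align*}
where I suppress the dependence on $(s-t,\u_0)$ for readability.

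Next I would estimate the right-hand side, discarding the nonnegative terms $2\alpha\|\u\|_{\H}^2$ and $2\beta\|\u\|_{\wi\L^{r+1}}^{r+1}$ on the left. By Cauchy--Schwarz and Young's inequality, $2\langle\f,\u\rangle\le \frac{\mu}{2}\|\u\|_{\V}^2+\frac{2}{\mu}\|\f\|_{\V'}^2$, while (H.2) of Hypothesis~\ref{sigmaH} gives $\e^2\|\upsigma(\xi,\u)\|_{\mathcal{L}_2(\H_0,\H)}^2\le \e^2 K(1+\|\u\|_{\H}^2)$. Fixing $\widetilde{\varepsilon}_0=\sqrt{\mu\lambda_1/(2K)}$, so that $\e^2 K\le \frac{\mu\lambda_1}{2}$ for $0<\e\le\widetilde{\varepsilon}_0$, absorbing the surviving $\frac{3\mu}{2}\|\u\|_{\V}^2$ by the Poincar\'e inequality \eqref{poin}, and moving $\e^2 K\|\u\|_{\H}^2$ to the left, I arrive at the $\mathrm{L}^2$-analogue of \eqref{ue7}:
$$\frac{\d}{\d\xi}\E\big[\|\u(\xi)\|_{\H}^2\big]+\mu\lambda_1\E\big[\|\u(\xi)\|_{\H}^2\big]\le \frac{\mu\lambda_1}{2}+\frac{2}{\mu}\|\f(\xi)\|_{\V'}^2.$$

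Finally I would close exactly as in \eqref{ue8}--\eqref{ue10}: multiplying by $e^{\mu\lambda_1\xi}$, integrating over $(s-t,s)$, and using $\u_0\in\widetilde{\mathcal{B}}(s-t)$ with \eqref{classB_1} to send the initial-data contribution $e^{-\mu\lambda_1 t}\E[\|\u_0\|_{\H}^2]=e^{-\mu\lambda_1 s}e^{\mu\lambda_1(s-t)}\E[\|\u_0\|_{\H}^2]$ to $0$ as $t\to\infty$, whence it is $\le1$ for all $t\ge T(s,\widetilde{\mathcal{B}})$; extending the forcing integral to $(-\infty,s)$ is licit by \eqref{f_H1_1}, and yields \eqref{ue_1}. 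The only genuinely delicate point is the vanishing of the expected stochastic integral, i.e.\ upgrading the local martingale to a true martingale; this is dispatched by the localization argument supported by the moment bound \eqref{MRDS1_1}, just as in Lemma~\ref{absorb}. Everything else is a routine repetition of that computation with the exponent $4$ replaced by $2$.
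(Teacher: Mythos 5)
Your proposal is correct and follows the paper's own proof essentially step for step: the same It\^o/energy identity for $\|\u(\cdot)\|_{\H}^2$, the same Young-inequality splitting of $2\langle\f,\u\rangle$ with the same choice $\widetilde{\varepsilon}_0=\sqrt{\mu\lambda_1/(2K)}$ for the noise term via (H.2), the same Poincar\'e absorption leading to \eqref{ue5_1}, and the same integrating-factor plus absorbing-class argument using \eqref{classB_1} and \eqref{f_H1_1}. Your side remarks are also accurate—the monotonicity Lemmas \ref{thm2.4}--\ref{thm2.3} indeed enter only through the well-posedness result, and your explicit localization of the stochastic integral via \eqref{MRDS1_1} is, if anything, slightly more careful than the paper's ``local martingale'' shorthand.
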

	\begin{proof}
		Applying the infinite dimensional It\^o formula to the process $\|\u(\cdot)\|^2_{\H}$ (see \cite{MTM} and \eqref{41} also), we obtain
		\begin{align}\label{ue1_1}
		\d\|\u(\xi)\|_{\H}^2=&\big(-2\mu\|\u(\xi)\|^2_{\V}-2\alpha\|\u(\xi)\|^2_{\H}-2\beta\|\u(\xi)\|^{r+1}_{\widetilde{\L}^{r+1}}+2\langle\f(\xi),\u(\xi)\rangle\nonumber\\&\quad+\varepsilon^2\|\upsigma(\xi,\u(\xi))\|^2_{\mathcal{L}_2(\H_0,\H)}\big)\d\xi +2\varepsilon\big(\u(\xi),\upsigma(\xi,\u(\xi))\d\W(\xi)\big).
		\end{align}
		Taking the expectation of \eqref{ue1_1} and using the fact that $\int_{s-t}^{\xi}\big(\u(\tau),\upsigma(\tau,\u(\tau))\d\W(\tau)\big)$ is a local martingale, we obtain
		\begin{align}\label{ue2_1}
		&\frac{\d}{\d\xi}\E\left[\|\u(\xi,s-t,\u_0)\|^2_{\H}\right]+\E\bigg[2\mu\|\u(\xi,s-t,\u_0)\|^2_{\V}+2\alpha\|\u(\xi,s-t,\u_0)\|^2_{\H}\nonumber\\&\quad+ 2\beta\|\u(\xi,s-t,\u_0)\|^{r+1}_{\widetilde{\L}^{r+1}} \bigg]\nonumber\\&= \E\bigg[2\langle\f(\xi),\u(\xi,s-t,\u_0)\rangle+\varepsilon^2\|\upsigma(\xi,\u(\xi,s-t,\u_0))\|^2_{\mathcal{L}_2(\H_0,\H)}\bigg],
		\end{align}
		for a.e. $\xi\geq s-t$.	Using H\"oler's and Young's inequalities, we obtain
		\begin{align}\label{ue3_1}
		2|\langle\f(\xi),\u(\xi,s-t,\u_0)\rangle|&\leq 2\|\f(\xi)\|_{\V'}\|\u(\xi,s-t,\u_0)\|_{\V}\leq \frac{\mu}{2}\|\u(\xi,s-t,\u_0)\|^2_{\V}+\frac{2}{\mu}\|\f(\xi)\|^2_{\V'}.
		\end{align}
		Let us take $\widetilde{\varepsilon}_0=\sqrt{\frac{\mu\lambda_1}{2K}}$. Using  Hypothesis \ref{sigmaH} (H.2),  for $0<\varepsilon\leq\widetilde{\varepsilon}_0,$ we obtain
		\begin{align}\label{ue4_1}
		\varepsilon^2\|\upsigma(\xi,\u(\xi,s-t,\u_0))\|^2_{\mathcal{L}_2(\H_0,\H)}&\leq\varepsilon^2K\bigg[1+\|\u(\xi,s-t,\u_0)\|^2_{\H}\bigg]\nonumber\\&\leq\frac{\mu\lambda_1}{2} + \frac{\mu\lambda_1}{2}\|\u(\xi,s-t,\u_0)\|^2_{\H}.
		\end{align}
		Combining \eqref{ue3_1}-\eqref{ue4_1} and then substituting it in \eqref{ue2_1}, we obtain
		\begin{align*}
		&\quad	\frac{\d}{\d\xi}\E\left[\|\u(\xi,s-t,\u_0)\|^2_{\H}\right]+\E\bigg[\frac{3\mu}{2}\|\u(\xi,s-t,\u_0)\|^2_{\V}+\left(2\alpha-\frac{\mu\lambda_1}{2}\right)\|\u(\xi,s-t,\u_0)\|^2_{\H}\nonumber\\& \quad\quad+ 2\beta\|\u(\xi,s-t,\u_0)\|^{r+1}_{\widetilde{\L}^{r+1}} \bigg]\leq \frac{\mu\lambda_1}{2} +\frac{2}{\mu}\|\f(\xi)\|^2_{\V'},
		\end{align*}
		for all $0<\varepsilon\leq\widetilde{\varepsilon}_0$ and for a.e. $\xi\geq s-t$. Making use of the Poincar\'e inequality \eqref{poin}, we obtain
		\begin{align}\label{ue5_1}
		\frac{\d}{\d\xi}\E\left[\|\u(\xi,s-t,\u_0)\|^2_{\H}\right]+\mu\lambda_1\E\left[\|\u(\xi,s-t,\u_0)\|^2_{\H}\right]\leq \frac{\mu\lambda_1}{2} +\frac{2}{\mu}\|\f(\xi)\|^2_{\V'}.
		\end{align}
		Multiplying \eqref{ue5_1} by $e^{\mu\lambda_1\xi}$ and then integrating on $(s-t,s)$ with $t\geq0$, we obtain
		\begin{align}\label{ue6_1}
		\E\left[\|\u(s,s-t,\u_0)\|^2_{\H}\right] \leq e^{-\mu\lambda_1 t}\E\left[\|\u_0\|^2_{\H}\right] + \frac{\mu\lambda_1}{2} +\frac{2}{\mu}e^{-\mu\lambda_1 s}\int_{s-t}^{s}e^{\mu\lambda_1\tau}\|\f(\tau)\|^2_{\V'}\d\tau. 
		\end{align}
		Since $\u_0\in \widetilde{\mathcal{B}}(s-t)$ and $\widetilde{\mathcal{B}}\in \widetilde{\mathfrak{D}}$, we get
		\begin{align}\label{ue7_1}
		e^{-\mu\lambda_1 t} \E\left[\|\u_0\|^2_{\H}\right]&=e^{-\mu\lambda_1 s}e^{\mu\lambda_1(s-t)} \E\left[\|\u_0\|^2_{\H}\right]\leq e^{-\mu\lambda_1 s}e^{\mu\lambda_1(s-t)} \|\widetilde{\mathcal{B}}(s-t)\|^2_{\mathrm{L}^2(\Omega,\mathscr{F}_{s-t};\H)}\nonumber\\&\to 0 \ \text{ as } \ t\to \infty.
		\end{align}
		Therefore, there exists $T=T(s,\widetilde{\mathcal{B}})>0$ such that  $e^{-\mu\lambda_1 t} \E\left[\|\u_0\|^2_{\H}\right]\leq 1,$ for all $t\geq T$. By \eqref{ue6_1}, we finally obtain, 
		\begin{align}\label{ue8_1}
		\E\left[\|\u(s,s-t,\u_0)\|^2_{\H}\right] \leq 1+  \frac{\mu\lambda_1}{2} +\frac{2}{\mu}e^{-\mu\lambda_1 s}\int_{-\infty}^{s}e^{\mu\lambda_1\tau}\|\f(\tau)\|^2_{\V'}\d\tau,
		\end{align}for $t\geq T$, which completes the proof.
	\end{proof}
	Let us now prove the main result of this section, that is, the existence of weak $\widetilde{\mathfrak{D}}$-pullback mean random attractors for $\Phi_2$.
	\begin{theorem}\label{WPMRA2}
		For $r>3$ ($n=2,3$) and for $r=3$ ($n=3$ and $2\beta\mu\geq1$), suppose that Hypothesis \ref{f_H_1} and all the conditions of Hypothesis \ref{sigmaH} are satisfied. Then, there exists $\widetilde{\varepsilon}_0>0$ such that for every $0<\varepsilon\leq\widetilde{\varepsilon}_0$, the mean random dynamical system $\Phi_2$ for the system \eqref{S-CBF} has a unique weak $\widetilde{\mathfrak{D}}$-pullback mean random attractor $\widetilde{\mathcal{A}}=\{\widetilde{\mathcal{A}}(s):s\in\R\}\in \widetilde{\mathfrak{D}}$ in $\mathrm{L}^2(\Omega,\mathscr{F};\H)$ over $(\Omega,\mathscr{F},\{\mathscr{F}_t\}_{t\in\R},\mathbb{P}).$  
	\end{theorem}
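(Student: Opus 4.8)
The plan is to prove Theorem \ref{WPMRA2} in exactly the same manner as Theorem \ref{WPMRA1}, but now working in the Hilbert space $\mathrm{L}^2(\Omega,\mathscr{F}_s;\H)$ (that is, with $\mathrm{X}=\H$ and $p=2$ in Theorem \ref{Main-T}) and using the second-moment absorbing estimate of Lemma \ref{absorb_1} in place of the fourth-moment estimate. Since the well-posedness (Proposition \ref{MRDS_1}) and the uniform absorbing bound (Lemma \ref{absorb_1}) have already been established for $r>3$ ($n=2,3$) and for $r=3$ ($n=3$, $2\beta\mu\geq1$), all the analytic work is done; what remains is to verify the hypotheses of Theorem \ref{Main-T}, namely the existence of a weakly compact $\widetilde{\mathfrak{D}}$-pullback absorbing set for $\Phi_2$.

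First I would construct the candidate absorbing family. For each $s\in\R$ set
\begin{align*}
\widetilde{\mathcal{K}}(s):=\left\{\u\in\mathrm{L}^2(\Omega,\mathscr{F}_s;\H):\E\left[\|\u\|^2_{\H}\right]\leq \widetilde{R}(s)\right\},
\end{align*}
where $\widetilde{R}(s)=1+\frac{\mu\lambda_1}{2}+\frac{2}{\mu}e^{-\mu\lambda_1 s}\int_{-\infty}^{s}e^{\mu\lambda_1\tau}\|\f(\tau)\|^2_{\V'}\d\tau$ is the right-hand side of \eqref{ue_1}. Observe that $\widetilde{\mathcal{K}}(s)$ is precisely the closed ball of radius $\sqrt{\widetilde{R}(s)}$ in $\mathrm{L}^2(\Omega,\mathscr{F}_s;\H)$, hence a bounded, closed, convex subset of this space. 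Since $\H$ is a Hilbert space and $p=2$, the space $\mathrm{L}^2(\Omega,\mathscr{F}_s;\H)$ is itself Hilbert and therefore reflexive, so $\widetilde{\mathcal{K}}(s)$ is weakly compact for every $s\in\R$.

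Next I would check that $\widetilde{\mathcal{K}}=\{\widetilde{\mathcal{K}}(s):s\in\R\}$ belongs to $\widetilde{\mathfrak{D}}$. Since $\|\widetilde{\mathcal{K}}(s)\|^2_{\mathrm{L}^2(\Omega,\mathscr{F}_s;\H)}=\widetilde{R}(s)$, this amounts to verifying \eqref{classB_1}, and indeed
\begin{align*}
e^{\mu\lambda_1 s}\widetilde{R}(s)=\left(1+\frac{\mu\lambda_1}{2}\right)e^{\mu\lambda_1 s}+\frac{2}{\mu}\int_{-\infty}^{s}e^{\mu\lambda_1\tau}\|\f(\tau)\|^2_{\V'}\d\tau\To0\quad\text{as}\quad s\to-\infty,
\end{align*}
by \eqref{f_H2_1} of Hypothesis \ref{f_H_1}. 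With $\widetilde{\mathcal{K}}\in\widetilde{\mathfrak{D}}$ in hand, Lemma \ref{absorb_1} shows that for every $s\in\R$ and $\widetilde{\mathcal{B}}\in\widetilde{\mathfrak{D}}$ there is $T=T(s,\widetilde{\mathcal{B}})>0$ with $\Phi_2(t,s-t)(\widetilde{\mathcal{B}}(s-t))\subseteq\widetilde{\mathcal{K}}(s)$ for all $t\geq T$, so $\widetilde{\mathcal{K}}$ is a weakly compact $\widetilde{\mathfrak{D}}$-pullback absorbing set for $\Phi_2$. Applying Theorem \ref{Main-T} with $\mathrm{X}=\H$ and $p=2$ then yields a unique weak $\widetilde{\mathfrak{D}}$-pullback mean random attractor $\widetilde{\mathcal{A}}\in\widetilde{\mathfrak{D}}$, completing the proof.

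I expect no genuine obstacle in this final step: the entire difficulty of the $r>3$ and critical $r=3$ regimes — monotonicity of $\G$ via Lemmas \ref{thm2.4} and \ref{thm2.3}, well-posedness, and the smallness of $\widetilde{\varepsilon}_0$ needed to absorb the diffusion term — has already been dispatched in Proposition \ref{MRDS_1} and Lemma \ref{absorb_1}. The only points requiring care are the functional-analytic ones, namely that weak compactness of $\widetilde{\mathcal{K}}(s)$ relies on reflexivity (which is automatic here for $p=2$, as $\mathrm{L}^2(\Omega,\mathscr{F}_s;\H)$ is Hilbert) and that the class-membership limit uses the exponentially weighted integrability \eqref{f_H1_1} of $\f$ rather than its fourth-power analogue; both are routine given the groundwork laid above.
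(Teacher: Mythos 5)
Your proposal is correct and follows essentially the same route as the paper's own proof: the same absorbing family $\widetilde{\mathcal{K}}(s)$ with radius $\widetilde{R}(s)$, weak compactness via boundedness, closedness and convexity in the reflexive space $\mathrm{L}^2(\Omega,\mathscr{F}_s;\H)$, membership in $\widetilde{\mathfrak{D}}$ via \eqref{f_H2_1}, absorption via Lemma \ref{absorb_1}, and then Theorem \ref{Main-T}. The only cosmetic difference is that you invoke the Hilbert-space structure for $p=2$ where the paper simply cites reflexivity of the Banach space; both justifications are valid.
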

	\begin{proof}
		For a given $s\in\R$, let us define
		\begin{align*}
		\widetilde{\mathcal{K}}(s) :=\{\u\in\mathrm{L}^2(\Omega,\mathscr{F}_s;\H):\E\left[\|\u\|^2_{\H}\right]\leq \widetilde{R}(s)\},
		\end{align*}
		where $$\widetilde{R}(s)=1+ \frac{\mu\lambda_1}{2} +\frac{2}{\mu}e^{-\mu\lambda_1 s}\int_{-\infty}^{s}e^{\mu\lambda_1\tau}\|\f(\tau)\|^2_{\V'}\d\tau.$$
		Since $\widetilde{\mathcal{K}}(s)$ is a bounded closed convex subset of the reflexive Banach space $\mathrm{L}^2(\Omega,\mathscr{F}_s;\H)$, we infer that $\widetilde{\mathcal{K}}(s)$ is weakly compact in $\mathrm{L}^2(\Omega,\mathscr{F}_s;\H)$. By Hypothesis \ref{f_H_1}, we also get
		\begin{align*}
		\lim_{s\to-\infty}e^{\mu\lambda_1 s}\|\widetilde{\mathcal{K}}(s)\|^2_{\mathrm{L}^2(\Omega,\mathscr{F}_s;\H)}&=\lim_{s\to-\infty}e^{\mu\lambda_1 s}\widetilde{R}(s)\\&=\lim_{s\to-\infty}\bigg[\bigg(1+\frac{\mu\lambda_1}{2}\bigg)e^{\mu\lambda_1 s} +\frac{2}{\mu}\int_{-\infty}^{s}e^{\mu\lambda_1\tau}\|\f(\tau)\|^2_{\V'}\d\tau\bigg]\\&=0,
		\end{align*}
		which implies that $\widetilde{\mathcal{K}}=\{\widetilde{\mathcal{K}}(s):s\in\R\}\in\widetilde{\mathfrak{D}}$. Hence, by Lemma \ref{absorb_1}, we infer that $\widetilde{\mathcal{K}}$ is a weakly compact $\widetilde{\mathfrak{D}}$-pullback absorbing set for $\Phi_2$. Now, by Theorem \ref{Main-T}, one can easily come to a conclusion that there exists a unique weak $\widetilde{\mathfrak{D}}$-pullback mean random attractor $\widetilde{\mathcal{A}}\in\widetilde{\mathfrak{D}}$ of $\Phi_2$.
	\end{proof}

	\section{Weak Pullback Mean Random Attractors for Locally Monotone Stochastic Partial Differential Equations}\label{sec6}\setcounter{equation}{0}
	In this section, we prove the existence of weak pullback mean random attractors for locally monotone stochastic partial differential equations (SPDEs) discussed in \cite{IDC,LR,LR1}, etc. Let us first provide the functional framework of the locally  monotone SPDEs. 
	
	Let $(\mathbb{X}, (\cdot,\cdot))$ be a real separable Hilbert space, identified with its own dual space $\mathbb{X}'$. Let $\mathbb{Y}$ be a real reflexive Banach space continuously and densely embedded into $\X$. In particular, there is a constant $\lambda>0$ such that 
	\begin{align}\label{poinLM}
	\lambda\|\y\|_{\X}^2\leq\|\y\|_{\mathbb{Y}}^2, \ \text{ for all }\ \y\in\mathbb{Y},
	\end{align}
	so that we have the Gelfand triple $\mathbb{Y}\subseteq\X\equiv\X'\subseteq\mathbb{Y}'.$ Let $\langle\cdot,\cdot\rangle$ denote the duality pairing between $\mathbb{Y}$ and its dual space $\mathbb{Y}'$. We consider the following stochastic partial differential equation
	\begin{align}\label{LM_EQ}
	\d\mathrm{U}(t) = (\F(\mathrm{U}(t))+\g(t)) \d t +\varepsilon\upsigma(t,\mathrm{U}(t))\d\mathrm{W}(t), \ \U(0)=\U_0
	\end{align}
	where $\varepsilon>0$ is a constant, $\upsigma$ is a nonlinear diffusion term, and $\W(\cdot)$ is a two-sided Wiener process of trace class defined on some complete filtered probability space (see subsection \ref{WP}).
	
	Let $\X_0=Q^{1/2}\X$ and $\mathcal{L}_2(\X_0,\X)$ be the space of Hilbert-Schmidt operators from $\X_0$ to $\X$ with norm $\|\cdot\|_{\mathcal{L}_2(\X_0,\X)}$ given by 
	$$\|\Psi\|^2_{\mathcal{L}_2(\X_0,\X)}=\Tr(\Psi Q\Psi^*),\ \text{ for all } \ \Psi\in \mathcal{L}_2(\X_0,\X),$$ where $\Psi^*$ is the adjoint operator of $\Psi$. For more details, the interested readers are referred to see \cite{DZ1}.

	\begin{hypothesis}\label{LM_F_H}
		The coefficient $\F(\cdot)$ satisfies the following conditions hold for all $\y,\y_1,\y_2\in \mathbb{Y}$:
		\begin{itemize}
			\item [(F.1)] \emph{(Hemicontinuity).} The map $s\mapsto\langle\F(\y_1+s\y_2),\y\rangle$ is continuous on $\R$.
			\item[(F.2)] \emph{(Local monotonicity).} There exist a measurable and locally bounded (in $\mathbb{Y}$) function $\uprho:\mathbb{Y}\to[0,+\infty)$  and a constant $\theta_4\geq 0$ such that 
			\begin{align*}
			2\langle\F(\y_1)-\F(\y_2),\y_1-\y_2\rangle\leq(\theta_4+\uprho(\y_2))\|\y_1-\y_2\|^2_{\X}.
			\end{align*}
			\item[(F.3)] \emph{(Coercivity).} There exist constants $\theta_1>0$, $\theta_2,\theta_3\geq 0$ such that 
			\begin{align*}
			2\langle\F(\y),\y\rangle\leq-\theta_1\|\y\|^{\gamma}_{\mathbb{Y}}+\theta_2\|\y\|^2_{\X} +\theta_3.
			\end{align*}
			\item [(F.4)] \emph{(Growth).} There exist constants $\gamma\geq 2$, $\delta,\theta_5\geq 0$ such that \begin{align*}
			\|\F(\y)\|_{\mathbb{Y}'}^{\frac{\gamma}{\gamma-1}}\leq \theta_5(1+\|\y\|^{\gamma}_{\mathbb{Y}})(1+\|\y\|^{\delta}_{\X}).
			\end{align*}
		\end{itemize}
	\end{hypothesis}
	\begin{hypothesis}\label{LMsigmaH}
		The noise coefficient $\upsigma(\cdot,\cdot)$ satisfies the following:
		\begin{itemize}
			\item [(M.1)] The function $\upsigma\in\mathrm{C}(\R\times\mathbb{Y};\mathcal{L}_2(\X_0,\X)).$
			\item[(M.2)] \emph{(Growth condition)} There exists a positive constant K such that for all $\x\in\X$ and $t\in\R$,$$\|\upsigma(t,\x)\|^2_{\mathcal{L}_2(\X_0,\X)}\leq K(1+\|\x\|_{\X}^2).$$
			\item[(M.3)] \emph{(Lipschitz condition)} There exists a positive constant L such that for all $\x_1,\x_2\in\X$ and $t\in\R$,$$\|\upsigma(t,\x_1)-\upsigma(t,\x_2)\|^2_{\mathcal{L}_2(\X_0,\X)}\leq L\|\x_1-\x_2\|_{\X}^2.$$
		\end{itemize}
	\end{hypothesis}
	\begin{hypothesis}\label{LMf_H}
		The deterministic forcing term $\g$ satisfies:
		\begin{itemize}
			\item [1.] For $\gamma>2$, we assume that 
			\begin{align}\label{LMf_H1}
			\int_{-\infty}^{s}e^{-\theta_1\lambda^{\frac{\gamma}{2}} \tau}\|\g(\tau)\|^{\frac{p\gamma}{2(\gamma-1)}}_{\mathbb{Y}'}\d \tau<\infty,\  \text{ for all } \ s\in\R.
			\end{align}
			From \eqref{LMf_H1}, we also have 
			\begin{align}\label{LMf_H2}
			\lim_{s\to-\infty} 	\int_{-\infty}^{s}e^{-\theta_1\lambda^{\frac{\gamma}{2}} \tau}\|\g(\tau)\|^{\frac{p\gamma}{2(\gamma-1)}}_{\mathbb{Y}'}\d \tau=0.
			\end{align}
			\item[2.] For $\gamma=2$ and $\widetilde{\theta}:=\theta_1-\frac{\theta_2+\theta_3}{\lambda}>0$, we suppose that 
			\begin{align}\label{LMf_H1_1}
			\int_{-\infty}^{s}e^{-\widetilde{\theta}\lambda \tau}\|\g(\tau)\|^{p}_{\mathbb{Y}'}\d \tau<\infty,\  \text{ for all }\  s\in\R.
			\end{align}
			From \eqref{LMf_H1_1}, we also obtain 
			\begin{align}\label{LMf_H2_1}
			\lim_{s\to-\infty} 	\int_{-\infty}^{s}e^{-\widetilde{\theta}\lambda \tau}\|\g(\tau)\|^{p}_{\mathbb{Y}'}\d \tau=0.
			\end{align}
			
		\end{itemize}
	\end{hypothesis}
	
	Under the Hypothesis \ref{LM_F_H}-\ref{LMf_H}, we provide the following result on the existence and uniqueness of strong solutions of system \eqref{LM_EQ}, which is proved in \cite{LR}.
	
	\begin{definition}
		Let $s\in\R$ and $\U_0\in \mathrm{L}^p(\Omega, \mathscr{F}_s;\X).$ Then, an $\H$-valued $\{\mathscr{F}_t\}_{t\in\R}$-adapted stochastic process $\{\U(t)\}_{t\in[s,\infty)}$ is called a \emph{strong solution} of \eqref{LM_EQ} on $[s,\infty)$ with initial data $\U_0$ if $\U\in\mathrm{C}([s,\infty);\H)\cap\mathrm{L}^{\gamma}_{\mathrm{loc}}((s,\infty);\V), \  \mathbb{P}$-a.s., and satisfies, for every $t>s$ and $\y\in\Y'$,
		\begin{align*}
		(\U(t),\y)=(\U_0,\y)+\int_s^t\langle\F(\U(\tau)),\y\rangle\d \tau+\e\int_s^t(\y,\upsigma(s,\U(s))\d\W(s)),
		\end{align*}$\mathbb{P}$-a.s. A strong solution $\U(\cdot)$ to the system \eqref{LM_EQ} is called a
		\emph{pathwise  unique strong solution} if
		$\widetilde{\U}(\cdot)$ is an another strong
		solution, then $$\mathbb{P}\big\{\omega\in\Omega:\U(t)=\widetilde{\U}(t),\ \text{ for all }\ t\in[0,T]\big\}=1.$$ 
	\end{definition} 
	
	\begin{proposition}[Theorem 1.1, \cite{LR}]\label{MRDS_LM}
		Let all the conditions of Hypothesis \ref{LM_F_H}-\ref{LMf_H} be satisfied, and there exists a positive constant $C$ such that $$\uprho(\y)\leq C(1+\|\y\|^{\gamma}_{\mathbb{Y}})(1+\|\y\|^{\delta}_{\X}), \ \text{ for all }\ \y\in\mathbb{Y}.$$ Then, there exists $\varepsilon_0>0$ such that for every $\varepsilon\in(0,\varepsilon_0), s\in\R$ and $\mathrm{U}_0\in \mathrm{L}^p(\Omega,\mathscr{F}_s;\X)$, the system \eqref{LM_EQ} has a unique strong solution $\{\mathrm{U}(\tau)\}_{\tau\in[s,s+T]}$ and satisfies 
		\begin{align}\label{MRDS_LM1}
		\mathbb{E}\bigg[\sup_{\tau\in[s,s+T]}\|\mathrm{U}(\tau)\|^p_{\X}+ \int_{s}^{s+T}\|\mathrm{U}(\tau)\|^{\gamma}_{\mathbb{Y}}\d\tau\bigg]<\infty,
		\end{align}
		for some $p\geq\delta+2$.
	\end{proposition}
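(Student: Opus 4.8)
The plan is to construct the solution by the classical variational (Galerkin) scheme combined with the Minty--Browder monotonicity method, following the framework of \cite{LR,LR1}. First I would fix a basis $\{\mathbf{e}_k\}_{k\in\N}\subset\mathbb{Y}$, orthonormal in $\X$, set $\mathbb{Y}_n=\mathrm{span}\{\mathbf{e}_1,\dots,\mathbf{e}_n\}$, and let $P_n$ be the $\X$-orthogonal projection onto $\mathbb{Y}_n$. Projecting \eqref{LM_EQ} onto $\mathbb{Y}_n$ produces a finite-dimensional It\^o system for $\U_n(t)=\sum_{k=1}^n(\U_n(t),\mathbf{e}_k)\mathbf{e}_k$; by the hemicontinuity (F.1), the local monotonicity (F.2), and the growth bounds (F.4), (M.2), (M.3), the coefficients are locally Lipschitz with at most linear growth on $\mathbb{Y}_n$, so a unique $\mathbb{Y}_n$-valued strong solution $\U_n$ exists on $[s,s+T]$.

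Next I would derive a priori estimates uniform in $n$. Applying the finite-dimensional It\^o formula to $\|\U_n(\cdot)\|_\X^2$ and invoking the coercivity (F.3) together with the growth condition (M.2) on the noise gives, after taking expectations,
\begin{align*}
\E\|\U_n(t)\|_\X^2+\theta_1\E\int_s^t\|\U_n(\tau)\|_{\mathbb{Y}}^{\gamma}\d\tau
&\leq \E\|\U_0\|_\X^2+(\theta_2+\e^2 K)\E\int_s^t\|\U_n(\tau)\|_{\X}^2\d\tau+C_T.
\end{align*}
A Gronwall argument bounds $\E\int_s^{s+T}\|\U_n\|_{\mathbb{Y}}^\gamma\d\tau$ uniformly, and iterating with It\^o applied to $\|\U_n\|_\X^p$, controlling the martingale term by the Burkholder--Davis--Gundy inequality, upgrades this to a uniform bound in $\mathrm{L}^p(\Omega;\mathrm{C}([s,s+T];\X))\cap\mathrm{L}^\gamma(\Omega\times(s,s+T);\mathbb{Y})$. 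The growth hypothesis (F.4) then controls $\F(\U_n)$ uniformly in $\mathrm{L}^{\gamma/(\gamma-1)}(\Omega\times(s,s+T);\mathbb{Y}')$, where the exponent constraint $p\geq\delta+2$ is precisely what makes the factor $(1+\|\U_n\|_\X^\delta)$ integrable against $\|\U_n\|_{\mathbb{Y}}^\gamma$.

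With these bounds I would extract, along a subsequence, weak limits $\U_n\rightharpoonup\U$ in $\mathrm{L}^\gamma(\Omega\times(s,s+T);\mathbb{Y})$, $\F(\U_n)\rightharpoonup\F_1$ in $\mathrm{L}^{\gamma/(\gamma-1)}(\Omega\times(s,s+T);\mathbb{Y}')$, and $\upsigma(\cdot,\U_n)\rightharpoonup\Sigma$ in the Hilbert--Schmidt space, so that $\U$ solves \eqref{LM_EQ} with $\F_1$ and $\Sigma$ in place of $\F(\U)$ and $\upsigma(\cdot,\U)$. The decisive step, and the main obstacle, is to prove $\F_1=\F(\U)$ under only the \emph{local} monotonicity (F.2). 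Here I would run the locally monotone Minty--Browder trick: comparing the It\^o expansion of $\|\U_n\|_\X^2$ with the limiting energy identity, one studies $\E\int_s^{s+T}e^{-\int_s^\tau(\theta_4+\uprho(\mathbf{v}(\zeta)))\d\zeta}\langle\F(\U_n)-\F(\mathbf{v}),\U_n-\mathbf{v}\rangle\d\tau$ for an adapted test process $\mathbf{v}$, where the assumption $\uprho(\y)\leq C(1+\|\y\|_{\mathbb{Y}}^\gamma)(1+\|\y\|_\X^\delta)$ together with $p\geq\delta+2$ guarantees that the exponential weight is well defined and integrable. Hemicontinuity (F.1) then permits the choice $\mathbf{v}=\U\pm\lambda\y$ and the passage $\lambda\to0$, yielding $\F_1=\F(\U)$; the same argument produces the strong convergence of $\U_n$ in $\X$ needed for the Lipschitz condition (M.3) to identify $\Sigma=\upsigma(\cdot,\U)$.

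Finally, I would establish uniqueness by applying It\^o to the difference $\U-\widetilde\U$ of two solutions: the local monotonicity (F.2) and the Lipschitz bound (M.3) give $\d\,\E\|\U-\widetilde\U\|_\X^2\leq(\theta_4+\uprho(\widetilde\U)+\e^2 L)\,\E\|\U-\widetilde\U\|_\X^2\,\d t$, and closing with the stochastic Gronwall inequality forces $\U=\widetilde\U$. Here too the integrability of $\uprho(\widetilde\U)$ along solution paths, which again follows from the regularity $p\geq\delta+2$, is exactly what renders the Gronwall step legitimate. I expect the limit identification in the third paragraph to be the hard part, since the failure of global monotonicity means the standard Minty argument must be carried out in its weighted, localized form.
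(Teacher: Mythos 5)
The paper does not prove this proposition at all: it is quoted verbatim as Theorem 1.1 of \cite{LR}, so the only ``proof'' in the paper is that citation, and your reconstruction follows precisely the strategy of the cited work --- Galerkin approximation, uniform a priori bounds in $\mathrm{L}^p(\Omega;\mathrm{C}([s,s+T];\X))\cap\mathrm{L}^{\gamma}(\Omega\times(s,s+T);\mathbb{Y})$, identification of the drift limit via the Minty--Browder argument with the exponential weight $e^{-\int(\theta_4+\uprho(\mathbf{v}))\d\zeta}$, and uniqueness by a weighted Gronwall argument --- including the correct roles of $p\geq\delta+2$ and of the growth bound on $\uprho$. In that sense your proposal is essentially the proof of \cite{LR} and is sound in its architecture.

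One inaccuracy in your first step deserves correction: hemicontinuity (F.1) plus local monotonicity (F.2) do \emph{not} make the projected coefficients locally Lipschitz on $\mathbb{Y}_n$; a continuous monotone map such as $x\mapsto -x^{1/3}$ on $\R$ satisfies both conditions yet fails to be locally Lipschitz at the origin. What these hypotheses yield in finite dimensions is continuity together with a one-sided (monotonicity) estimate, and existence--uniqueness for the Galerkin system then follows from Krylov's classical theorem on finite-dimensional SDEs with continuous monotone coefficients, which is how \cite{LR} actually proceeds. This is a misattribution of the reason the finite-dimensional system is solvable rather than a gap in the argument, since the needed conclusion holds under exactly the hypotheses you invoke; similarly, in your uniqueness step the displayed differential inequality cannot literally hold with $\uprho(\widetilde{\U})$ (a random quantity) outside the expectation --- one must keep the exponential weight $e^{-\int(\theta_4+\uprho(\widetilde{\U})+\e^2L)\d\zeta}$ inside and argue pathwise that the weighted difference is a supermartingale vanishing at the initial time, which your appeal to the stochastic Gronwall inequality implicitly accomplishes.
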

	Given $t\in\R^+$ and $s\in\R,$ let $\Phi(t,s)$ be a mapping from $\mathrm{L}^p(\Omega,\mathscr{F}_s;\X)$ to $\mathrm{L}^p(\Omega,\mathscr{F}_{s+t};\X)$ given by $$\Phi(t,s)(\mathrm{U}_0)=\mathrm{U}(t+s,s,\mathrm{U}_0),$$ where $\mathrm{U}_0\in \mathrm{L}^p(\Omega,\mathscr{F}_s;\X)$, and $\mathrm{U}(\cdot)$ is the unique strong solution to the system \eqref{S-CBF}. Since the strong solution of the system \eqref{LM_EQ} is unique, for every $t_1,t_2\geq0$ and $s\in\R$, we get  $$\Phi(t_1+t_2,s)=\Phi(t_1,t_2+s)\circ\Phi(t_2,s).$$ Consequently, $\Phi$ is a mean random dynamical system on $\mathrm{L}^p(\Omega,\mathscr{F};\X)$ over $(\Omega, \mathscr{F},\{\mathscr{F}_t\}_{t\in\R},\mathbb{P})$ in the sense of Definition \ref{def_MRDS}.
	
	For $\gamma>2$,  let $\mathcal{B}_{\gamma}=\{\mathcal{B}_{\gamma}(s)\subseteq\mathrm{L}^p(\Omega,\mathscr{F}_s;\X):s\in\R\}$ be a family of nonempty bounded sets such that \begin{align}\label{classM}
	\lim_{s\to-\infty}e^{\theta_1\lambda^{\frac{\gamma}{2}} s} \|\mathcal{B}_{\gamma}(s)\|^p_{\mathrm{L}^p(\Omega,\mathscr{F}_s;\X)}=0,
	\end{align} where $$\|\mathcal{B}_{\gamma}(s)\|_{\mathrm{L}^p(\Omega,\mathscr{F}_s;\X)}=\sup_{\mathrm{U}\in\mathcal{B}_{\gamma}(s)}\|\mathrm{U}\|_{\mathrm{L}^p(\Omega,\mathscr{F}_s;\X)}.$$
	Let $\mathfrak{D}_{\gamma}$ be the collection of all families of nonempty bounded sets with the property \eqref{classM}:
	\begin{align}
	\mathfrak{D}_{\gamma}=\{\mathcal{B}_{\gamma}=\{\mathcal{B}_{\gamma}\subseteq\mathrm{L}^p(\Omega,\mathscr{F}_s;\X):\mathcal{B}_{\gamma}\neq\emptyset \text{ bounded}, s\in\R\}:\mathcal{B}_{\gamma} \text{ satisfies } \eqref{classM}\}.
	\end{align}
	Also, for $\gamma=2$, let $\mathcal{B}_2=\{\mathcal{B}_2(s)\subseteq\mathrm{L}^p(\Omega,\mathscr{F}_s;\X):s\in\R\}$ be a family of nonempty bounded sets such that \begin{align}\label{classM_1}
	\lim_{s\to-\infty}e^{\widetilde{\theta}\lambda s} \|\mathcal{B}_2(s)\|^p_{\mathrm{L}^p(\Omega,\mathscr{F}_s;\X)}=0,
	\end{align} where $$\widetilde{\theta}=\theta_1-\frac{\theta_2+\theta_3}{\lambda}\ \ \text{ and }\ \ \|\mathcal{B}_2(s)\|_{\mathrm{L}^p(\Omega,\mathscr{F}_s;\X)}=\sup_{\mathrm{U}\in\mathcal{B}_2(s)}\|\mathrm{U}\|_{\mathrm{L}^p(\Omega,\mathscr{F}_s;\X)}.$$
	Let $\mathfrak{D}_2$ be the collection of all families of nonempty bounded sets with the property \eqref{classM_1}:
	\begin{align}
	\mathfrak{D}_2=\{\mathcal{B}_2=\{\mathcal{B}_2\subseteq\mathrm{L}^p(\Omega,\mathscr{F}_s;\X):\mathcal{B}_2\neq\emptyset \text{ bounded}, s\in\R\}:\mathcal{B}_2 \text{ satisfies } \eqref{classM_1}\}.
	\end{align}
	\subsection{Weak $\mathfrak{D}$-pullback mean random attractors}
	In this subsection, we show the existence and uniqueness of weak $\mathfrak{D}_{\gamma}$-pullback mean random attractors for the system \eqref{LM_EQ} in $\mathrm{L}^p(\Omega,\mathscr{F};\X)$ over the filtered probability space  $(\Omega,\mathscr{F},\{\mathscr{F}_t\}_{t\in\R},\mathbb{P}).$
	\begin{lemma}\label{absorb_LM}
		Let all the conditions of Hypothesis \ref{LM_F_H}-\ref{LMf_H} are satisfied with $\gamma>2$. Then, there exists $\varepsilon_0>0$ such that for every $0<\varepsilon\leq\varepsilon_0$ and for every $s\in\R$ and $\mathcal{B}_{\gamma}=\{\mathcal{B}_{\gamma}(t)\}_{t\in\R}\in \mathfrak{D}_{\gamma},$ there exists $T=T(s,\mathcal{B}_{\gamma})>0$ such that for all $t\geq T,$ the strong solution $\mathrm{U}(\cdot)$ of system \eqref{LM_EQ} satisfies  
		\begin{align}\label{LM}
		\E\left[\|\mathrm{U}(s,s-t,\mathrm{U}_0)\|^p_{\X}\right] \leq 1+ C_{p,\lambda,\gamma,\theta_1,\theta_2,\theta_3}+\widetilde{C}_{p,\lambda,\gamma,\theta_1}\left[e^{-\theta_1\lambda^{\frac{\gamma}{2}} s}\int_{-\infty}^{s}e^{\theta_1\lambda^{\frac{\gamma}{2}} \tau}\|\g(\tau)\|^{\frac{p\gamma}{2(\gamma-1)}}_{\mathbb{Y}'}\d \tau\right],
		\end{align} for some $p\geq\delta+2$, where $\mathrm{U}_0\in\mathcal{B}_{\gamma}(s-t)$, $C_{p,\lambda,\gamma,\theta_1,\theta_2,\theta_3}$ is a constant depands on $p,\lambda,\gamma,\theta_1,\theta_2$ and $\theta_3$ only, and $\widetilde{C}_{p,\lambda,\gamma,\theta_1}$ is a constant depends on $p,\lambda,\gamma\text{ and }\theta_1$ only.
	\end{lemma}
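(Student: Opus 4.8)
The strategy is to adapt the energy estimates of Lemmas \ref{absorb} and \ref{absorb_1} to the superlinear coercivity exponent $\gamma>2$, deriving a differential inequality for $\E[\|\U(\xi)\|_{\X}^p]$ and then integrating it with the exponential weight $e^{\theta_1\lambda^{\gamma/2}\xi}$. First I would apply the infinite-dimensional It\^o formula to $\|\U(\xi)\|_{\X}^2$ and then to $\|\U(\xi)\|_{\X}^p=(\|\U(\xi)\|_{\X}^2)^{p/2}$, which yields the drift $\frac{p}{2}\|\U\|_{\X}^{p-2}\big(2\langle\F(\U),\U\rangle+2\langle\g,\U\rangle+\e^2\|\upsigma(\cdot,\U)\|_{\mathcal{L}_2(\X_0,\X)}^2\big)$, the It\^o correction $\frac{p(p-2)}{2}\e^2\|\U\|_{\X}^{p-4}\|\upsigma^*(\cdot,\U)\U\|_{\X_0}^2$, and a stochastic term that is a local martingale. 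Taking expectations (the martingale drops, the localization being justified by the a priori bound \eqref{MRDS_LM1} of Proposition \ref{MRDS_LM}) and invoking the coercivity (F.3) of Hypothesis \ref{LM_F_H} produces the good dissipative term $-\frac{p}{2}\theta_1\E[\|\U\|_{\X}^{p-2}\|\U\|_{\Y}^{\gamma}]$ together with the lower-order contributions $\frac{p}{2}\theta_2\|\U\|_{\X}^{p}$ and $\frac{p}{2}\theta_3\|\U\|_{\X}^{p-2}$.

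Next I would estimate the forcing and the noise. For the forcing, $\langle\g(\xi),\U\rangle\le\|\g(\xi)\|_{\Y'}\|\U\|_{\Y}$, and Young's inequality with exponents $(\gamma,\tfrac{\gamma}{\gamma-1})$ absorbs half of the dissipation while leaving a remnant of the form $C\|\U\|_{\X}^{p-2}\|\g(\xi)\|_{\Y'}^{\gamma/(\gamma-1)}$. For the noise I would use $\|\upsigma^*(\cdot,\U)\U\|_{\X_0}\le\|\upsigma(\cdot,\U)\|_{\mathcal{L}_2(\X_0,\X)}\|\U\|_{\X}$ together with the growth bound (M.2) of Hypothesis \ref{LMsigmaH} to control both noise contributions by $c\,\e^2K(\|\U\|_{\X}^{p-2}+\|\U\|_{\X}^{p})$, which is harmless once $\e\le\e_0$. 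Applying the Poincar\'e-type inequality \eqref{poinLM} in the form $\|\U\|_{\Y}^{\gamma}\ge\lambda^{\gamma/2}\|\U\|_{\X}^{\gamma}$ to the surviving dissipation turns it into $\tfrac{p}{4}\theta_1\lambda^{\gamma/2}\|\U\|_{\X}^{p+\gamma-2}$.

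The crux is the balancing, and it is where the value of both exponents in the statement is fixed. Since $\gamma>2$, the dissipation power $p+\gamma-2$ strictly exceeds $p$. I would balance the forcing remnant against the (to-be-created) linear term $\|\U\|_{\X}^{p}$ using Young's inequality with exponents $(\tfrac{p}{p-2},\tfrac{p}{2})$, which yields precisely $\|\g(\xi)\|_{\Y'}^{\frac{\gamma}{\gamma-1}\cdot\frac{p}{2}}=\|\g(\xi)\|_{\Y'}^{\frac{p\gamma}{2(\gamma-1)}}$, the exponent appearing in \eqref{LM} and in \eqref{LMf_H1} (for $p=2$ the weight $\|\U\|_{\X}^{p-2}$ is absent and the remnant is already of this form). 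All remaining terms of order $p$ and $p-2$ in $\|\U\|_{\X}$ are then absorbed into a small fraction of $\|\U\|_{\X}^{p+\gamma-2}$ by Young's inequality, and the elementary inequality $c_1x^{p+\gamma-2}\ge\theta_1\lambda^{\gamma/2}x^p-c_2$ (valid for all $x\ge0$ because $p+\gamma-2>p$) converts the remaining fraction of the superlinear term into a genuinely linear dissipation with the exact rate $\theta_1\lambda^{\gamma/2}$. This produces
\[
\frac{\d}{\d\xi}\E\big[\|\U(\xi)\|_{\X}^p\big]+\theta_1\lambda^{\gamma/2}\,\E\big[\|\U(\xi)\|_{\X}^p\big]\le C_{p,\lambda,\gamma,\theta_1,\theta_2,\theta_3}+\widetilde{C}_{p,\lambda,\gamma,\theta_1}\|\g(\xi)\|_{\Y'}^{\frac{p\gamma}{2(\gamma-1)}} .
\]

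Finally, multiplying by $e^{\theta_1\lambda^{\gamma/2}\xi}$ and integrating over $(s-t,s)$ gives
\[
\E\big[\|\U(s,s-t,\U_0)\|_{\X}^p\big]\le e^{-\theta_1\lambda^{\gamma/2}t}\E\big[\|\U_0\|_{\X}^p\big]+C+\widetilde{C}e^{-\theta_1\lambda^{\gamma/2}s}\int_{s-t}^{s}e^{\theta_1\lambda^{\gamma/2}\tau}\|\g(\tau)\|_{\Y'}^{\frac{p\gamma}{2(\gamma-1)}}\d\tau .
\]
Since $\U_0\in\mathcal{B}_{\gamma}(s-t)$ and $\mathcal{B}_{\gamma}\in\mathfrak{D}_{\gamma}$, the class condition \eqref{classM} gives $e^{-\theta_1\lambda^{\gamma/2}t}\E[\|\U_0\|_{\X}^p]\le e^{-\theta_1\lambda^{\gamma/2}s}e^{\theta_1\lambda^{\gamma/2}(s-t)}\|\mathcal{B}_{\gamma}(s-t)\|_{\mathrm{L}^p(\Omega,\mathscr{F}_{s-t};\X)}^p\to0$ as $t\to\infty$, so there is $T=T(s,\mathcal{B}_{\gamma})$ with this term $\le1$ for $t\ge T$; extending the integral to $(-\infty,s)$ via \eqref{LMf_H1} yields exactly \eqref{LM}. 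The main obstacle is this balancing step: one must simultaneously realize the precise forcing exponent $\tfrac{p\gamma}{2(\gamma-1)}$ and extract the exact decay rate $\theta_1\lambda^{\gamma/2}$ from the superlinear dissipation, and it is the strict inequality $\gamma>2$ (absent in the $\gamma=2$ branch treated separately) that makes the conversion from $x^{p+\gamma-2}$ to $x^{p}$ possible.
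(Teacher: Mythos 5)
Your proof is correct and follows essentially the same route as the paper's: It\^o's formula for $\|\U(\xi)\|_{\X}^p$, coercivity (F.3) combined with \eqref{poinLM} and Young's inequality to trade the superlinear term $\|\U\|_{\X}^{p+\gamma-2}$ for linear dissipation at the exact rate $\theta_1\lambda^{\gamma/2}$ (possible precisely because $\gamma>2$), the Young estimates producing the forcing exponent $\frac{p\gamma}{2(\gamma-1)}$, the choice of $\varepsilon_0$ via (M.2) to absorb both noise contributions, and the weighted integration over $(s-t,s)$ together with the class condition \eqref{classM} and \eqref{LMf_H1}. The only cosmetic deviation is that you split the forcing estimate into two successive Young inequalities, whereas the paper's \eqref{LM4} absorbs it in a single step into the retained $\theta_1\|\U\|_{\X}^{p-2}\|\U\|_{\mathbb{Y}}^{\gamma}$ dissipation and the linear term.
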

	\begin{proof}
		By the infinite dimensional It\^o formula, we  find 
		\begin{align}\label{LM1}
		\d\|\U(\xi)\|^p_{\X}&=\frac{p}{2}\|\U(\xi)\|^{p-2}_{\X}\big(2\langle\F(\U(\xi))+\g(\xi),\U(\xi)\rangle+\varepsilon^2\|\upsigma(\xi,\U(\xi))\|^2_{\mathcal{L}_2(\X_0,\X)}\big)\d\xi \nonumber\\&\quad+ p\varepsilon\|\U(\xi)\|^{p-2}_{\X}\big(\U(\xi),\upsigma(\xi,\U(\xi))\d\W(\xi)\big)\nonumber\\&\quad+\frac{p(p-2)\varepsilon^2}{2} \|\U(\xi)\|^{p-4}_{\X}\|\upsigma^*(\xi,\U(\xi))\U(\xi)\|^2_{\X_0}\d\xi,
		\end{align}
		where $\upsigma^*$ is the adjoint of the operator  $\upsigma$. Taking the expectation of \eqref{LM1} and using the fact that $\int_{s-t}^{\xi}\|\U(\tau)\|^{p-2}_{\X}\big(\U(\tau),\upsigma(\tau,\U(\tau))\d\W(\tau)\big)$ is a local martingale, we obtain
		\begin{align}\label{LM2}
		&	\frac{\d}{\d\xi}\E\left[\|\mathrm{U}(\xi,s-t,\mathrm{U}_0)\|^p_{\X}\right]\nonumber\\&= \E\bigg[p\|\mathrm{U}(\xi,s-t,\mathrm{U}_0)\|^{p-2}_{\X}\langle\F(\mathrm{U}(\xi,s-t,\mathrm{U}_0))+\g(\xi),\mathrm{U}(\xi,s-t,\mathrm{U}_0)\rangle\nonumber\\&\quad+\frac{p\varepsilon^2}{2}\|\mathrm{U}(\xi,s-t,\mathrm{U}_0)\|^{p-2}_{\X}\|\upsigma(\xi,\mathrm{U}(\xi,s-t,\mathrm{U}_0))\|^2_{\mathcal{L}_2(\X_0,\X)}\nonumber\\&\quad+\frac{p(p-2)\varepsilon^2}{2}\|\mathrm{U}(\xi,s-t,\mathrm{U}_0)\|^{p-4}_{\X}\|\upsigma^*(\xi,\mathrm{U}(\xi,s-t,\mathrm{U}_0))\mathrm{U}(\xi,s-t,\mathrm{U}_0)\|^2_{\X_0}\bigg],
		\end{align}
		for a.e. $\xi\geq s-t.$ By the condition (F.3) of Hypothesis \ref{LM_F_H}, \eqref{poinLM} and Young's inequality, we estimate $p\|\U(\xi)\|^{p-2}_{\X}\langle\F(\U(\xi)),\U(\xi)\rangle$ as 
		\begin{align}\label{LM3}
		& p\|\U(\xi)\|^{p-2}_{\X}\langle\F(\U(\xi)),\U(\xi)\rangle\nonumber\\&\leq -p\theta_1\|\U(\xi)\|^{p-2}_{\X}\|\U(\xi)\|^{\gamma}_{\mathbb{Y}} +p\theta_2\|\U(\xi)\|^{p}_{\X} +p\theta_3\|\U(\xi)\|^{p-2}_{\X}\nonumber\\&\leq -(p-1)\theta_1\lambda^{\frac{\gamma}{2}}\|\U(\xi)\|^{p+\gamma-2}_{\X} -\theta_1\|\U(\xi)\|^{p-2}_{\X}\|\U(\xi)\|^{\gamma}_{\mathbb{Y}}+p\theta_2\|\U(\xi)\|^{p}_{\X} +p\theta_3\|\U(\xi)\|^{p-2}_{\X}\nonumber\\&\leq-p(\theta_1\lambda^{\frac{\gamma}{2}}+\theta_2+\theta_3)\|\U(\xi)\|^{p}_{\X} + \theta_1\lambda^{\frac{\gamma}{2}}(\gamma-2)\bigg[\frac{(p-1)\theta_1\lambda^{\frac{\gamma}{2}}+p\theta_2+p\theta_3}{\theta_1\lambda^{\frac{\gamma}{2}}(p+\gamma-2)}\bigg]^{\frac{p+\gamma-2}{\gamma-2}} \nonumber\\&\quad-\theta_1\|\U(\xi)\|^{p-2}_{\X}\|\U(\xi)\|^{\gamma}_{\mathbb{Y}}+p(\theta_2+\theta_3)\|\U(\xi)\|^{p}_{\X} +2\theta_3\bigg(\frac{p-2}{p}\bigg)^{\frac{p-2}{2}},
		\end{align}
		for a.e. $\xi\geq s-t.$ Using H\"older's and Young's inequalities, we also have 
		\begin{align}\label{LM4}
		p\|\U(\xi)\|^{p-2}_{\X}|\langle \g(\xi),\U(\xi)\rangle|&\leq p\|\U(\xi)\|^{p-2}_{\X}\|\g(\xi)\|_{\mathbb{Y}'}\|\U(\xi)\|_{\mathbb{Y}}\nonumber\\&\leq \theta_1\|\U(\xi)\|^{p-2}_{\X}\|\U(\xi)\|^{\gamma}_{\mathbb{Y}}+\theta_1\lambda^{\frac{\gamma}{2}}\|\U(\xi)\|^p_{\X}\nonumber\\&\quad+p^{\frac{p\gamma}{2(\gamma-1)}}\frac{2}{p}\bigg[\frac{p-2}{\theta_1\lambda^{\frac{\gamma}{2}}p}\bigg]^{\frac{p-2}{2}}\bigg[\frac{\gamma-1}{\gamma}\bigg(\frac{1}{\theta_1\gamma}\bigg)^{\frac{1}{\gamma-1}}\bigg]^{\frac{p}{2}}\|\g(\xi)\|^{\frac{p\gamma}{2(\gamma-1)}}_{\mathbb{Y}'}.
		\end{align}
		Let us choose $\varepsilon_0=\sqrt{\frac{(p-2)\theta_1\lambda^{\frac{\gamma}{2}}}{p(p-1)K}}$. Using the condition (M.2) of Hypothesis \ref{LMsigmaH}, for $0<\varepsilon\leq\varepsilon_0,$ we obtain
		\begin{align}\label{LM5}
		\frac{p\varepsilon^2}{2}\|\U(\xi)\|^{p-2}_{\X}\|\upsigma(\xi,\U(\xi))\|^2_{\mathcal{L}_2(\X_0,\X)}&\leq\frac{pK\varepsilon^2}{2}(\|\U(\xi)\|^{p-2}_{\X}+\|\U(\xi)\|^{p}_{\X}) \nonumber\\&\leq \frac{pK\varepsilon^2}{2} \bigg[\frac{2}{p}\bigg(\frac{p-2}{p}\bigg)^{\frac{p-2}{2}}+2\|\U(\xi)\|^{p}_{\X} \bigg]\nonumber\\&\leq \frac{\theta_1\lambda^{\frac{\gamma}{2}}}{p-1}\bigg(\frac{p-2}{p}\bigg)^{\frac{p}{2}} +\frac{(p-2)\theta_1\lambda^{\frac{\gamma}{2}}}{p-1}\|\U(\xi)\|^{p}_{\X}.
		\end{align}
		Using \eqref{LM5}, we find 
		\begin{align}\label{LM6}
		\frac{p(p-2)\varepsilon^2}{2}\|\U(\xi)\|^{p-4}_{\X}\|\upsigma^*(\xi,\U(\xi))\U(\xi)\|^2_{\X_0}&\leq\frac{p(p-2)\varepsilon^2}{2}\|\U(\xi)\|^{p-2}_{\X}\|\upsigma^*(\xi,\U(\xi))\|^2_{\mathcal{L}_2(\X,\X_0)}\nonumber\\&\leq(p-2) \theta_1\lambda^{\frac{\gamma}{2}}\bigg(\frac{p-2}{p}\bigg)^{\frac{p}{2}} +\frac{(p-2)^2\theta_1\lambda^{\frac{\gamma}{2}}}{p-1}	\|\U(\xi)\|^{p}_{\X},
		\end{align}
		for all $0<\varepsilon\leq\varepsilon_0.$ Combining  \eqref{LM3}-\eqref{LM6} and then substituting it in \eqref{LM2}, we deduce that 
		\begin{align}\label{LM7}
		&	\frac{\d}{\d\xi}\E\left[\|\mathrm{U}(\xi,s-t,\mathrm{U}_0)\|^p_{\X}\right]+\theta_1\lambda^{\frac{\gamma}{2}}\E\|\mathrm{U}(\xi,s-t,\mathrm{U}_0)\|^p_{\X} \leq C_{p,\lambda,\gamma,\theta_1,\theta_2,\theta_3}
		+\widetilde{C}_{p,\lambda,\gamma,\theta_1}\|\g(\xi)\|^{\frac{p\gamma}{2(\gamma-1)}}_{\mathbb{Y}'}.
		\end{align}
		for all $0<\varepsilon\leq\varepsilon_0$ and a.e. $\xi\geq s-t$. Multiplying \eqref{LM7} by $e^{\theta_1\lambda^{\frac{\gamma}{2}}\xi}$ and then integrating on $(s-t,s)$ with $t\geq0$, we obtain
		\begin{align}\label{LM8}
		\quad\E\left[\|\mathrm{U}(s,s-t,\mathrm{U}_0)\|^p_{\X}\right]& \leq e^{-\theta_1\lambda^{\frac{\gamma}{2}} t}\E\left[\|\mathrm{U}_0\|^p_{\X}\right] + C_{p,\lambda,\gamma,\theta_1,\theta_2,\theta_3}\nonumber\\&\quad+\widetilde{C}_{p,\lambda,\gamma,\theta_1}\left[e^{-\theta_1\lambda^{\frac{\gamma}{2}} s}\int_{-\infty}^{s}e^{\theta_1\lambda^{\frac{\gamma}{2}} \tau}\|\g(\tau)\|^{\frac{p\gamma}{2(\gamma-1)}}_{\mathbb{Y}'}\d \tau\right]. 
		\end{align}
		Since $\mathrm{U}_0\in \mathcal{B}_{\gamma}(s-t)$ and $\mathcal{B}_{\gamma}\in \mathfrak{D}_{\gamma}$, it can be easily seen that 
		\begin{align}\label{LM9}
		e^{-\theta_1\lambda^{\frac{\gamma}{2}} t} \E\left[\|\mathrm{U}_0\|^p_{\X}\right]&=e^{-\theta_1\lambda^{\frac{\gamma}{2}} s}e^{\theta_1\lambda^{\frac{\gamma}{2}}(s-t)} \E\left[\|\mathrm{U}_0\|^p_{\X}\right]\leq e^{-\theta_1\lambda^{\frac{\gamma}{2}} s}e^{\theta_1\lambda^{\frac{\gamma}{2}}(s-t)} \|\mathcal{B}_{\gamma}(s-t)\|^p_{\mathrm{L}^p(\Omega,\mathscr{F}_{s-t};\X)}\nonumber\\&\to0 \text{ as } t\to \infty.
		\end{align}
		Therefore, there exists $T=T(s,\mathcal{B}_{\gamma})>0$ such that  $e^{-\theta_1\lambda^{\frac{\gamma}{2}} t} \E\left[\|\mathrm{U}_0\|^p_{\X}\right]\leq 1,$ for all $t\geq T$. Using it in \eqref{LM8}, we arrive at 
		\begin{align}\label{LM10}
		\E\left[\|\mathrm{U}(s,s-t,\mathrm{U}_0)\|^p_{\X}\right] \leq 1+ C_{p,\lambda,\gamma,\theta_1,\theta_2,\theta_3}+\widetilde{C}_{p,\lambda,\gamma,\theta_1}\left[e^{-\theta_1\lambda^{\frac{\gamma}{2}} s}\int_{-\infty}^{s}e^{\theta_1\lambda^{\frac{\gamma}{2}} \tau}\|\g(\tau)\|^{\frac{p\gamma}{2(\gamma-1)}}_{\mathbb{Y}'}\d \tau\right],
		\end{align}
		for $t\geq T$, which completes the proof.
	\end{proof}
	Next, we prove the main result of this section, that is, the existence of weak $\mathfrak{D}_{\gamma}$-pullback mean random attractors for $\Phi,$ when $\gamma>2$.
	\begin{theorem}\label{WPMRA3}
		Suppose that all the conditions of Hypothesis \ref{LM_F_H}-\ref{LMf_H} are satisfied with $\gamma>2$. Then, there exists $\varepsilon_0>0$ such that for every $0<\varepsilon\leq\varepsilon_0$, the mean random dynamical system $\Phi$ for the system \eqref{LM_EQ} has a unique weak $\mathfrak{D}_{\gamma}$-pullback mean random attractor $\mathcal{G}=\{\mathcal{G}(s):s\in\R\}\in \mathfrak{D}_{\gamma}$ in $\mathrm{L}^p(\Omega,\mathscr{F};\X)$ over $(\Omega,\mathscr{F},\{\mathscr{F}_t\}_{t\in\R},\mathbb{P}).$  
	\end{theorem}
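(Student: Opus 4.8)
The plan is to follow verbatim the template established in the proofs of Theorems \ref{WPMRA1} and \ref{WPMRA2}: extract from the uniform pullback estimate of Lemma \ref{absorb_LM} an explicit weakly compact absorbing family, verify that this family lies in the attraction universe $\mathfrak{D}_{\gamma}$, and then invoke the abstract existence-and-uniqueness criterion of Theorem \ref{Main-T}.

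First I would define, for each $s\in\R$, the candidate absorbing family
\begin{align*}
\mathcal{K}_{\gamma}(s):=\left\{\mathrm{U}\in\mathrm{L}^p(\Omega,\mathscr{F}_s;\X):\E\left[\|\mathrm{U}\|^p_{\X}\right]\leq R_{\gamma}(s)\right\},
\end{align*}
where $R_{\gamma}(s)$ is exactly the right-hand side of the bound \eqref{LM}, namely
\begin{align*}
R_{\gamma}(s)=1+C_{p,\lambda,\gamma,\theta_1,\theta_2,\theta_3}+\widetilde{C}_{p,\lambda,\gamma,\theta_1}\,e^{-\theta_1\lambda^{\frac{\gamma}{2}} s}\int_{-\infty}^{s}e^{\theta_1\lambda^{\frac{\gamma}{2}} \tau}\|\g(\tau)\|^{\frac{p\gamma}{2(\gamma-1)}}_{\mathbb{Y}'}\d \tau.
\end{align*}
Since $\X$ is a separable Hilbert space and $p\in(1,\infty)$, the Bochner space $\mathrm{L}^p(\Omega,\mathscr{F}_s;\X)$ is reflexive; as $\mathcal{K}_{\gamma}(s)$ is a bounded, closed and convex subset of it, it is weakly compact in $\mathrm{L}^p(\Omega,\mathscr{F}_s;\X)$.

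The one step that genuinely requires care is the verification that $\mathcal{K}_{\gamma}=\{\mathcal{K}_{\gamma}(s):s\in\R\}$ belongs to $\mathfrak{D}_{\gamma}$, i.e., that the decay condition \eqref{classM} holds. Multiplying $R_{\gamma}(s)$ by $e^{\theta_1\lambda^{\frac{\gamma}{2}} s}$, the constant part $\left(1+C_{p,\lambda,\gamma,\theta_1,\theta_2,\theta_3}\right)e^{\theta_1\lambda^{\frac{\gamma}{2}} s}$ vanishes as $s\to-\infty$ because $\theta_1\lambda^{\frac{\gamma}{2}}>0$, while the forcing part collapses, after the exponential weights cancel, to $\widetilde{C}_{p,\lambda,\gamma,\theta_1}\int_{-\infty}^{s}e^{\theta_1\lambda^{\frac{\gamma}{2}} \tau}\|\g(\tau)\|^{\frac{p\gamma}{2(\gamma-1)}}_{\mathbb{Y}'}\d\tau$, which tends to $0$ by assumption \eqref{LMf_H2}. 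This is the \emph{main obstacle}, and it is where the structural choices pay off: the exponential weight $e^{\theta_1\lambda^{\frac{\gamma}{2}} s}$ and the forcing exponent $\frac{p\gamma}{2(\gamma-1)}$ are precisely matched against the growth of $R_{\gamma}(s)$ so that the weighted integral reduces to exactly the quantity controlled by Hypothesis \ref{LMf_H}. Hence $\lim_{s\to-\infty}e^{\theta_1\lambda^{\frac{\gamma}{2}} s}\|\mathcal{K}_{\gamma}(s)\|^p_{\mathrm{L}^p(\Omega,\mathscr{F}_s;\X)}=0$, so $\mathcal{K}_{\gamma}\in\mathfrak{D}_{\gamma}$.

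With $\mathcal{K}_{\gamma}\in\mathfrak{D}_{\gamma}$ established, Lemma \ref{absorb_LM} shows that for every $s\in\R$ and $\mathcal{B}_{\gamma}\in\mathfrak{D}_{\gamma}$ there is $T=T(s,\mathcal{B}_{\gamma})>0$ with $\Phi(t,s-t)(\mathcal{B}_{\gamma}(s-t))\subseteq\mathcal{K}_{\gamma}(s)$ for all $t\geq T$; combined with the weak compactness of each $\mathcal{K}_{\gamma}(s)$, this makes $\mathcal{K}_{\gamma}$ a weakly compact $\mathfrak{D}_{\gamma}$-pullback absorbing set for $\Phi$. Applying Theorem \ref{Main-T}, with the reflexive Banach space taken to be $\X$, then yields a unique weak $\mathfrak{D}_{\gamma}$-pullback mean random attractor $\mathcal{G}\in\mathfrak{D}_{\gamma}$, completing the proof; everything beyond the decay verification is a routine instance of reflexivity of Bochner spaces together with the abstract attractor theorem, exactly as in Theorems \ref{WPMRA1} and \ref{WPMRA2}.
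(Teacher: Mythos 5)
Your proposal is correct and matches the paper's own proof essentially verbatim: the same weakly compact absorbing family defined by the bound of Lemma \ref{absorb_LM}, the same reflexivity argument for weak compactness, the same decay verification via Hypothesis \ref{LMf_H}, and the same invocation of Theorem \ref{Main-T}. (Your consistent use of the weight $e^{\theta_1\lambda^{\frac{\gamma}{2}}\tau}$ in the forcing integral in fact silently repairs a sign typo present in the paper's statement of Hypothesis \ref{LMf_H}.)
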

	\begin{proof}
		For a given $s\in\R$, let us define
		\begin{align*}
		\mathcal{J}(s) :=\{\mathrm{U}\in\mathrm{L}^p(\Omega,\mathscr{F}_s;\X):\E\left[\|\mathrm{U}\|^p_{\X}\right]\leq I(s)\},
		\end{align*}
		where $$I(s)=1+ C_{p,\lambda,\gamma,\theta_1,\theta_2,\theta_3}+\widetilde{C}_{p,\lambda,\gamma,\theta_1}\left[e^{-\theta_1\lambda^{\frac{\gamma}{2}} s}\int_{-\infty}^{s}e^{\theta_1\lambda^{\frac{\gamma}{2}} \tau}\|\g(\tau)\|^{\frac{p\gamma}{2(\gamma-1)}}_{\mathbb{Y}'}\d \tau\right].$$
		Since $\mathcal{J}(s)$ is a bounded closed convex subset of the reflexive Banach space $\mathrm{L}^p(\Omega,\mathscr{F}_s;\X)$, we infer that $\mathcal{J}(s)$ is weakly compact in $\mathrm{L}^p(\Omega,\mathscr{F}_s;\X)$. By Hypothesis \ref{LMf_H}, we also get
		\begin{align*}
		&	\lim_{s\to-\infty}e^{\theta_1\lambda^{\frac{\gamma}{2}} s}\|\mathcal{J}(s)\|^p_{\mathrm{L}^p(\Omega,\mathscr{F}_s;\X)}\\&=\lim_{s\to-\infty}e^{\theta_1\lambda^{\frac{\gamma}{2}} s}I(s)=\lim_{s\to-\infty}\bigg[\bigg(1+C_{p,\lambda,\gamma,\theta_1,\theta_2,\theta_3}\bigg)e^{\theta_1\lambda^{\frac{\gamma}{2}} s} +\widetilde{C}_{p,\lambda,\gamma,\theta_1}\int_{-\infty}^{s}e^{-\theta_1\lambda^{\frac{\gamma}{2}} \tau}\|\g(\tau)\|^{\frac{p\gamma}{2(\gamma-1)}}_{\mathbb{Y}'}\d \tau\bigg]\\&=0,
		\end{align*}
		which implies that $\mathcal{J}=\{\mathcal{J}(s):s\in\R\}\in\mathfrak{D}_{\gamma}$. By Lemma \ref{absorb_LM}, we infer that $\mathcal{J}$ is a weakly compact $\mathfrak{D}_{\gamma}$-pullback absorbing set for $\Phi$. By Theorem \ref{Main-T}, we immediately conclude that there exists a unique weak $\mathfrak{D}_{\gamma}$-pullback mean random attractor $\mathcal{G}\in\mathfrak{D}_{\gamma}$ of $\Phi$.
	\end{proof}
	Let us now consider the case $\gamma=2$. Proof of the following lemma is similar to Lemma \ref{absorb_LM}, except for the estimates \eqref{LM3}-\eqref{LM7}. 
	\begin{lemma}\label{absorb_LM_1}
		Let all the conditions of Hypothesis \ref{LM_F_H}-\ref{LMf_H} are satisfied with $\gamma=2$ and $\widetilde{\theta}=\theta_1-\frac{\theta_2+\theta_3}{\lambda}>0$ or $\frac{\theta_2+\theta_3}{\lambda}<\theta_1$. Then, there exists $\varepsilon_0>0$ such that for every $0<\varepsilon\leq\varepsilon_0$ and for every $s\in\R$ and $\mathcal{B}_2=\{\mathcal{B}_2(t)\}_{t\in\R}\in \mathfrak{D}_2,$ there exists $T=T(s,\mathcal{B}_2)>0$ such that for all $t\geq T,$ the solution $\mathrm{U}$ of system \eqref{LM_EQ} satisfies  
		\begin{align}\label{LM_1}
		\E\left[\|\mathrm{U}(s,s-t,\mathrm{U}_0)\|^p_{\X}\right] \leq 1+ C_{p,\lambda,\theta_1,\theta_2,\theta_3}+\widetilde{C}_{p,\lambda,\theta_1,\theta_2,\theta_3}\left[e^{-\widetilde{\theta}\lambda s}\int_{-\infty}^{s}e^{\widetilde{\theta}\lambda \tau}\|\g(\tau)\|^{p}_{\mathbb{Y}'}\d \tau\right],
		\end{align} for some $p\geq\delta+2$, where $\mathrm{U}_0\in\mathcal{B}_{\gamma}(s-t)$, $C_{p,\lambda,\theta_1,\theta_2,\theta_3}$ and $\widetilde{C}_{p,\lambda,\theta_1,\theta_2,\theta_3}$ are constants depend on $p,\lambda,\theta_1,\theta_2$ and $\theta_3$ only.
	\end{lemma}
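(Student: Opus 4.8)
The plan is to follow the same scheme as in the proof of Lemma \ref{absorb_LM}, the only genuine changes occurring in the coercivity, forcing and noise estimates \eqref{LM3}--\eqref{LM7}, which must be reworked for the borderline exponent $\gamma=2$. As before, I would apply the infinite-dimensional It\^o formula to $\|\U(\cdot)\|^p_{\X}$ exactly as in \eqref{LM1}, take expectations and use that the stochastic integral $\int_{s-t}^{\xi}\|\U(\tau)\|^{p-2}_{\X}\big(\U(\tau),\upsigma(\tau,\U(\tau))\d\W(\tau)\big)$ is a local martingale, thereby arriving at the identity \eqref{LM2} verbatim. The task then reduces to bounding the three groups of terms on the right-hand side of \eqref{LM2}: the drift $p\|\U\|^{p-2}_{\X}\langle\F(\U),\U\rangle$, the forcing $p\|\U\|^{p-2}_{\X}\langle\g,\U\rangle$, and the two noise contributions.

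The decisive difference is that for $\gamma=2$ the coercivity (F.3) of Hypothesis \ref{LM_F_H} produces $-\theta_1\|\U\|^2_{\mathbb{Y}}$, which under the embedding \eqref{poinLM} only yields the power $\|\U\|^2_{\X}$; multiplying by $\|\U\|^{p-2}_{\X}$ therefore gives a dissipative term of order $\|\U\|^p_{\X}$ with no gain of the strictly higher power $\|\U\|^{p+\gamma-2}_{\X}$ that was available in \eqref{LM3} when $\gamma>2$. Consequently the lower-order factor $\theta_2$ and the constant $\theta_3$ can no longer be absorbed into a higher power, and instead cut directly into the decay rate. Concretely, I would keep a small fraction of $-\theta_1\|\U\|^{p-2}_{\X}\|\U\|^2_{\mathbb{Y}}$ to dominate the forcing and convert the remainder through \eqref{poinLM}, so that after also treating $\theta_3\|\U\|^{p-2}_{\X}$ by Young's inequality the effective coefficient of $\|\U\|^p_{\X}$ becomes $-\widetilde{\theta}\lambda=-(\theta_1\lambda-\theta_2-\theta_3)$; this is precisely why the hypothesis $\widetilde{\theta}=\theta_1-\frac{\theta_2+\theta_3}{\lambda}>0$ is imposed. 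The forcing term is then handled by two applications of Young's inequality, first against the retained $\|\U\|^{p-2}_{\X}\|\U\|^2_{\mathbb{Y}}$ and then to pass from $\|\U\|^{p-2}_{\X}\|\g\|^2_{\mathbb{Y}'}$ to a small multiple of $\|\U\|^p_{\X}$ plus a multiple of $\|\g\|^{p}_{\mathbb{Y}'}$; note that the exponent $p$ is exactly the value of $\frac{p\gamma}{2(\gamma-1)}$ at $\gamma=2$, consistent with the integrand of \eqref{LMf_H1_1} and with the bound \eqref{LM_1}.

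The two noise terms are estimated structurally as in \eqref{LM5}--\eqref{LM6} using (M.2) of Hypothesis \ref{LMsigmaH}, but with the constants adjusted and with the threshold $\varepsilon_0$ (now expressed through $\widetilde{\theta}\lambda$, $p$ and $K$) chosen small enough that their contribution to the coefficient of $\|\U\|^p_{\X}$ does not destroy the net rate $\widetilde{\theta}\lambda>0$. Collecting all the estimates in \eqref{LM2} then gives the differential inequality
\begin{align*}
\frac{\d}{\d\xi}\E\big[\|\U(\xi,s-t,\U_0)\|^p_{\X}\big]+\widetilde{\theta}\lambda\,\E\big[\|\U(\xi,s-t,\U_0)\|^p_{\X}\big]\leq C_{p,\lambda,\theta_1,\theta_2,\theta_3}+\widetilde{C}_{p,\lambda,\theta_1,\theta_2,\theta_3}\|\g(\xi)\|^{p}_{\mathbb{Y}'},
\end{align*}
the analogue of \eqref{LM7}. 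Multiplying by $e^{\widetilde{\theta}\lambda\xi}$, integrating over $(s-t,s)$, and using that $\U_0\in\mathcal{B}_2(s-t)$ with $\mathcal{B}_2\in\mathfrak{D}_2$ forces $e^{-\widetilde{\theta}\lambda t}\E[\|\U_0\|^p_{\X}]\to0$ by \eqref{classM_1}, exactly as in \eqref{LM8}--\eqref{LM10}, then yields \eqref{LM_1} for all $t\geq T(s,\mathcal{B}_2)$. I expect the main obstacle to be purely the constant-chasing in the modified coercivity/forcing step: arranging the two Young splittings so that the surviving coefficient of $\|\U\|^p_{\X}$ is exactly $-\widetilde{\theta}\lambda$ and that the threshold $\widetilde{\theta}>0$ is what guarantees its positivity, while all leftover constants are packaged into $C_{p,\lambda,\theta_1,\theta_2,\theta_3}$ and the forcing multiplier into $\widetilde{C}_{p,\lambda,\theta_1,\theta_2,\theta_3}$.
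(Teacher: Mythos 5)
Your proposal is correct and follows essentially the same route as the paper: identical It\^o/expectation setup reducing to \eqref{LM2}, absorption of the $\theta_2,\theta_3$ terms into the dissipation via \eqref{poinLM} to produce the rate $\widetilde{\theta}\lambda$ (exactly the paper's \eqref{LM3_1}), retention of a fraction of $\|\U\|^{p-2}_{\X}\|\U\|^{2}_{\mathbb{Y}}$ to control the forcing by two Young splittings yielding $\|\g\|^{p}_{\mathbb{Y}'}$ (the paper's \eqref{LM4_1}), noise bounds with $\varepsilon_0$ rescaled through $\widetilde{\theta}\lambda$ (the paper's \eqref{LM5_1}--\eqref{LM6_1}), and the same Gronwall-plus-$\mathfrak{D}_2$ conclusion. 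No gaps.
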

	\begin{proof}
		\iffalse 
		By It\^o formula, we get
		\begin{align}\label{LM1_1}
		\d\|\U(\xi)\|^p_{\X}&=\frac{p}{2}\|\U(\xi)\|^{p-2}_{\X}\bigg(2\langle\F(\U(\xi))+\g(\xi),\U(\xi)\rangle+\varepsilon^2\|\upsigma(\xi,\U(\xi))\|^2_{\mathcal{L}_2(\X_0,\X)}\bigg)\d\xi \nonumber\\&\quad+ p\varepsilon\|\U(\xi)\|^{p-2}_{\X}\big(\U(\xi),\upsigma(\xi,\U(\xi))\d\W(\xi)\big)+\frac{p(p-2)\varepsilon^2}{2} \|\U(\xi)\|^{p-4}_{\X}\|\upsigma^*(\xi,\U(\xi))\U(\xi)\|^2_{\X_0}\d\xi,
		\end{align}
		where $\upsigma^*$ is the adjoint operator of $\upsigma$. Taking the expectation of \eqref{LM1_1}, we obtain, for $\xi\geq s-t,$
		\begin{align}\label{LM2_1}
		&\quad	\frac{\d}{\d\xi}\E\left[\|\mathrm{U}(\xi,s-t,\mathrm{U}_0)\|^p_{\X}\right]\nonumber\\&= \E\bigg[p\|\mathrm{U}(\xi,s-t,\mathrm{U}_0)\|^{p-2}_{\X}\langle\F(\mathrm{U}(\xi,s-t,\mathrm{U}_0))+\g(\xi),\mathrm{U}(\xi,s-t,\mathrm{U}_0)\rangle\nonumber\\&\quad+\frac{p\varepsilon^2}{2}\|\mathrm{U}(\xi,s-t,\mathrm{U}_0)\|^{p-2}_{\X}\|\upsigma(\xi,\mathrm{U}(\xi,s-t,\mathrm{U}_0))\|^2_{\mathcal{L}_2(\X_0,\X)}\nonumber\\&\quad+\frac{p(p-2)\varepsilon^2}{2}\|\mathrm{U}(\xi,s-t,\mathrm{U}_0)\|^{p-4}_{\X}\|\upsigma^*(\xi,\mathrm{U}(\xi,s-t,\mathrm{U}_0))\mathrm{U}(\xi,s-t,\mathrm{U}_0)\|^2_{\X_0}\bigg].
		\end{align}
		\fi 
		By the condition (F.3) of Hypothesis \ref{LM_F_H}, \eqref{poinLM} and Young's inequality, we get
		\begin{align}\label{LM3_1}
		&	p\|\U(\xi)\|^{p-2}_{\X}\langle\F(\U(\xi)),\U(\xi)\rangle\nonumber\\&\leq-p\theta_1\|\U(\xi)\|^{p-2}_{\X}\|\U(\xi)\|^{2}_{\mathbb{Y}}+p(\theta_2+\theta_3)\|\U(\xi)\|^{p}_{\X} +2\theta_3\bigg(\frac{p-2}{p}\bigg)^{\frac{p-2}{2}}\nonumber\\&\leq-p\left(\theta_1-\frac{\theta_2+\theta_3}{\lambda}\right)\|\U(\xi)\|^{p-2}_{\X}\|\U(\xi)\|^{2}_{\mathbb{Y}} +2\theta_3\bigg(\frac{p-2}{p}\bigg)^{\frac{p-2}{2}}\nonumber\\&\leq-\left(p-\frac{1}{2}\right)\lambda\widetilde{\theta}\|\U(\xi)\|^{p}_{\X}- \frac{\widetilde{\theta}}{2}\|\U(\xi)\|^{p-2}_{\X}\|\U(\xi)\|^{2}_{\mathbb{Y}} +2\theta_3\bigg(\frac{p-2}{p}\bigg)^{\frac{p-2}{2}},
		\end{align}
		for a.e. $\xi\geq s-t.$ 	A calculation similar to \eqref{LM4} gives 
		\begin{align}\label{LM4_1}
		p\|\U(\xi)\|^{p-2}_{\X}\langle \g(\xi),\U(\xi)\rangle&\leq p\|\U(\xi)\|^{p-2}_{\X}\|\g(\xi)\|_{\mathbb{Y}'}\|\U(\xi)\|_{\mathbb{Y}}\nonumber\\&\leq \frac{\widetilde{\theta}}{2}\|\U(\xi)\|^{p-2}_{\X}\|\U(\xi)\|^{2}_{\mathbb{Y}}+\frac{\widetilde{\theta}\lambda}{2}\|\U(\xi)\|^p_{\X}+\bigg[\frac{p-2}{\widetilde{\theta}\lambda}\bigg]^{\frac{p-2}{2}}\bigg[\frac{p}{\widetilde{\theta}}\bigg]^{\frac{p}{2}}\|\g(\xi)\|^{p}_{\mathbb{Y}'}.
		\end{align}
		Choosing $\varepsilon_0=\sqrt{\frac{(p-2)\widetilde{\theta}\lambda}{p(p-1)K}}$, using Hypothesis \ref{LMsigmaH} (M.2) and Young's inequality,  for $0<\varepsilon\leq\varepsilon_0,$ we obtain
		\begin{align}\label{LM5_1}
		\frac{p\varepsilon^2}{2}\|\U(\xi)\|^{p-2}_{\X}\|\upsigma(\xi,\U(\xi))\|^2_{\mathcal{L}_2(\X_0,\X)}&\leq \frac{\widetilde{\theta}\lambda}{p-1}\bigg(\frac{p-2}{p}\bigg)^{\frac{p}{2}} +\frac{(p-2)\widetilde{\theta}\lambda}{p-1}\|\U(\xi)\|^{p}_{\X}.
		\end{align}
		Using \eqref{LM5_1},  for all $0<\varepsilon\leq\varepsilon_0,$ we find
		\begin{align}\label{LM6_1}
		\frac{p(p-2)\varepsilon^2}{2}\|\U(\xi)\|^{p-4}_{\X}\|\upsigma^*(\xi,\U(\xi))\U(\xi)\|^2_{\X_0}\leq\frac{(p-2) \widetilde{\theta}\lambda}{p-1}\bigg(\frac{p-2}{p}\bigg)^{\frac{p}{2}} +\frac{(p-2)^2\widetilde{\theta}\lambda}{p-1}	\|\U(\xi)\|^{p}_{\X}.
		\end{align}
		Combining \eqref{LM3_1}-\eqref{LM6_1} and then substituting it in \eqref{LM2}, we obtain
		\begin{align}\label{LM7_1}
		&	\frac{\d}{\d\xi}\E\left[\|\mathrm{U}(\xi,s-t,\mathrm{U}_0)\|^p_{\X}\right]+\widetilde{\theta}\lambda\E\|\mathrm{U}(\xi,s-t,\mathrm{U}_0)\|^p_{\X} \leq C_{p,\lambda,\theta_1,\theta_2,\theta_3}
		+\widetilde{C}_{p,\lambda,\theta_1,\theta_2,\theta_3}\|\g(\xi)\|^{p}_{\mathbb{Y}'},
		\end{align}
		for all $0<\varepsilon\leq\varepsilon_0$ and a.e. $\xi\geq s-t$.	Multiplying \eqref{LM7_1} by $e^{\widetilde{\theta}\lambda\xi}$ and then integrating on $(s-t,s)$ with $t\geq0$, we deduce that 
		\begin{align}\label{LM8_1}
		\E\left[\|\mathrm{U}(s,s-t,\mathrm{U}_0)\|^p_{\X}\right]& \leq e^{-\widetilde{\theta}\lambda t}\E\left[\|\mathrm{U}_0\|^p_{\X}\right] + C_{p,\lambda,\theta_1,\theta_2,\theta_3}\nonumber\\&\quad+\widetilde{C}_{p,\lambda,\theta_1,\theta_2,\theta_3}\left[e^{-\widetilde{\theta}\lambda s}\int_{-\infty}^{s}e^{\widetilde{\theta}\lambda \tau}\|\g(\tau)\|^{p}_{\mathbb{Y}'}\d \tau\right]. 
		\end{align}
		Since $\mathrm{U}_0\in \mathcal{B}_2(s-t)$ and $\mathcal{B}_2\in \mathfrak{D}_2$, we get
		\begin{align}\label{LM9_1}
		e^{-\widetilde{\theta}\lambda t} \E\left[\|\mathrm{U}_0\|^p_{\X}\right]&=e^{-\widetilde{\theta}\lambda s}e^{\widetilde{\theta}\lambda(s-t)} \E\left[\|\mathrm{U}_0\|^p_{\X}\right]\leq e^{-\widetilde{\theta}\lambda s}e^{\widetilde{\theta}\lambda(s-t)} \|\mathcal{B}_2(s-t)\|^p_{\mathrm{L}^p(\Omega,\mathscr{F}_{s-t};\X)}\nonumber\\&\to0 \text{ as } t\to \infty.
		\end{align}
		Therefore, there exists $T=T(s,\mathcal{B}_2)>0$ such that  $e^{-\widetilde{\theta}\lambda t} \E\left[\|\mathrm{U}_0\|^p_{\X}\right]\leq 1,$ for all $t\geq T$. By \eqref{LM8_1}, we finally obtain, for $t\geq T$,
		\begin{align}\label{LM10_1}
		\E\left[\|\mathrm{U}(s,s-t,\mathrm{U}_0)\|^p_{\X}\right] \leq 1+ C_{p,\lambda,\theta_1,\theta_2,\theta_3}+\widetilde{C}_{p,\lambda,\theta_1,\theta_2,\theta_3}\left[e^{-\widetilde{\theta}\lambda s}\int_{-\infty}^{s}e^{\widetilde{\theta}\lambda \tau}\|\g(\tau)\|^{p}_{\mathbb{Y}'}\d \tau\right],
		\end{align}which completes the proof.
	\end{proof}
	Next, one can establish the existence of weak $\mathfrak{D}_2$-pullback mean random attractors for $\Phi$ when $\gamma=2$ and $\widetilde{\theta}>0$ similarly as in the proof of Theorem \ref{WPMRA3}.
	\begin{theorem}\label{WPMRA4}
		Suppose that all the conditions of Hypothesis \ref{LM_F_H}-\ref{LMf_H} are satisfied with $\gamma=2$ and $\widetilde{\theta}:=\theta_1-\frac{\theta_2+\theta_3}{\lambda}>0$ or $\frac{\theta_2+\theta_3}{\lambda}<\theta_1$. Then, there exists $\varepsilon_0>0$ such that for every $0<\varepsilon\leq\varepsilon_0$, the mean random dynamical system $\Phi$ for the system \eqref{LM_EQ} has a unique weak $\mathfrak{D}_2$-pullback mean random attractor $\widetilde{\mathcal{G}}=\{\widetilde{\mathcal{G}}(s):s\in\R\}\in \mathfrak{D}_2$ in $\mathrm{L}^p(\Omega,\mathscr{F};\X)$ over $(\Omega,\mathscr{F},\{\mathscr{F}_t\}_{t\in\R},\mathbb{P}).$  
	\end{theorem}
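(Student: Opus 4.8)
The plan is to mirror exactly the three-step argument already used for Theorem \ref{WPMRA3}, now feeding in the uniform tail estimate of Lemma \ref{absorb_LM_1} in place of Lemma \ref{absorb_LM}. Since the only genuinely analytic work — the uniform-in-time $\mathrm{L}^p(\Omega;\X)$ bound on the solution, with the coercivity/forcing balance governed by $\widetilde{\theta}=\theta_1-\frac{\theta_2+\theta_3}{\lambda}>0$ — is already packaged in Lemma \ref{absorb_LM_1}, all that remains is to convert that bound into a weakly compact $\mathfrak{D}_2$-pullback absorbing family and then invoke the abstract existence-and-uniqueness result Theorem \ref{Main-T}.

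First I would, for each $s\in\R$, define the candidate absorbing ball
\begin{align*}
\widetilde{\mathcal{J}}(s):=\left\{\mathrm{U}\in\mathrm{L}^p(\Omega,\mathscr{F}_s;\X):\E\left[\|\mathrm{U}\|^p_{\X}\right]\leq\widetilde{I}(s)\right\},
\end{align*}
where $\widetilde{I}(s)$ is the right-hand side of \eqref{LM_1}, namely
\begin{align*}
\widetilde{I}(s)=1+C_{p,\lambda,\theta_1,\theta_2,\theta_3}+\widetilde{C}_{p,\lambda,\theta_1,\theta_2,\theta_3}\left[e^{-\widetilde{\theta}\lambda s}\int_{-\infty}^{s}e^{\widetilde{\theta}\lambda\tau}\|\g(\tau)\|^{p}_{\mathbb{Y}'}\d\tau\right].
\end{align*}
Exactly as in the proof of Theorem \ref{WPMRA3}, the set $\widetilde{\mathcal{J}}(s)$ is a bounded, closed and convex subset of the reflexive Banach space $\mathrm{L}^p(\Omega,\mathscr{F}_s;\X)$ (reflexive because $\X$ is a Hilbert space and $p\in(1,\infty)$), and is therefore weakly compact.

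Next I would check that $\widetilde{\mathcal{J}}=\{\widetilde{\mathcal{J}}(s):s\in\R\}$ satisfies the growth condition \eqref{classM_1}. Since $\|\widetilde{\mathcal{J}}(s)\|^p_{\mathrm{L}^p(\Omega,\mathscr{F}_s;\X)}=\widetilde{I}(s)$, one computes
\begin{align*}
\lim_{s\to-\infty}e^{\widetilde{\theta}\lambda s}\widetilde{I}(s)=\lim_{s\to-\infty}\left[\left(1+C_{p,\lambda,\theta_1,\theta_2,\theta_3}\right)e^{\widetilde{\theta}\lambda s}+\widetilde{C}_{p,\lambda,\theta_1,\theta_2,\theta_3}\int_{-\infty}^{s}e^{\widetilde{\theta}\lambda\tau}\|\g(\tau)\|^{p}_{\mathbb{Y}'}\d\tau\right]=0,
\end{align*}
where the first term vanishes because $\widetilde{\theta}\lambda>0$ and the second vanishes by \eqref{LMf_H2_1}. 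Hence $\widetilde{\mathcal{J}}\in\mathfrak{D}_2$, and Lemma \ref{absorb_LM_1} then shows that $\widetilde{\mathcal{J}}$ is a weakly compact $\mathfrak{D}_2$-pullback absorbing set for $\Phi$. Applying Theorem \ref{Main-T}, with the abstract reflexive space taken to be $\X$, yields the existence and uniqueness of the weak $\mathfrak{D}_2$-pullback mean random attractor $\widetilde{\mathcal{G}}\in\mathfrak{D}_2$.

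I do not expect a genuine obstacle at the level of this theorem, since every nontrivial estimate has been relegated to Lemma \ref{absorb_LM_1}, whose derivation differs from that of Lemma \ref{absorb_LM} only in the coercivity/forcing steps \eqref{LM3_1}-\eqref{LM7_1}. The one point deserving mild care is matching the exponential decay rate: the Gr\"onwall-type integration in \eqref{LM8_1} produces precisely the factor $e^{\widetilde{\theta}\lambda s}$ that defines the collection $\mathfrak{D}_2$, and it is exactly the integrability hypothesis \eqref{LMf_H1_1}-\eqref{LMf_H2_1} (rather than \eqref{LMf_H1}-\eqref{LMf_H2}) that forces the forcing contribution in $\widetilde{I}(s)$ to decay at this same rate, so that the absorbing family lands in the correct collection and the minimality clause of Definition \ref{def2.9} is meaningful.
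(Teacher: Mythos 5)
Your proposal is correct and follows essentially the same route as the paper: the paper itself handles Theorem \ref{WPMRA4} by repeating the argument of Theorem \ref{WPMRA3} verbatim with Lemma \ref{absorb_LM_1} in place of Lemma \ref{absorb_LM}, defining the same absorbing family $\widetilde{\mathcal{J}}(s)$ with radius $\widetilde{I}(s)$, verifying weak compactness and membership in $\mathfrak{D}_2$ via \eqref{LMf_H2_1}, and concluding with Theorem \ref{Main-T}. Your closing remark about matching the decay rate $e^{\widetilde{\theta}\lambda s}$ to the collection $\mathfrak{D}_2$ is exactly the right point of care and is consistent with the paper's treatment.
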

	\subsection{Applications}
	In this subsection, we provide some examples which comes under the functional framework discussed in Theorems \ref{WPMRA3} and \ref{WPMRA4}. All the following examples satisfies conditions (F.1)-(F.4) of Hypothesis \ref{LM_F_H}. The interested readers are referred to see section 6, \cite{GLS} for more details. 
	\begin{itemize}
		\item [1.] Stochastic Burgers type and reaction diffusion (semilinear stochastic) equations (Example 6.1, \cite{GLS}).
		\item[2.] Stochastic 2D Navier-Stokes equations (Example 6.3, \cite{GLS}) and hydrodynamic models like  stochastic magnetohydrodynamic (MHD) equations (Subsection 2.1.2, \cite{GLS}),  stochastic Boussinesq model for the B\'enard convection (Subsection 2.1.3, \cite{GLS}),  stochastic 2D magnetic B\'enard problem, stochastic 3D Leray-$\alpha$ model (Example 6.5, \cite{GLS}), stochastic shell model of turbulence (Subsection 2.1.6, \cite{GLS}).
		\item [3.] Stochastic power law fluids (Example 6.8, \cite{GLS}).
		\item [4.] Stochastic Ladyzhenskaya model (Example 6.9, \cite{GLS}).
		\item [5.] SPDE with monotone coefficients (Subsection 6.8, \cite{GLS}).
	\end{itemize}
	
	\medskip\noindent
	{\bf Acknowledgments:}    The first author would like to thank the Council of Scientific $\&$ Industrial Research (CSIR), India for financial assistance (File No. 09/143(0938)/2019-EMR-I).  M. T. Mohan would  like to thank the Department of Science and Technology (DST), Govt of India for Innovation in Science Pursuit for Inspired Research (INSPIRE) Faculty Award (IFA17-MA110).

\end{document}